\newcolumntype{C}[1]{>{\centering\arraybackslash}m{#1}}
\newcolumntype{L}[1]{>{\arraybackslash}m{#1}}
\newcolumntype{D}[1]{>{\centering\arraybackslash}m{#1}}
\newcolumntype{E}[1]{>{\arraybackslash}m{#1}}
\theoremstyle{definition}
\newtheorem{theorem}{Theorem}[section]
\newtheorem{corollary}[theorem]{Corollary}
\newtheorem{lemma}[theorem]{Lemma}
\newtheorem{definition}[theorem]{Definition}
\newtheorem{proposition}[theorem]{Proposition}
\newtheorem{remark}[theorem]{Remark}
\newtheorem{notation}[theorem]{Notation}
\newtheorem{example}[theorem]{Example}
\def\F{\mathbb{F}}
\def\Fq{\F_q}
\def\Fqn{\Fq^n}
\def\Fqm{\F_{q^m}}
\def\Fqmn{\Fqm^n}
\def\rk{\textup{rk}}
\def\mP{\mathscr{P}}
\def\mL{\mathscr{L}}
\def\At{\textup{At}}
\def\Z{\mathbb{Z}}
\def\cov{\mathrel{<\kern-.6em\raise.015ex\hbox{$\cdot$}}}
\newcommand{\qbin}[2]{\begin{bmatrix}{#1}\\ {#2}\end{bmatrix}_q}
\newcommand{\qmbin}[2]{\begin{bmatrix}{#1}\\ {#2}\end{bmatrix}_{q^m}}
\newcommand{\Qbin}[2]{\begin{bmatrix}{#1}\\ {#2}\end{bmatrix}_Q}
\newcommand{\qbinomial}[3]{\begin{bmatrix}{#1}\\ {#2}\end{bmatrix}_{#3}}
\newcommand{\bbin}[3]{\begin{bmatrix}{#1}\\ {#2}\end{bmatrix}_{q^{#3}}}
\def\<{\left<}
\def\>{\right>}
\def\crit{\textup{crit}}
\def\H{\textup{H}}
\def\F{\mathbb{F}}
\def\mH{\mathscr{H}}
\def\wt{\textup{wt}}
\newcolumntype{C}[1]{>{\centering\arraybackslash}m{#1}}
\newcolumntype{L}[1]{>{\arraybackslash}m{#1}}
\definecolor{myblue}{HTML}{0ABAB5}
\def\cb{\cellcolor{myblue!15}}
\title{\textbf{Rank-Metric Lattices}}
\author[1]{Giuseppe Cotardo\thanks{G. C. is supported by the Irish Research Council, grant n. GOIPG/2018/2534.}}
\affil[1]{School of Mathematics and Statistics, University College Dublin, Ireland}
\author[2]{Alberto Ravagnani\thanks{A. R. is supported by the Dutch Research Council through grants VI.Vidi.203.045, OCENW.KLEIN.539, 
and by the Royal Academy of Arts and Sciences of the Netherlands.}}
\affil[2]{Department of Mathematics and Computer Science, Eindhoven University of Technology, the Netherlands}
\date{}
\newcommand{\subjclass}[2][1991]{%
  \let\@oldtitle\@title%
  \gdef\@title{\@oldtitle\footnotetext{#1 \emph{Mathematics subject classification.} #2}}%
}
\newcommand{\keywords}[1]{%
  \let\@@oldtitle\@title%
  \gdef\@title{\@@oldtitle\footnotetext{\emph{Key words and phrases.} #1.}}%
}
\begin{document}
\maketitle

\begin{abstract}
We introduce the class of rank-metric geometric lattices and initiate the study of their structural properties.
Rank-metric lattices  can be seen as the $q$-analogues of higher-weight Dowling lattices, defined by Dowling himself in 1971. We fully characterize the
supersolvable
rank-metric lattices and compute their characteristic polynomials.
We then concentrate on the smallest rank-metric lattice whose characteristic polynomial we cannot compute, and provide a formula for it under a polynomiality assumption on its Whitney numbers of the first kind.
The proof relies on computational results and on the theory of vector rank-metric codes, which we review in this paper
from the perspective of rank-metric lattices. More precisely, 
we introduce the notion of lattice-rank weights of a rank-metric code and investigate their properties as combinatorial invariants and as code distinguishers for inequivalent codes.
\end{abstract}

\bigskip

\section*{Introduction}

This paper initiates the study of \textit{rank-metric lattices} and of their connection with the invariant theory of rank-metric codes.
We define a rank-metric lattice as the combinatorial subgeometry of $\smash{\F_{q^m}^n}$ whose atoms are the projective points corresponding to vectors having small rank (defined later).

Rank-metric lattices can be seen as the $q$-analogues of a family of lattices that have been studied since the seventies, namely higher-weight Dowling lattices. These were introduced by Dowling himself~\cite{dowling1971codes,dowling1973q} in connection with the main problem of classical coding theory, i.e., computing the largest dimension of an error-correcting code in the Hamming metric with given length and correction capability. This question connects with the famous conjecture by Segre on the largest size of an arc in a projective space over a finite field~\cite{segre1955curve}.
Higher-weight Dowling lattices have been studied by various authors~\cite{dowling1971codes,dowling1973q,bonin1993automorphism,bonin1993modular,kung1996critical,brini1982some,games1983packing,zaslavsky1987mobius,ravagnani2019whitney,britz2005extensions}, yet the techniques for computing their characteristic polynomial are to date unknown.

Computing the characteristic polynomials of rank-metric lattices is equivalent to counting/estimating the number of rank-metric codes with given length and correction capability, a line of research that is becoming increasingly important in contemporary coding theory; see e.g.~\cite{antrobus2019maximal,byrne2020partition,gruica2020common,neri2018genericity}.

The first member of the family of rank-metric lattices is precisely the $q$-analogue of the first member of the family of higher-weight Dowling lattices. Indeed, the former is (isomorphic to) the lattice of subspaces of $\F_q^n$, and the latter is the lattice of subsets of $\{1,\ldots,n\}$. It is natural to ask to which extent this analogy holds.
In this paper, we show that divergences in the behavior of $q$-analogous lattices already appear when examining the second family members. For example, while the second 
higher-Weight Dowling lattices is supersolvable (it is known as the \textit{$q$-analogue of the partition lattice}),
the second rank-metric lattice is not.
In fact, in Section~\ref{sec:supersolvable} of this paper we provide a complete characterization of the modular elements of rank-metric lattices, and determine all the supersolvable ones. The proof techniques follow to some extent Bonin's approach~\cite{bonin1993modular}, but there are important technical differences in the arguments, which almost never translate from
higher-weight Dowling lattices to rank-metric lattices (we will explain why).

In Section~\ref{secPAP:5} we compute the characteristic polynomial of some rank-metric lattices, and establish general properties of their roots by combining the characterization of modular elements mentioned above with Stanley's modular factorization theorem~\cite{stanley1971modular}.
The smallest rank-metric lattices whose characteristic polynomial we cannot compute is the one generated by vectors in $\smash{\F_{q^4}^4}$ of rank at most $2$. Using a computational approach based on coding theory, in Section~\ref{sec:specific_chi} we provide a closed formula for its characteristic polynomial under the assumption that its second Whitney number is a polynomial in $q$. The formula we obtain is compatible with all the other properties of rank-metric lattices we establish throughout the paper.

In Section~\ref{sec:last} we consider applications of rank-metric lattices to the invariant theory of (vector) rank-metric codes. We show how rank-metric lattices can be used to define new distinguishers for this class of codes. We start from the observation that the elements of the  first rank-metric lattice are exactly the \textit{optimal rank-metric anticodes} defined in \cite{ravagnani2016generalized}
in connection with a class of invariants for (vector) rank-metric codes, namely the \textit{generalized vector rank weights}; see~\cite{kurihara2015relative}. In this paper, we extend the definition of
\cite{ravagnani2016generalized}
to ``higher weight'' rank-metric lattices.
 We then classify some families of rank-metric codes that show interesting extremality properties with respect to these notions
 and study the concepts 
 \textit{lattice weight distribution} and \textit{lattice binomial moments} in this context.

\section{Posets and Lattices}

In this section we recall some definitions and results on posets and lattices that we will need throughout the paper. A standard reference for this part is \cite[Chapter~3]{stanley2011enumerative}. The reader who is already familiar with lattice theory can skip this section.

In the sequel, $(\mP,\leq)$ denotes a finite poset. We say that $(\mP,\leq)$ is \textbf{trivial} if the cardinality of $\mP$ is $1$.
We sometimes abuse notation
and simply write $\mP$ for 
$(\mP,\leq)$. Moreover, for  $s,t\in\mP$ we write $s<t$ for $s\leq t$ and $s\neq t$.

\begin{definition}
	 Elements $s,t \in \mP$ 
	 are \textbf{comparable} if $s\leq t$ or $t\leq s$. We say that $t$ \textbf{covers} $s$ (or that $s$ \textbf{is covered} by $t$)  if $s<t$ and there is no element $u \in \mP$ such that $s<u<t$.
	 The notation is $s\cov t$.
\end{definition}

\begin{definition}
The \textbf{join} of $s,t \in \mP$, should it exist, is 
the element $u \in \mP$ that satisfies $s \le u$, $t \le u$,
and $u \le v$ for all $v \in \mP$ with $s \le v$ and $t \le v$.
Dually, the \textbf{meet} of $s,t \in \mP$, should it exist, is 
the element $u \in \mP$ that satisfies $u \le s$, $u \le t$,
and $v \le u$ for all $v \in \mP$ with $v \le s$ and $v \le t$. The poset $\mP$ has \textbf{minimum element} $0 \in \mP$
    if $0 \le s$ for all $s \in \mP$. Dually, it has \textbf{maximum element} $1 \in \mP$ if $s \le 1$ for all $s \in \mP$.
\end{definition}

\begin{notation}
    It is easy to check that the join (resp., the meet) of $s,t \in \mP$, should it exist,
    is unique and we denote it by $s \vee t$ (resp., by $s \wedge t$).
    The minimum (resp., the maximum) element of $\mP$, should it exist,
is unique and it is denoted by $0_{\mP}$ (resp., by $1_{\mP}$), or by $0$ (resp., $1$) if the poset is clear from context.
\end{notation}

\begin{definition}
\label{def:mobius}
	The \textbf{M\"obius function} of $\mP$ is defined recursively by 
	$\mu_\mP(s,t)=1$ if $s=t$,
	$\mu_\mP(s,t)=-\sum_{s \le u <t} \mu_\mP(s,u)$ if $s<t$, and
	$\mu_\mP(s,t)=0$ otherwise.
We sometimes  write $\mu_\mP(t)$ for $\mu_{\mP}(0,t)$, when $\mP$ has minimum element $0$.
\end{definition}

The following result is a cornerstone of combinatorics.

\begin{proposition}[M\"obius Inversion Formula] \label{prop:mobinv}
	Let $\mathbb{K}$ be a field and let $f:\mP\longrightarrow\mathbb{K}$ be a function. Define $g:\mP\longrightarrow\mathbb{K}$
	by $g(t)=\sum_{s\leq t}f(s)$ for all $t \in \mP$.
	Then
	\begin{equation*}
	f(t)=\sum_{s\leq t}\mu_\mP(s,t)\, g(s) \quad \mbox{for all $t \in \mP$}.
	\end{equation*}
\end{proposition}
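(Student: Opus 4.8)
The plan is to verify the claimed identity by substituting the definition of $g$ into the right-hand side and then interchanging the order of summation. Writing out $g(s)=\sum_{r\leq s}f(r)$, I obtain
\[
\sum_{s\leq t}\mu_\mP(s,t)\,g(s)=\sum_{s\leq t}\mu_\mP(s,t)\sum_{r\leq s}f(r)=\sum_{r\leq t}f(r)\Bigg(\sum_{r\leq s\leq t}\mu_\mP(s,t)\Bigg),
\]
where the last equality merely reorganizes the sum over all pairs $(r,s)$ with $r\leq s\leq t$. The statement therefore reduces to the orthogonality relation
\[
\sum_{r\leq s\leq t}\mu_\mP(s,t)=\begin{cases}1 & \text{if } r=t,\\ 0 & \text{if } r<t,\end{cases}
\]
since plugging this in leaves only the term $r=t$, which contributes $f(t)$.

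The main point is that the recursion in Definition~\ref{def:mobius} does not yield this relation directly. Unfolding the recursion instead gives the \emph{primal} identity $\sum_{s\leq u\leq t}\mu_\mP(s,u)=\delta_{s,t}$ (equal to $1$ if $s=t$ and $0$ otherwise): indeed, for $s<t$ one has $\sum_{s\leq u\leq t}\mu_\mP(s,u)=\mu_\mP(s,t)+\sum_{s\leq u<t}\mu_\mP(s,u)=0$ by the defining formula. In this identity the \emph{first} argument of $\mu_\mP$ is held fixed while the second varies, whereas the relation I need has the \emph{second} argument fixed. I expect justifying this dual orthogonality to be the only genuine obstacle.

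To overcome it, I would pass to a matrix formulation. Fix a linear extension $x_1,\dots,x_N$ of $\mP$ (so that $x_i\leq x_j$ forces $i\leq j$), and let $Z,M$ be the $N\times N$ matrices over $\mathbb{K}$ with entries $Z_{ij}=1$ if $x_i\leq x_j$ and $0$ otherwise, and $M_{ij}=\mu_\mP(x_i,x_j)$ (with $M_{ij}=0$ when $x_i\not\leq x_j$). Both matrices are upper triangular, and $Z$ has $1$'s on the diagonal, hence $Z$ is invertible over $\mathbb{K}$. The primal identity reads exactly $(MZ)_{ik}=\sum_{x_i\leq x_j\leq x_k}\mu_\mP(x_i,x_j)=\delta_{ik}$, i.e.\ $MZ=I$. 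Since $Z$ is invertible, a left inverse is automatically a two-sided inverse, so $ZM=I$ as well; reading off $(ZM)_{ik}=\sum_{x_i\leq x_j\leq x_k}\mu_\mP(x_j,x_k)=\delta_{ik}$ gives precisely the dual orthogonality displayed above. Combining this with the interchange of summation completes the proof.
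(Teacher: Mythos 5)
Your proof is correct and complete. Note that the paper itself offers no proof of this proposition: it is stated as a standard fact with a reference to Stanley's book, so there is no argument of the paper's to compare against. Your write-up follows the classical incidence-algebra route, and you correctly isolated the one genuine subtlety that blind attempts usually gloss over: the recursion in Definition~\ref{def:mobius} fixes the \emph{first} argument of $\mu_\mP$ and only yields $\sum_{s\le u\le t}\mu_\mP(s,u)=\delta_{s,t}$, whereas the inversion formula as stated needs the dual relation $\sum_{r\le s\le t}\mu_\mP(s,t)=\delta_{r,t}$ with the second argument fixed. Your derivation of the dual relation via $MZ=I$ for unitriangular $Z$ (so that the left inverse is two-sided and $ZM=I$) is sound; one could remark that the identity $MZ=I$ already holds over $\Z$ and unitriangular integer matrices are invertible over $\Z$, so the orthogonality relations hold integrally and then transfer to any field $\mathbb{K}$ regardless of characteristic, but your argument over $\mathbb{K}$ is equally valid. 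The interchange of summation over the finite set of pairs $r\le s\le t$ is unproblematic.
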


\textit{Chains} play a crucial role in determining the structure of a poset. They are defined as follows.

\begin{definition}
A \textbf{chain} in $\mP$ is a non-empty subset $C \subseteq \mP$ such that every $s,t \in C$ are comparable. We say that $C$ is \textbf{maximal} if there is no chain in $\mP$ that strictly contains $C$.
We say that $\mP$ is \textbf{graded} if all maximal chains in $\mP$ have the same cardinality.
\end{definition}

We will also
need the following concepts.

\begin{definition} \label{def:modular}
Suppose that $\mP$ is graded and with minimum element $0$. The \textbf{rank function} $\rho_\mP: \mP \to \Z$ of $\mP$ is uniquely defined
by $\rho_\mP(0)=0$ and $\rho_\mP(t)=\rho_\mP(s)+1$ whenever $s,t \in \mP$ and $s \cov t$. In this case we call $\rho_\mP(s)$ the \textbf{rank} of $s$ and the elements $s \in \mP$ with $\rho_\mP(s)=1$ are called \textbf{atoms} of $\mP$. We denote the set of atoms of $\mP$ by $\At(\mP) \subseteq \mP$. We call
$\mP$ \textbf{semimodular} if it is graded and 
\begin{equation}
    \label{eq:semimodrel}
    \rho_\mP(s\wedge t)+\rho_\mP(s\vee t)\leq \rho_\mP(s)+\rho_\mP(t) \quad \mbox{for all $s,t \in \mP$.}
\end{equation}
An element $t \in \mP$ is called \textbf{modular} if equality holds in 
\eqref{eq:semimodrel} for all $s \in \mP$.
\end{definition}

In this paper, we focus on the following special family of posets, called \textit{lattices}. Standard examples of posets (such as the poset of subsets of a finite set and the poset of linear subspaces of a space over a finite field) are in fact lattices.

\begin{definition}
	A \textbf{finite lattice} $(\mL, \le)$ is
	a finite poset where every $s,t \in \mL$ have a join and a meet.
	In this case, join and meet can be seen as commutative and associative operations
	$\vee, \wedge : \mL \times \mL \to \mL$.
	In particular, the \textbf{join} (resp., the \textbf{meet}) of a non-empty subset $S \subseteq \mL$ is well-defined as the join (resp., the meet) of its elements and denoted by~$\vee S$ (resp., by~$\wedge S$). Furthermore, $\mL$ has a minimum and maximum element ($0_\mL$ and~$1_\mL$, resp.). Finally, if $\mL$ is graded then the \textbf{rank} of $\mL$ is $\rk(\mL)=\rho_\mL(1_\mL)$.
\end{definition}

One of the most studied combinatorial invariants of a lattice is its characteristic polynomial, defined as follows.

\begin{definition}
    Let $\mL$ be a finite graded lattice with rank function $\rho$ and M\"obius function~$\mu$. The \textbf{characteristic polynomial} of $\mL$ is the element of $\Z[\lambda]$ defined as
    \begin{equation*}
        \chi(\mL;\lambda):=\sum_{s\in\mL}\mu(s)\lambda^{\rk(\mL)-\rho(s)}=\sum_{i=0}^{\rk(\mL)}w_i(\mL)\lambda^{\rk(\mL)-i},
    \end{equation*}
    where
    \begin{equation*}
        w_i(\mL)=\sum_{\substack{s\in\mL\\\rho(s)=i}}\mu(s)
    \end{equation*}
    is the $i$-\textbf{th Whitney number of the first kind} of $\mL$. The  $i$-\textbf{th Whitney number of the second kind} of $\mL$, denoted by $W_i(\mL)$, is the number of elements of $\mL$ of rank $i$.
\end{definition}

Computing the characteristic polynomial of a lattice is a standard problem in combinatorics.
For some lattices, this problem is more ``approachable'' than for others.

\begin{definition}
    A \textbf{geometric lattice} is a finite lattice $(\mL, \le)$ which is semimodular and \textbf{atomistic}, i.e., every element $s \in \mL$ is the join of a set of atoms of $\mL$.
\end{definition}

In~\cite{stanley1971modular,stanley1972supersolvable}, Stanley identified a class of geometric lattices whose characteristic polynomial has a precise combinatorial significance. They can be defined as follows.

\begin{definition}
 A geometric lattice $\mL$ is called \textbf{supersolvable} if $\mL$ has  a maximal chain made of modular elements.
\end{definition}

The following result is known as 
Stanley's modular factorization Theorem.
It gives a sufficient condition for the characteristic polynomial of a geometric lattices to have a factor.

\begin{theorem}[see {\cite[Theorem~2]{stanley1971modular}}]
\label{thm:stanley}
    Let $\mL$ be a geometric lattice of rank $n$ and let $t\in\mL$ be a modular element. We have
    \begin{equation*}
         \chi(\mL;\lambda)=\chi([0,t];\lambda)\sum_{\substack{x\in \mL\\x\wedge t=0}}\mu_\mL(x)\lambda^{n-\rho_{\mL}(x)-\rho_{\mL}(t)}.
    \end{equation*}
\end{theorem}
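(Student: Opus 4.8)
The plan is to expand the right-hand side as a product of two polynomials in $\lambda$ and match it against $\chi(\mL;\lambda)=\sum_{w\in\mL}\mu_\mL(w)\lambda^{n-\rho(w)}$ coefficient by coefficient, where $\rho=\rho_\mL$. Writing $\chi([0,t];\lambda)=\sum_{y\le t}\mu_\mL(y)\lambda^{\rho(t)-\rho(y)}$ and multiplying, the factor $\lambda^{\rho(t)}$ cancels and the product becomes $\sum_{y\le t}\sum_{x\wedge t=0}\mu_\mL(y)\mu_\mL(x)\,\lambda^{n-\rho(y)-\rho(x)}$. Grouping the summands according to the value $w=y\vee x$, the theorem reduces to the family of identities
\begin{equation*}
\mu_\mL(w)=\sum_{\substack{y\le t,\ x\wedge t=0\\ y\vee x=w}}\mu_\mL(y)\,\mu_\mL(x)\qquad\text{for every }w\in\mL,
\end{equation*}
provided every summand indexed by $w$ really carries the exponent $\lambda^{n-\rho(w)}$.

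The enabling structural fact is a rank lemma for modular elements, which I would prove first directly from the defining equality of Definition~\ref{def:modular}. Suppose $x\wedge t=0$ and $y\le t$. Then $x\wedge y\le x\wedge t=0$, so semimodularity gives $\rho(x\vee y)\le\rho(x)+\rho(y)$. For the reverse inequality I apply modularity of $t$ to the element $s=x\vee y$: since $x\vee y\vee t=x\vee t$ and $\rho(x\vee t)=\rho(x)+\rho(t)$ (again by modularity of $t$, using $x\wedge t=0$), the modular equation collapses to $\rho(x\vee y)=\rho(x)+\rho\big((x\vee y)\wedge t\big)\ge\rho(x)+\rho(y)$, the last step using $y\le(x\vee y)\wedge t$. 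Hence $\rho(x\vee y)=\rho(x)+\rho(y)$ and, comparing ranks in the graded interval, $(x\vee y)\wedge t=y$. This lemma does two things at once: it forces $\rho(y)+\rho(x)=\rho(w)$ for every pair with $y\vee x=w$, so the exponents match as required above, and it pins down $y=(x\vee y)\wedge t=w\wedge t$ for every such pair.

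Consequently, fixing $y_0:=w\wedge t$, the identity to prove becomes $\mu_\mL(w)=\mu_\mL(y_0)\sum_{x}\mu_\mL(x)$, the sum running over $x$ with $x\le w$, $x\wedge t=0$ and $x\vee y_0=w$. Because $x\le w$ yields $x\wedge t=x\wedge(w\wedge t)=x\wedge y_0$, these $x$ are exactly the complements of $y_0$ inside the interval $[0,w]$. Since geometric lattices are relatively complemented, at least one such $x$ exists, so every $w$ genuinely arises as some $y\vee x$ and no element of $\mL$ is omitted in the grouping. It then remains to establish the complementation formula
\begin{equation*}
\mu_\mL(w)=\mu_\mL(y_0)\sum_{\substack{x\in[0,w]\\ x\wedge y_0=0,\ x\vee y_0=w}}\mu_\mL(x),
\end{equation*}
where $y_0=w\wedge t$ is a modular element of the geometric interval $[0,w]$, a hereditary property I would verify directly from the rank equation.

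I expect this last complementation formula to be the main obstacle, since it is a genuine Möbius identity rather than bookkeeping. My plan for it is an induction on $\rho(w)$ together with Weisner's theorem (a classical consequence of the Möbius inversion of Proposition~\ref{prop:mobinv}) applied inside $[0,w]$: Weisner gives $\sum_{x\le w,\ x\vee y_0=w}\mu_\mL(x)=0$ over \emph{all} $x$ joining with $y_0$ to $w$, and I would isolate the complement terms by partitioning the remaining $x$ according to $u=x\wedge y_0$ and recognizing each block as a complement problem in the shorter interval $[u,w]$, where modularity of $y_0$ lets the induction close. As the Möbius function of a geometric lattice never vanishes, the factor $\mu_\mL(y_0)$ is handled multiplicatively without any division. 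Assembling the rank lemma, the exponent bookkeeping, and this complementation identity then yields the claimed factorization.
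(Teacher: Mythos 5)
The paper itself gives no proof of this theorem --- it is quoted directly from Stanley --- so your argument can only be judged on its own merits. Your overall architecture is the standard one and most of it is carried out correctly and completely: the rank lemma for a modular $t$ (that $\rho(x\vee y)=\rho(x)+\rho(y)$ and $(x\vee y)\wedge t=y$ whenever $y\le t$ and $x\wedge t=0$) is proved cleanly from the modular equation plus semimodularity; the exponent bookkeeping is right; the identification of the pairs $(y,x)$ with $y\vee x=w$ as exactly $(w\wedge t,\,x)$ with $x$ a complement of $y_0:=w\wedge t$ in $[0,w]$ is correct, as are the existence of such complements and the heredity of modularity of $y_0$ to $[0,w]$. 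The theorem is thereby correctly reduced to the single Möbius identity $\mu(w)=\mu(y_0)\sum_{x}\mu(x)$, summed over complements $x$ of $y_0$ in $[0,w]$.

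That identity is where the genuine gap sits, and the route you sketch for it does not close as described. After applying Weisner's theorem in $[0,w]$ and partitioning the non-complement terms by $u=x\wedge y_0>0$, each block is indeed the set of complements of $y_0$ in the interval $[u,w]$ --- but the quantity you must evaluate there is $\sum_x\mu(\hat0,x)$, whereas an induction hypothesis of the same shape applied to $[u,w]$ only controls $\sum_x\mu(u,x)$. There is no mechanism for converting one into the other, so ``modularity of $y_0$ lets the induction close'' is precisely the missing step rather than a consequence of what precedes it. The standard way to finish is different and uses two ingredients you do not invoke. First, by your own rank lemma every complement $x$ of $y_0$ in $[0,w]$ has rank $\rho(w)-\rho(y_0)$, so distinct complements are incomparable; Crapo's complementation theorem then collapses to $\mu(0,w)=\sum_x\mu(0,x)\,\mu(x,w)$. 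Second, since $y_0$ is a modular element, the map $z\mapsto z\vee x$ is an isomorphism from $[x\wedge y_0,\,y_0]=[0,y_0]$ onto $[x,\,x\vee y_0]=[x,w]$, whence $\mu(x,w)=\mu(0,y_0)$ for every complement $x$, which is exactly the identity you need. Without Crapo's theorem (or an equivalent substitute such as the Möbius-algebra computation), the proof is incomplete at its decisive step.
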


As a corollary of the previous theorem we finally obtain Stanley's decomposition result for the characteristic polynomial of a geometric supersolvable lattice.
The corollary shows that the polynomial splits into linear factors, whose roots have a precise combinatorial significance; see also~\cite[Section~3]{stanley1972supersolvable}.

\begin{corollary}
\label{cor:stanley}
     Let $\mL$ be a supersolvable geometric lattice of rank $n$ and let $0= t_0\cov t_1 \cov \cdots \cov t_n=1$ be a maximal chain of modular elements of $\mL$. We have
     \begin{equation*}
         \chi(\mL;\lambda)=\prod_{i=1}^n(\lambda-|\{a\in\At(\mL)\mid a\leq t_i, \, a\not\leq t_{i-1}\}|).
     \end{equation*}
\end{corollary}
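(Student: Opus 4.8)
The plan is to argue by induction on the rank $n$, using Stanley's modular factorization theorem (Theorem~\ref{thm:stanley}) to peel off one linear factor at each step. The base case $n=0$ is the trivial lattice, where both sides reduce to the empty product $1$. For the inductive step I would apply Theorem~\ref{thm:stanley} with the modular element $t=t_{n-1}$, which is a coatom of $\mL$ since $\rho_\mL(t_{n-1})=n-1$. This expresses $\chi(\mL;\lambda)$ as the product of $\chi([0,t_{n-1}];\lambda)$ and a sum $S(\lambda)=\sum_{x\wedge t_{n-1}=0}\mu_\mL(x)\,\lambda^{1-\rho_\mL(x)}$, and the two goals are to match $\chi([0,t_{n-1}];\lambda)$ with the first $n-1$ factors and $S(\lambda)$ with the last one.

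First I would analyze $\chi([0,t_{n-1}];\lambda)$. The interval $[0,t_{n-1}]$ is again a geometric lattice, and I claim the truncated chain $0=t_0\cov t_1\cov\cdots\cov t_{n-1}$ is a maximal chain of modular elements of it. Maximality and the values of the rank function are clear; modularity is inherited because meets and joins of elements lying below $t_{n-1}$ are computed identically in $\mL$ and in the interval, so the equality defining modularity of each $t_i$ in $\mL$ restricts to the corresponding equality inside $[0,t_{n-1}]$. Since $\At([0,t_{n-1}])=\{a\in\At(\mL)\mid a\le t_{n-1}\}$ and $a\le t_i$ already forces $a\le t_{n-1}$ for every $i\le n-1$, the atom-counting sets agree in $\mL$ and in the interval. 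The induction hypothesis then gives $\chi([0,t_{n-1}];\lambda)=\prod_{i=1}^{n-1}\bigl(\lambda-|\{a\in\At(\mL)\mid a\le t_i,\ a\not\le t_{i-1}\}|\bigr)$.

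Next I would evaluate $S(\lambda)$. The key observation is that modularity of $t_{n-1}$ forces every $x$ with $x\wedge t_{n-1}=0$ to have rank at most $1$: from $\rho_\mL(x\wedge t_{n-1})+\rho_\mL(x\vee t_{n-1})=\rho_\mL(x)+\rho_\mL(t_{n-1})$ together with $x\wedge t_{n-1}=0$ and $\rho_\mL(x\vee t_{n-1})\le n$, one gets $\rho_\mL(x)\le 1$. Thus only $x=0$ (with $\mu_\mL(0)=1$, contributing $\lambda$) and the atoms $a\not\le t_{n-1}$ (each with $\mu_\mL(a)=-1$, contributing $-1$) survive, so $S(\lambda)=\lambda-|\{a\in\At(\mL)\mid a\not\le t_{n-1}\}|$. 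Since $t_n=1_\mL$, every atom satisfies $a\le t_n$, hence this equals $\lambda-|\{a\in\At(\mL)\mid a\le t_n,\ a\not\le t_{n-1}\}|$, which is precisely the missing last factor. Multiplying the two contributions closes the induction.

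The main obstacle, I expect, is the bookkeeping around the interval rather than any hard computation: one must verify carefully that modular elements of $\mL$ remain modular in $[0,t_{n-1}]$ and that the atom-counting sets are preserved under restriction. Both follow from the fact that $[0,t_{n-1}]$ inherits the rank function and the lattice operations from $\mL$, but these points deserve explicit checking. Once modularity is secured, the rank bound $\rho_\mL(x)\le 1$ and the Möbius values $\mu_\mL(a)=-1$ of atoms are immediate from Definition~\ref{def:mobius}.
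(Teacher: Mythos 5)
Your argument is correct and is exactly the standard derivation that the paper has in mind when it presents this statement as a corollary of Theorem~\ref{thm:stanley} (the paper itself omits the proof and defers to Stanley). The induction on rank, the inheritance of modularity and of the atom-counting sets by the interval $[0,t_{n-1}]$, and the rank bound $\rho_\mL(x)\le 1$ forcing $S(\lambda)=\lambda-|\{a\in\At(\mL)\mid a\not\le t_{n-1}\}|$ are all sound.
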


Corollary \ref{cor:stanley} and its generalizations (see e.g. \cite{hallam2015factoring,blass1997mobius}) are standard tools for computing the characteristic polynomial of geometric lattices.
Unfortunately, not all geometric lattices are supersolvable, hence their characteristic polynomial has to be computed using other approaches. This will be the case of \textit{rank-metric lattices}, which we introduce in the next section.

\section{Rank-Metric Lattices}
\label{sec:rml}

In this section we define the main combinatorial objects studied in this paper, namely \textit{rank-metric lattices}.
These can be seen as the $q$-analogue of the higher-weights Dowling lattices, which were introduced in 1971 by Dowling \cite{dowling1971codes} and later investigated and generalized by several authors; see below for further details on this.

We will also illustrate the connection between rank-metric lattices and the problem of computing the number of rank-metric codes of given dimension and minimum distance lower bounded by a given integer $d$. The latter is a central problem in contemporary coding theory.

\begin{notation}
    Throughout this paper, $q$ is a prime power and $n$, $m$ are integers with $n,m\geq 2$. We denote by $\Fq$ the finite field with $q$ elements and by $\F_{q^m}$ its extension of degree $m$.
    We let $\{e_1,\ldots,e_n\}$ be the standard basis of $\Fqn$. With a small abuse of notation we  denote by $\{e_1,\ldots,e_n\}$ also the standard basis of $\Fqmn$. All dimensions in the sequel are computed over $\F_{q^m}$, unless otherwise stated. We denote by $\le$ the inclusion of linear spaces, to emphasize the order structure.
\end{notation}

We start by defining the \textit{rank} of a vector with entries from the field extension $\F_{q^m}$.

\begin{definition}
	The \textbf{rank} (\textbf{weight}) of a vector $v\in\Fqm$ is the dimension over $\Fq$ of the $\F_q$-span of its entries.
	We denote it by $\rk(v)$.
\end{definition}

The rank of a vector is closely related to that of a matrix. To see this, let $\Gamma=\{\gamma_1,\ldots,\gamma_m\}$ be an ordered $\Fq$-basis of $\Fqm$. For a vector $v\in\Fqmn$ we let $\Gamma(v)$ be the $n\times m$ matrix with entries in $\Fq$ defined by 
\begin{equation*}
    v_i=:\sum_{j=1}^m\Gamma(v)_{i,j}\gamma_j \quad \mbox{for all $i$}.
\end{equation*}

  By definition, the map $\Fqmn\longrightarrow\Fq^{n\times m}$ given by $v\longmapsto\Gamma(v)$ is an $\Fq$-isomorphism and we have  $\rk(v)=\rk(\Gamma(v))$ for all bases $\Gamma$ and for all $v \in \F_{q^m}^n$.
  
  The following properties of the vector rank will be needed repeatedly. They easily follow from the connection between vector and matrix rank explained above.
  
  \begin{proposition}
  \label{prop:rk}
            The following hold.
      \begin{enumerate}
          \item $\rk(v)\geq 0$ for all $v\in\Fqmn$, with equality if and only if $v=0$.
          \item $\rk(v)=\rk(\lambda v)$ for all $v\in\Fqmn$ and $\lambda\in\Fqm\setminus\{0\}$.
          \item $\rk(v+w)\leq \rk(v)+\rk(w)$ for all $v,w\in\Fqmn$ \, (triangular inequality).
          \item $\rk(v+w) \ge \rk(v)-\rk(w)$ for all $v,w\in\Fqmn$.
      \end{enumerate}
  \end{proposition}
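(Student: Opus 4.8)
The plan is to transport all four statements to well-known facts about matrices over $\Fq$ via the $\Fq$-linear isomorphism $v\mapsto\Gamma(v)$, supplemented where convenient by the description of $\rk(v)$ as the dimension over $\Fq$ of the $\Fq$-span $\langle v_1,\dots,v_n\rangle_{\Fq}$ of the entries of $v$. Both descriptions are at hand: the matrix picture turns additive statements into rank inequalities for matrices, while the span picture is the more natural setting in which to handle scalar multiplication by elements of $\Fqm$.

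For part (1), the matrix $\Gamma(v)$ has nonnegative rank, so $\rk(v)\ge 0$; and since $\Gamma$ is injective, $\Gamma(v)$ is the zero matrix precisely when $v=0$, which is exactly the equality case $\rk(v)=0$. For part (2), I would argue directly from the span description: multiplication by a fixed $\lambda\in\Fqm\setminus\{0\}$ is an $\Fq$-linear bijection of $\Fqm$, hence it maps $\langle v_1,\dots,v_n\rangle_{\Fq}$ isomorphically onto $\langle \lambda v_1,\dots,\lambda v_n\rangle_{\Fq}$ and preserves its dimension, giving $\rk(\lambda v)=\rk(v)$.

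For part (3), the $\Fq$-linearity of $\Gamma$ gives $\Gamma(v+w)=\Gamma(v)+\Gamma(w)$, and the triangular inequality then follows from the subadditivity of the matrix rank, $\rk(A+B)\le \rk(A)+\rk(B)$ (equivalently, the $\Fq$-span of the entries of $v+w$ is contained in the sum of the spans of the entries of $v$ and of $w$, whose dimension is at most $\rk(v)+\rk(w)$). Finally, part (4) is the reverse triangular inequality and follows formally from (2) and (3): writing $v=(v+w)+(-w)$ and applying (3) yields $\rk(v)\le \rk(v+w)+\rk(-w)$, while (2) with $\lambda=-1$ gives $\rk(-w)=\rk(w)$; rearranging produces $\rk(v+w)\ge \rk(v)-\rk(w)$.

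The computations here are all routine, so there is no serious obstacle. If anything, the only point requiring a little care is part (2): the isomorphism $\Gamma$ is only $\Fq$-linear and does not interact directly with multiplication by scalars from $\Fqm$, so the cleanest route is the span argument above rather than the matrix dictionary. Alternatively, one checks that $\Gamma(\lambda v)=\Gamma(v)M_\lambda$ for an invertible $m\times m$ matrix $M_\lambda$ over $\Fq$ representing multiplication by $\lambda$, and then invokes invariance of rank under multiplication by invertible matrices.
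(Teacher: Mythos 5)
Your proof is correct and follows exactly the route the paper indicates: the paper offers no written proof, merely asserting that the properties ``easily follow from the connection between vector and matrix rank,'' and your argument is a careful elaboration of precisely that correspondence (with the sensible extra observation that part (2) is most cleanly handled via the span of the entries, since $\Gamma$ is only $\Fq$-linear). Nothing is missing; the reverse triangular inequality in part (4) is correctly deduced from (2) and (3).
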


We are now ready to introduce the class 
\textit{rank-metric lattices},
whose elements are the $\mathbb{F}_{q^m}$-linear subspaces of $\mathbb{F}_{q^m}^n$ having a basis of vectors with rank bounded from above, ordered by inclusion.

\begin{definition}
    Let $i\in \{1, \ldots n\}$. The \textbf{rank-metric lattice} (\textbf{RML} in short) $\mL_i(n,m;q)$ associated with the $4$-tuple $(i,n,m,q)$ is the geometric sublattice of $\mL(\Fqmn)$ whose atoms are the $1$-dimensional $\Fqm$-subspaces of $\Fqmn$ generated by the vectors of rank at most $i$, that is,
    \begin{align*}
        \mL_i(n,m;q):=
        \{\<v_1,\ldots,v_\ell\>_{\Fqm} : \ell \ge 1, \, v_1,\ldots,v_\ell\in T_i(n,m;q)\},
    \end{align*}
    where $T_i(n,m;q):=\{v\in\Fqmn:\rk(v)\leq i\}$. The order is given by the inclusion of $\F_{q^m}$-linear subspaces of $\F_{q^m}^n$. 
\end{definition}

We will need the following observations, which we will use without explicitly referring to them.

\begin{remark}
\label{rem:Li}
    It follows from the definitions that $\mL_i(n,m;q)$ is a geometric lattice of rank~$n$, where the rank function is given by the $\F_{q^m}$-dimension. Furthermore,
     for any $X,Y\in \mL_i(n,m;q)$ we have $X\vee_i Y=X+Y$ (the ordinary sum of linear subspaces) and $X\wedge_i Y=\<x\in X\cap Y:\rk(x)\leq i\>$, where $\vee_i$ and $\wedge_i$ denote the operations of join and meet in $\mL_i(n,m;q)$, respectively.
     By definition we have that
     $\mL_0(n,m;q)$ is the trivial lattice $\{0\}$ and that $\mL_n(n,m;q)=\{X\leq\Fqmn\}$ is the lattice of $\F_{q^m}$-subspaces of $\F_{q^m}^n$.
\end{remark}

As already mentioned, rank-metric lattices can be seen as the $q$-analogues (or also rank-metric analogues) of a well-studied class of lattices, known under the name of  \textit{higher-weight Dowling lattices}. 
These combinatorial geometries were introduced by Dowling in connection with a fundamental problem in coding theory; see~\cite{dowling1971codes,dowling1973q}. 
Their theory and connections with the Critical Problem by Crapo and Rota was further explored by
Zaslavsky~\cite{zaslavsky1987mobius}, Bonin~\cite{bonin1993automorphism,bonin1993modular}, Kung~\cite{kung1996critical}, Brini~\cite{brini1982some}, Games~\cite{games1983packing},
Britz~\cite{britz2005extensions},
and more recently by Ravagnani~\cite{ravagnani2019whitney}. 
The techniques for computing the characteristic polynomials of these lattices for arbitrary parameters are currently unknown.

\begin{definition}
Let $Q$ be a prime power.
The \textbf{Hamming weight} of a vector $v\in\F_Q^n$ is  $\wt^\H(v):=|\{i\in[n]:v_i\neq 0\}|$.
For $i\in\{1,\ldots,n\}$, the \textbf{higher-weight Dowling lattice} (\textbf{HWDL} in short) $\mH_i(n;Q)$ associated to the 3-tuple $(i,n,Q)$ is the geometric sublattice of $\mL(\F_Q^n)$ whose atoms are the $1$-dimensional $\F_Q$-subspaces of $\F_Q^n$ generated by vectors of Hamming weight at most $i$, that is \begin{align*}
		\mH^\H_i(n;Q):=\{\<v_1,\ldots,v_n\>_{\F_Q} : \ell \ge 1, \, v_1,\ldots,v_\ell\in T_i^\H(n;Q)\},
	\end{align*}
	where $T_i^\H(n;Q):=\{v\in\F_Q^n:\wt^\H(v)\leq i\}$.
\end{definition}

Computing the characteristic polynomial (or equivalently the Whitney numbers of the first kind) of higher-weight Dowling lattices
would allow to solve a long-standing open problem in coding theory and finite geometry, namely the famous \textit{Segre's conjecture}, known as the \textit{MDS Conjecture} in the context of coding theory.
More generally, computing the 
characteristic polynomial of higher-weight Dowling lattices
would allow one to give formulas for the number of Hamming-metric codes having given dimension and minimum distance bounded from below by an integer $d$.
The $q$-analogy between 
higher-weight Dowling lattices and rank-metric lattices ``lifts''
to an analogy between the corresponding questions about codes, as we now briefly illustrate.

\begin{definition}
    A (\textbf{rank-metric}) \textbf{code} is an $\F_{q^m}$-linear subspace $C \le \Fqmn$. The \textbf{minimum} (\textbf{rank}) \textbf{distance} of $C$ is $d(C):=\min\{\rk(v):v\in\Fqmn\mid v\neq 0\}$, where the zero code~$\{0\}$ 
    has minimum distance $\min\{n,m\}+1$
    by definition. In this context, $n$ is called the \textbf{length} of $C$. The \textbf{dual} of $C$ is the code $C^\perp:=\{v\in\Fqmn:c\cdot v=0 \textup{ for all } c \in C\}$, where $\cdot$ denotes the standard inner product of vectors.
\end{definition}

The \textit{equivalence} between the problem of counting rank-metric codes with given dimension and minimum distance lower bounded by $d$ and that of computing the Whitney numbers of rank-metric lattices is expressed by 
Theorem~\ref{thm:wjalphak} below
(the general version of the result is~\cite[Theorem~3.1]{ravagnani2019whitney}). In order to state it, we will need the following symbols and the notion of Gaussian binomial coefficients; see~\cite{andrews1998theory}.

\begin{notation} \label{notazvarie}
We let $w_j(i,n,m;q)$ be the $j$-th Whitney number of the second kind of the lattice $\mL_i(n,m;q)$.
We denote by $\alpha_k(i,n,m;q)$ the number of $k$-dimensional rank-metric codes $C \le \F_{q^m}^n$ with $d(C) \ge i+1$.
The $j$-th Whitney number of the first kind of $\mL_i(n,m;q)$ is denoted by $W_j(i,n,m;q)$.
\end{notation}

	\begin{definition}
		Let $a,b$ be integers and let $Q$ be a prime power. The $Q$-binomial coefficients of $a$ and $b$ is
		\begin{equation*}
			\Qbin{a}{b}=
			\begin{cases}
				0 & \textup{ if } b<0 \textup{ or } 0\leq a <b,\\[1ex]
				1 & \textup{ if } b=0 \textup{ and } a\geq 0,\\[1ex]
				\displaystyle\prod_{i=0}^{b-1}\frac{Q^{a-i}-1}{Q^{i+1}-1} & \textup{ if } b>0 \textup{ and } a\geq b.\\[1ex]
			
			\end{cases}
		\end{equation*}
	\end{definition}

We can now state the rank-metric instance of~\cite[Theorem~3.1]{ravagnani2019whitney}.

\begin{theorem}
\label{thm:wjalphak}
The following hold for any $j\in\{0,\ldots,n\}$.
\begin{enumerate}[label=(\arabic*)]
	\item $\displaystyle w_j(i,n,m;q)=\sum_{k=0}^j\alpha_k(i,n,m;q)\qmbin{n-k}{j-k}(-1)^{j-k}q^{m\binom{j-k}{2}}$.
	\item $\displaystyle \alpha_j(i,n,m;q)=\sum_{k=0}^jw_k(i,n,m;q)\qmbin{n-k}{j-k}$.
\end{enumerate}
\end{theorem}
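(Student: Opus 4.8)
The plan is to prove the two formulas as a Möbius-inversion pair, using the relationship between the Whitney numbers of the second kind $w_j(i,n,m;q)$ (counting $j$-dimensional elements of $\mL_i(n,m;q)$) and the code-counting quantities $\alpha_k(i,n,m;q)$. The key structural observation is that an element $X \in \mL_i(n,m;q)$ of rank (dimension) $j$ is a $j$-dimensional subspace of $\Fqmn$, but I must count these elements correctly: the lattice consists of those subspaces spanned by vectors of rank at most $i$. The connection to codes comes from dualizing. First I would set up a bijective/inclusion relation: a $k$-dimensional subspace $D \le \Fqmn$ is a \emph{code} $C$ with $d(C) \ge i+1$ exactly when $C$ contains \emph{no} nonzero vector of rank $\le i$, i.e. $C \cap T_i(n,m;q) = \{0\}$. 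These are precisely the subspaces that meet the atom set trivially, the complementary condition to membership in the lattice.

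Next, the heart of the argument is a double-counting identity relating, for each $j$, the number of flags or incidences between lattice elements of dimension $j$ and arbitrary $k$-dimensional subspaces. The standard approach (following the general statement \cite[Theorem~3.1]{ravagnani2019whitney}) is to establish one of the two identities directly by a counting argument and then obtain the other by Möbius inversion over the subspace lattice of $\Fqmn$. Concretely, for part (2) I would argue that every $j$-dimensional subspace $X$ of $\Fqmn$ decomposes according to its intersection pattern with $T_i$: one counts $j$-dimensional subspaces by stratifying over the dimension $k$ of the largest subspace of $X$ lying in the rank-$\le i$ locus, equivalently over how $X$ sits relative to the lattice $\mL_i$. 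The Gaussian binomial $\qmbin{n-k}{j-k}$ counts the number of ways to extend a fixed $k$-dimensional ``good'' subspace to a $j$-dimensional one inside $\Fqmn$, and $\alpha_k$ supplies the count of the complementary ``bad'' directions that contribute no rank-$\le i$ vectors.

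Once one of the two identities is in hand, the other follows formally. The pair of transformations with kernels $\qmbin{n-k}{j-k}$ and $\qmbin{n-k}{j-k}(-1)^{j-k}q^{m\binom{j-k}{2}}$ are mutually inverse; this is the $q$-analogue of binomial inversion and is a standard Gaussian-binomial identity, namely
\begin{equation*}
\sum_{k=\ell}^{j} (-1)^{j-k} q^{m\binom{j-k}{2}} \qmbin{n-\ell}{j-k}\qmbin{n-k}{k-\ell} = \delta_{j,\ell}.
\end{equation*}
Substituting the formula for $w_j$ from part~(1) into the sum defining $\alpha_j$ in part~(2) (or vice versa), interchanging the order of summation, and applying this inversion identity collapses the double sum to a single term, yielding the claimed equality. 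Thus the two statements are equivalent given the inversion lemma, and it suffices to prove one of them directly.

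The main obstacle I anticipate is the direct counting step establishing the base identity: one must correctly account for the lattice structure, in particular the fact that the meet in $\mL_i(n,m;q)$ is $X\wedge_i Y=\<x\in X\cap Y:\rk(x)\leq i\>$ rather than the ordinary intersection, so a $j$-dimensional subspace of $\Fqmn$ need not be an element of $\mL_i$. The careful bookkeeping is to partition all $j$-dimensional subspaces of $\Fqmn$ by the dimension of their ``rank-$\le i$ part'' and verify that this stratification matches the product $\alpha_k \cdot \qmbin{n-k}{j-k}$. Since this is exactly the rank-metric specialization of the already-established general result \cite[Theorem~3.1]{ravagnani2019whitney}, I would either invoke that theorem directly with the rank weight playing the role of the general weight function, or reproduce its short double-counting proof verbatim in this setting, checking only that the rank function satisfies the axioms (monotonicity and the dimension-compatibility recorded in Remark~\ref{rem:Li}) required by the general framework.
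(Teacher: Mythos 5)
Your overall plan coincides with the paper's: the paper offers no proof of this theorem at all, but states it as the rank-metric instance of the cited general result, whose proof is exactly the scheme you describe (prove one identity by a counting/Möbius argument, then pass to the other by inverting the kernel $(-1)^{j-k}q^{m\binom{j-k}{2}}\qmbin{n-k}{j-k}$, i.e.\ the Möbius function of the subspace lattice of $\Fqmn$). Your fallback of invoking that theorem directly is precisely what the paper does, and your formal equivalence of (1) and (2) is sound, up to a typo in your displayed inversion identity: the kernels should compose as $\qmbin{n-k}{j-k}\qmbin{n-\ell}{k-\ell}$, after which Lemma~\ref{lem:bin} collapses the inner sum to $\delta_{j,\ell}$.

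The self-contained counting argument you sketch for the base identity, however, has a genuine gap, and it stems from misreading $w_j(i,n,m;q)$. Notation~\ref{notazvarie} as printed contradicts the paper's earlier definition of Whitney numbers (the first/second kinds are evidently swapped there); in this theorem $w_j(i,n,m;q)$ must be the $j$-th Whitney number of the \emph{first} kind, i.e.\ the signed sum $\sum\mu(0,X)$ over the $X\in\mL_i(n,m;q)$ of dimension $j$, not the number of such $X$. Indeed, identity (2) with $j=1$ forces $w_1=\alpha_1-\qmbin{n}{1}=-|\At(\mL_i(n,m;q))|$, which is negative. Consequently no positive stratification of the $j$-dimensional subspaces of $\Fqmn$ by the dimension of their rank-$\le i$ part can yield (2) as "counts matching the product $\alpha_k\cdot\qmbin{n-k}{j-k}$"; the signs have to enter somewhere. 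The correct mechanism is Möbius inversion over $\mL_i(n,m;q)$ itself: for a subspace $D\le\Fqmn$ put $\sigma(D):=\<v\in D:\rk(v)\le i\>\in\mL_i(n,m;q)$, note that for $X\in\mL_i(n,m;q)$ the conditions $X\le D$ and $X\le\sigma(D)$ are equivalent, so the number of $j$-dimensional $D$ with $\sigma(D)\ge X$ is $\qmbin{n-\dim X}{j-\dim X}$, while $\alpha_j(i,n,m;q)$ is the number of $j$-dimensional $D$ with $\sigma(D)=0$. Möbius inversion of $g(X)=\sum_{Y\ge X}f(Y)$, where $f(X)$ counts the $D$ with $\sigma(D)=X$, then gives
\begin{equation*}
\alpha_j(i,n,m;q)=\sum_{X\in\mL_i(n,m;q)}\mu(0,X)\qmbin{n-\dim X}{j-\dim X}=\sum_{k=0}^j w_k(i,n,m;q)\qmbin{n-k}{j-k},
\end{equation*}
with $w_k$ the first-kind Whitney number. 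With this correction your argument goes through and is the proof of the general theorem the paper cites.
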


A very natural question is for which values of $j$ we have
$\alpha_j(i,n,m;q)=0$.
A partial answer to this question is given by the following result, which follows from~\cite[Theorem~5.4]{delsarte1978bilinear}.

\begin{theorem}[rank-metric Singleton bound]
	\label{thm:singleton}
	Let $1 \le k \le n$ and $1 \le d \le \min\{n,m\}$ be integers.
	Let $C \le \F_{q^m}^n$ be a code of dimension $k \ge 1$ and minimum distance at least $d$. We have
	    \begin{equation*}
	        mk\leq \max\{n,m\}(\min\{n,m\}-d+1).
	    \end{equation*}
	\end{theorem}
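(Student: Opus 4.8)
The plan is to prove the bound by a puncturing argument on the matrix model of rank-metric codes, carried out along the \emph{shorter} of the two ambient dimensions. First I would pass from the vector code $C \le \Fqmn$ to its matrix image under the $\F_q$-isomorphism $v \mapsto \Gamma(v)$ of Section~\ref{sec:rml}: the set $\Gamma(C) \subseteq \Fq^{n \times m}$ is $\F_q$-linear with $\dim_{\F_q} \Gamma(C) = mk$, hence $|\Gamma(C)| = q^{mk}$, and since $\rk(v) = \rk(\Gamma(v))$ for all $v$, the minimum rank distance is preserved. Because transposition preserves the rank of a matrix, $\Gamma(C)^\top$ is a matrix rank-metric code with the same cardinality and the same minimum distance; replacing $\Gamma(C)$ by its transpose when $n > m$, I obtain in all cases a code $\mathcal{D} \subseteq \Fq^{N \times M}$ with exactly $N := \min\{n,m\}$ rows and $M := \max\{n,m\}$ columns, satisfying $|\mathcal{D}| = q^{mk}$ and minimum distance at least $d$.

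Next I would define the puncturing map $\pi \colon \Fq^{N \times M} \to \Fq^{(N-d+1) \times M}$ that retains the first $N - d + 1$ rows and deletes the remaining $d - 1$ rows; note $N - d + 1 \ge 1$ since $d \le \min\{n,m\} = N$ by hypothesis. The key step is to show that $\pi$ is injective on $\mathcal{D}$. Indeed, if $A, B \in \mathcal{D}$ satisfy $\pi(A) = \pi(B)$ with $A \ne B$, then the nonzero matrix $A - B$ is supported on at most $d - 1$ rows, so its row space has dimension at most $d-1$ and therefore $\rk(A-B) \le d - 1 < d$; since the minimum distance of $\mathcal{D}$ is at least $d$, this forces $A = B$, a contradiction. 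Hence $\pi$ restricts to an injection on $\mathcal{D}$.

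From injectivity I would conclude
\begin{equation*}
q^{mk} = |\mathcal{D}| = |\pi(\mathcal{D})| \le |\Fq^{(N-d+1)\times M}| = q^{M(N-d+1)},
\end{equation*}
and taking logarithms in base $q$ yields $mk \le M(N-d+1) = \max\{n,m\}(\min\{n,m\} - d + 1)$, which is the claim.

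The argument is almost entirely routine; the only point I regard as the crux is the decision to puncture along the \emph{short} dimension $N = \min\{n,m\}$ rather than the long one. Puncturing coordinates of the vector code directly (equivalently, deleting rows without first transposing when $n > m$) would only give the weaker estimate $mk \le m(n-d+1)$, which fails to match the right-hand side precisely when $n > m$. The transposition reduction, made legitimate by the transpose-invariance of matrix rank, is exactly what repairs this asymmetry and produces the symmetric bound in $\min$ and $\max$. I would also remark that $\F_{q^m}$-linearity is used only to identify $|\Gamma(C)|$ with $q^{mk}$: the injectivity of $\pi$ relies solely on the minimum-distance hypothesis, so the identical argument bounds the cardinality of an arbitrary, possibly nonlinear, rank-metric code.
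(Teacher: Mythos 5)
Your proof is correct. It diverges from the paper only in one respect: after passing to the matrix model via $v \mapsto \Gamma(v)$ (a step both arguments share), the paper immediately invokes Delsarte's Theorem~5.4 of \cite{delsarte1978bilinear} as a black box to get $|C| \le \min\{q^{m(n-d+1)}, q^{n(m-d+1)}\}$, whereas you reprove that bound from scratch by the transpose-then-puncture argument. Your injectivity step is sound: if $\pi(A)=\pi(B)$ with $A \ne B$, the difference $A-B$ lies in the ($\F_q$-linear) code $\mathcal{D}$, is nonzero, and is supported on $d-1$ rows, so its rank is at most $d-1$, contradicting the minimum distance; and your observation that one must puncture along the short side $\min\{n,m\}$ (legitimized by transpose-invariance of matrix rank) is exactly the point that makes the two expressions $q^{m(n-d+1)}$ and $q^{n(m-d+1)}$ collapse to the single symmetric bound $\max\{n,m\}(\min\{n,m\}-d+1)$ stated in the theorem. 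What your route buys is self-containedness and the (correct) extra remark that linearity is only used to compute $|\Gamma(C)|=q^{mk}$, so the cardinality bound holds for nonlinear codes as well; what the paper's route buys is brevity, since the heavy lifting is delegated to the cited reference.
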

	\begin{proof}
	Let $\Gamma$ be a basis of $\F_{q^m}$ over $\F_q$. Then $\{\Gamma(v) \,:\, v \in C\}$
	is an $\F_q$ linear space of matrices, of the same cardinality as $C$, in which every non-zero matrix has rank at least $d$. Therefore by~\cite[Theorem~5.4]{delsarte1978bilinear} we have that 
	$|C| \le \min\{q^{m(n-d+1)}, q^{n(m-d+1)}\}$. The desired bound follows from the fact that $|C|=q^{mk}$.
\end{proof}

We conclude this section with the following lemma summarizing some well-known properties of Gaussian binomial coefficients. We will need them in various proofs later.

	\begin{lemma}
		\label{lem:bin}
		Let $a,b,c$ be integers, let $Q$ be a prime power, and let $x,y$ be rational numbers. The following hold.
		\begin{enumerate}
			\item $\displaystyle \Qbin{a}{b}\Qbin{b}{c}=\Qbin{a}{c}\Qbin{a-c}{a-b}$.
			\item $\displaystyle \Qbin{a+b}{c}=\sum_{j=0}^cq^{j(b-c+j)}\Qbin{a}{j}\Qbin{b}{c-j}=\sum_{j=0}^cQ^{(c-j)(a-j)}\Qbin{a}{j}\Qbin{b}{c-j}$.
			\item $\displaystyle\sum_{j=0}^c\Qbin{c}{j}(-1)^jQ^{\binom{j}{2}}x^{c-j}y^j=\begin{cases}
				0 & \textup{ if } c=0,\\
				\displaystyle\prod_{j=0}^{c-1}(x-Q^jy) & \textup{ if } c\geq 1.
			\end{cases}$

		\end{enumerate}
	\end{lemma}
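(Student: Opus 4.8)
The plan is to prove the three identities separately, each by reducing it to a standard $Q$-analogue fact; throughout I may assume $0 \le c \le b \le a$, since otherwise one or both sides vanish by the definition of $\Qbin{\cdot}{\cdot}$ and the statement is verified directly from the case distinctions in that definition.

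For item (1), I would argue purely algebraically from the product formula. Writing $\Qbin{a}{b}=[a]!\,/([b]!\,[a-b]!)$ with the $Q$-factorial $[n]! := \prod_{i=1}^{n}(Q^i-1)/(Q-1)$, both sides expand and telescope to $[a]!\,/([a-b]!\,[b-c]!\,[c]!)$, giving the equality at once. This is the $Q$-analogue of the ``subset of a subset'' identity and requires nothing beyond cancelling factorials.

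For item (2), I recognize the identity as the $Q$-Vandermonde convolution. The most transparent argument is a subspace count: fixing a direct-sum decomposition of $\F_Q^{a+b}$ into parts $U,W$ of dimensions $a$ and $b$, I would classify each $c$-dimensional subspace $S$ by the dimension $j$ of its image under the projection onto the first part $U$. Counting the subspaces with a given $j$ contributes the factor $\Qbin{a}{j}\Qbin{b}{c-j}$ together with a power of $Q$ recording the ``shearing'' freedom in the lift, and this power works out to be exactly $j(b-c+j)$. Projecting onto $W$ instead (so that $S\cap U$ has dimension $j$ and the image in $W$ has dimension $c-j$) produces the second expression with exponent $(c-j)(a-j)$, so the two displayed sums are just two readings of the same enumeration. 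Alternatively, both forms follow by induction on $b$ from the Pascal recurrences for $Q$-binomials.

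For item (3), the claim is precisely the finite $Q$-binomial theorem, and the case $c=0$ is the stated empty convention, so I assume $c \ge 1$ and induct on $c$. Writing $P_c$ for the product on the right, I would use $P_c = P_{c-1}\,(x - Q^{c-1}y)$, insert the inductive formula for $P_{c-1}$, distribute, and re-index the $-Q^{c-1}y$ summand. Matching the coefficient of $x^{c-j}y^j$ then comes down to the Pascal recurrence $\Qbin{c}{j}=\Qbin{c-1}{j}+Q^{c-j}\Qbin{c-1}{j-1}$ together with the exponent identity $\binom{j-1}{2}+(c-1)=\binom{j}{2}+(c-j)$, i.e.\ $\binom{j}{2}-\binom{j-1}{2}=j-1$. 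The main delicacy across all three parts is this bookkeeping of the $Q$-powers---the $\binom{j}{2}$ shifts in (3) and the choice of which Pascal recurrence to invoke in (2) and (3)---but none of it presents a genuine obstacle, since the lemma merely collects standard facts that I will cite or verify in a line each.
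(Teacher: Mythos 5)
The paper gives no proof of this lemma at all: it is presented as a list of well-known properties of Gaussian binomial coefficients (with \cite{andrews1998theory} as the background reference), so there is no in-paper argument to compare yours against. Your three arguments are the standard ones and are correct: the $Q$-factorial cancellation for (1) (noting $\Qbin{a-c}{a-b}=\Qbin{a-c}{b-c}$ by symmetry), the $Q$-Vandermonde subspace count for (2) with the two exponents $j(b-c+j)$ and $(c-j)(a-j)$ arising from the two choices of projection, and the induction via $P_c=P_{c-1}(x-Q^{c-1}y)$ combined with $\Qbin{c}{j}=\Qbin{c-1}{j}+Q^{c-j}\Qbin{c-1}{j-1}$ and $\binom{j}{2}-\binom{j-1}{2}=j-1$ for (3). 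One small caveat on part (3): the value $0$ stated for $c=0$ is not ``the empty convention'' as you assert --- the sum at $c=0$ is the single term $\Qbin{0}{0}x^0y^0=1$, which agrees with the empty product being $1$; the $0$ in the paper's case distinction appears to be a typo, and your induction in fact establishes the value $1$ there, not $0$. This edge case is immaterial to how the lemma is used later in the paper.
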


\section{Supersolvability}
\label{sec:supersolvable}

As Stanley's modular factorization theorem shows, the characteristic polynomial of supersolvable geometric lattices is particularly easy to compute and completely determined by the combinatorial structure of the lattice's atoms; see Theorem~\ref{thm:stanley}.
It is therefore very natural to ask which rank-metric lattices are supersolvable, a question that we answer completely in this section. 

Continuing the analogy with the Hamming-metric case,
we will first state which higher-weight Dowling lattices are supersolvable. These were fully characterized by Bonin in~\cite{bonin1993modular} and are very few, as the following result shows.

\begin{theorem}[\text{see \cite{bonin1993modular}}]
	\label{thm:dowsupersol}
	The only non-trivial supersolvable higher-weight Dowling lattices are
	$\mH_1(n;q)$, $\mH_2(n;q)$,
	$\mH_{n-1}(n;q)$ for $n\geq 2$,
	and $\mH_n(n;q)$.
\end{theorem}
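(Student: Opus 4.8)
The plan is to prove this by exhibiting explicit maximal chains of modular elements in the four lattices $\mH_1(n;q)$, $\mH_2(n;q)$, $\mH_{n-1}(n;q)$, $\mH_n(n;q)$ (establishing supersolvability via Corollary~\ref{cor:stanley}), and then ruling out all remaining $\mH_i(n;q)$ with $3\le i\le n-2$. For the positive direction, the extreme cases are essentially classical: $\mH_n(n;q)$ is the full subspace lattice $\mL(\F_q^n)$, which is supersolvable since any flag $0<\langle e_1\rangle<\langle e_1,e_2\rangle<\cdots$ is a chain of modular elements (every element of a modular geometric lattice is modular). For $\mH_1(n;q)$ the atoms are spanned by the standard basis vectors $e_1,\dots,e_n$, so the lattice is (isomorphic to) the Boolean lattice of subsets of $[n]$, which is supersolvable with every chain modular. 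The case $\mH_2(n;q)$ is the \emph{$q$-analogue of the partition lattice}, and one would cite Bonin's identification of its modular elements (or reprove that the flags built from coordinate subspaces together with the weight-$2$ atoms give a modular chain). The case $\mH_{n-1}(n;q)$ is the dual-style extremal case and requires checking directly that the chain of coordinate subspaces $\langle e_1,\dots,e_j\rangle$ consists of modular elements.

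The main technical content, and the hard part, is the negative direction: showing that for $3\le i\le n-2$ the lattice $\mH_i(n;q)$ admits \emph{no} maximal chain of modular elements. The approach I would take is to characterize the modular elements of $\mH_i(n;q)$ completely and show they are too sparse to form a maximal chain. Concretely, I would first prove a structural lemma: a flat $X$ of $\mH_i(n;q)$ is modular only if it is of a very restricted coordinate-supported type (essentially a coordinate subspace, up to the low-rank atoms), by testing the modularity equation~\eqref{eq:semimodrel} against a carefully chosen pair of flats $X,Y$ that forces a strict inequality $\rho(X\wedge Y)+\rho(X\vee Y)<\rho(X)+\rho(Y)$. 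The key obstruction phenomenon is that with weight bound $i$ in the middle range, there exist flats whose join (ordinary sum) contains many low-weight vectors not ``seen'' inside either summand, so the meet $X\wedge_i Y=\langle x\in X\cap Y:\wt^\H(x)\le i\rangle$ drops in rank, breaking modularity.

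The core obstacle is making this sparsity argument rigorous and uniform across the whole range $3\le i\le n-2$: one must produce, for \emph{every} candidate modular flat $t$ of each intermediate rank, an explicit witness $s$ violating \eqref{eq:semimodrel}, and then argue that because modular elements of adjacent ranks cannot be linked by covering relations, no full chain $0=t_0\cov t_1\cov\cdots\cov t_n=1$ of modular elements can exist. I expect the cleanest route is to follow Bonin's template from~\cite{bonin1993modular}, adapting his lemma that in a supersolvable geometric lattice the modular elements of each rank must be comparable to all modular elements of higher rank; combined with the scarcity of modular flats this yields a contradiction. I would organize the section as: (i) a lemma computing/classifying modular flats; (ii) a lemma showing the classified modular flats fail to chain when $3\le i\le n-2$; (iii) verification of the four supersolvable cases by direct exhibition of modular chains. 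The delicate point throughout is the interaction between the combinatorial meet $\wedge_i$ (which restricts to low-weight generators) and the lattice-theoretic rank function, since this is exactly where the analogy with the full subspace lattice breaks and where the counterexamples live.
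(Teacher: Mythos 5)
The paper does not actually prove this statement: Theorem~\ref{thm:dowsupersol} is quoted from Bonin~\cite{bonin1993modular} and used as motivation for the rank-metric analogue, so there is no in-paper proof to compare yours against. That said, your outline follows the same architecture as Bonin's argument and as the paper's own proof of the rank-metric counterpart (Theorem~\ref{thm:superLi} via the classification of modular flats in Theorem~\ref{thm:modLi}): first characterize the modular elements, then observe they are too sparse to form a maximal chain in the middle range, and separately exhibit modular coordinate chains in the extremal cases. As a roadmap this is sound, and your identification of where the analogy with the full subspace lattice breaks (the meet $\wedge_i$ discarding high-weight vectors) is exactly the right mechanism.

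Two caveats. First, what you have written is a plan, not a proof: the load-bearing steps --- the classification of modular flats of $\mH_i(n;q)$ for $3\le i\le n-2$, and the verification that the coordinate chain is modular in $\mH_{n-1}(n;q)$ and $\mH_2(n;q)$ --- are deferred to ``follow Bonin's template,'' so nothing is actually established beyond the easy cases $i\in\{1,n\}$. Second, your proposed mechanism for the negative direction (``modular elements of adjacent ranks cannot be linked by covering relations'') is more convoluted than necessary and not obviously correct as stated. The standard and cleaner route, mirrored in the paper's proof of Theorem~\ref{thm:superLi}, is to fix a single rank and show that \emph{no} flat of that rank is modular: once the modular flats are known to be exactly the subspaces contained in $T_i^\H(n;q)$ together with $\F_q^n$ itself, one invokes the anticode bound to see that a subspace in which every vector has Hamming weight at most $i$ has dimension at most $i$, so no flat of rank $i+1$ is modular when $i+1\le n-1$; since every maximal chain must pass through rank $i+1$, supersolvability fails. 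I would restructure step (ii) of your plan around that observation rather than around comparability of modular elements across ranks.
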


The rank-metric analogue of 
Theorem~\ref{thm:dowsupersol}, to the proof of which this section is entirely devoted, is the following.

\begin{theorem}
\label{thm:superLi}
    The lattice $\mL_i(n,m;q)$ is supersolvable if and only if one of the following holds:
    \begin{enumerate}
        \item $i\in\{1,n-1,n\}$,
        \item $m\in\{2,\ldots,i\}$.
    \end{enumerate}
\end{theorem}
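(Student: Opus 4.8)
The plan is to prove the two directions separately, relying throughout on Corollary~\ref{cor:stanley} and on a characterization of the modular elements of $\mL_i(n,m;q)$ (which I expect to establish earlier in this section, following Bonin's approach as indicated in the introduction). To show supersolvability it suffices to exhibit a maximal chain $0 = t_0 \cov t_1 \cov \cdots \cov t_n = 1$ of modular elements; to show non-supersolvability I would argue that no such chain can exist, by showing that the modular elements are too ``sparse'' to form a maximal chain. The key structural fact I would aim to isolate first is: \emph{which coordinate subspaces $\<e_{j_1},\ldots,e_{j_k}\>$, and more generally which subspaces spanned by low-rank vectors, are modular in $\mL_i(n,m;q)$}. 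Because the meet $\wedge_i$ is not the ordinary intersection (it is the span of the rank-$\le i$ vectors inside the intersection), verifying the modular equality~\eqref{eq:semimodrel} is genuinely different from the Hamming case, and this is where the divergence from higher-weight Dowling lattices originates.

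\medskip

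For the easy cases I would proceed as follows. When $i=n$ the lattice is the full subspace lattice $\mL(\Fqmn)$, which is modular (every element is modular) and hence supersolvable, so the chain of any flag of subspaces works. When $i=1$ the lattice is (isomorphic to) the subspace lattice of $\Fqn$ — as noted in the introduction it is the first member of the family — and is likewise modular, hence supersolvable. For $i=n-1$ I would show that the coordinate hyperplanes $H_j = \<e_1,\ldots,\widehat{e_j},\ldots,e_n\>$ and more generally the standard coordinate flats are modular: here every proper coordinate subspace has dimension $\le n-1$ and so is spanned by vectors of rank $\le n-1$, which should make the meet behave like ordinary intersection on the relevant chain, yielding a maximal modular chain. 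The remaining positive case $m \in \{2,\ldots,i\}$ is the most interesting: when $m \le i$, every vector of $\Fqmn$ has rank at most $m \le i$ (its entries span a subspace of $\Fqm$ of dimension at most $m$), so $T_i(n,m;q) = \Fqmn$ and therefore $\mL_i(n,m;q) = \mL(\Fqmn)$ is again the full subspace lattice, which is supersolvable. I would state this observation explicitly, as it collapses case~(2) to the modular subspace lattice.

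\medskip

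The hard direction, and the main obstacle, is the converse: proving that when $2 \le i \le n-2$ \emph{and} $m > i$, the lattice is \emph{not} supersolvable. Here the full lattice is a proper sublattice, the meet is the twisted operation $\wedge_i$, and I would need to show that the modular elements cannot be assembled into a maximal chain. My plan is to first nail down exactly which elements $t \in \mL_i(n,m;q)$ are modular, by testing the equality in~\eqref{eq:semimodrel} against well-chosen atoms $\<v\>$ with $\rk(v) \le i$. The expected phenomenon (mirroring Bonin's obstruction) is that a subspace $t$ of intermediate rank $r$ with $2 \le r \le n-2$ fails modularity because one can find a low-rank vector $w$ lying in $t + \<v\>$ whose ``rank-$\le i$ part'' of the intersection $(t \wedge_i \<\ldots\>)$ is smaller than modularity demands — precisely because $m > i$ prevents certain sums of low-rank vectors from again being low-rank. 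Concretely I would exhibit, for each candidate modular $t$ of intermediate rank, an atom witnessing strict inequality, thereby showing every maximal chain must skip a rank level among the modular elements, contradicting the existence of a modular maximal chain.

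\medskip

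To organize the converse cleanly I would split it into subclaims: (a) the only modular elements of rank $1$ are all atoms, and of rank $n-1$ and $n$ are forced, but (b) \emph{no} element of rank $r$ with $2 \le r \le n-2$ is modular under the hypothesis $m > i \ge 2$. Subclaim (b) is the crux and the step I expect to be most delicate, since it requires a uniform construction of a modularity-violating pair $(s,t)$ that works for every such $t$; the argument will hinge on the interplay between $\rk$, the degree $m$ of the extension, and the threshold $i$, exploiting item~(4) of Proposition~\ref{prop:rk} and the triangular inequality to control the ranks of elements of $s \vee_i t$ and to show the meet $s \wedge_i t$ is deficient. Once (b) is in place, any maximal chain through the graded lattice must pass through every rank $0,1,\ldots,n$, but the ranks $2,\ldots,n-2$ contain no modular element, so no maximal chain of modular elements exists and $\mL_i(n,m;q)$ is not supersolvable. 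Combining the positive cases with this converse yields the stated equivalence.
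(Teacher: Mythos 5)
Your overall strategy --- first characterize the modular elements of $\mL_i(n,m;q)$, then build a maximal modular chain in the supersolvable cases and obstruct one otherwise --- matches the paper's, and your treatment of the positive direction is essentially right: for $m\le i$ the lattice collapses to the full subspace lattice, for $i=1$ it is the modular lattice of $\F_q$-subspaces of $\F_q^n$, and for $i\in\{n-1,n\}$ (and for $m\le i$) the coordinate chain $0\cov\<e_1\>_{\F_{q^m}}\cov\cdots\cov\<e_1,\ldots,e_n\>_{\F_{q^m}}$ works. One small imprecision: what you need for $\<e_1,\ldots,e_k\>_{\F_{q^m}}$ is not that it is \emph{spanned} by vectors of small rank but that \emph{every} element of it has rank at most $k$, which follows from the triangular inequality; the characterization you defer to (Theorem~\ref{thm:modLi} in the paper) is that for $i\ge 2$ an element $X$ is modular if and only if $X\subseteq T_i(n,m;q)$ or $X=\F_{q^m}^n$.

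The converse, however, contains a genuine error. Your subclaim (b) --- that under $m>i\ge 2$ \emph{no} element of rank $r$ with $2\le r\le n-2$ is modular --- is false. For example, $\<e_1,e_2\>_{\F_{q^m}}$ has rank $2$ and all of its elements have rank at most $2\le i$, so it is contained in $T_i(n,m;q)$ and is modular; more generally every coordinate subspace $\<e_1,\ldots,e_r\>_{\F_{q^m}}$ with $r\le i$ is a modular element of intermediate rank. The correct obstruction, which your plan does not identify, is concentrated at the single rank level $i+1$: when $m\ge i+1$ and $2\le i\le n-2$, \emph{every} $(i+1)$-dimensional $X\in\mL_i(n,m;q)$ contains a vector of rank at least $i+1$ (take a generator matrix $G$ in reduced row echelon form and form $\sum_{s=1}^{i+1}\gamma_s G_s$ for $\gamma_1,\ldots,\gamma_{i+1}\in\F_{q^m}$ linearly independent over $\F_q$, which exist precisely because $m\ge i+1$), so $X\not\subseteq T_i(n,m;q)$; and since $i+1\le n-1$ we also have $X\ne\F_{q^m}^n$, hence $X$ is not modular. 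As every maximal chain in a graded lattice of rank $n$ must pass through rank $i+1$, no maximal chain of modular elements exists. Without this pinpointed argument (or a correct replacement for your subclaim (b)), the hard direction of your plan does not go through.
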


Note Theorem~\ref{thm:superLi} shows
that the parameter $q$ does not play any role in determining whether or not $\mL_i(n,m;q)$ is supersolvable.

\begin{remark}
The approaches developed in~\cite{bonin1993modular,dowling1973q} extends only in part from the Hamming to the rank metric,
because of technical differences between the two metrics that arise in the proofs.
In fact, 
while the second higher-weight Dowling lattice
$\mH_2(n;q)$ is supersolvable (it is  known as the $q$-analogue of the partition lattice), the second rank-metric lattice~$\mL_2(n,m;q)$ is \textit{not} supersolvable, as we will prove in this section.
Our main result is a complete characterization of the modular elements of $\mL_i(n,m;q)$,
from which Theorem~\ref{thm:superLi}
follows easily. The proof of the characterization is splitted across a number of preliminary results, which we will combine at the end of the section.
\end{remark}

We start with the 
rank-metric analogues of \cite[Lemmas~1.2 and~1.4]{bonin1993modular}. Their proofs extend easily from the Hamming-metric case.

\begin{lemma}
    \label{lem:carmod}
    Let $X \in \mL_i(n,m;q)$. Then $X$ is  modular if and only if $X\wedge_n Y=X\wedge_i Y$ for all $Y\in \mL_i(n,m;q)$. 
\end{lemma}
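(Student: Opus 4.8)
The plan is to prove the equivalence by unwinding the definition of modularity in terms of the rank function and exploiting the fact that, in $\mL_i(n,m;q)$, the join coincides with the ambient sum $X+Y$ while only the meet differs from the ambient lattice $\mL_n(n,m;q)$. Recall from Definition~\ref{def:modular} that $X$ is modular precisely when
\begin{equation*}
\rho(X\wedge_i Y)+\rho(X\vee_i Y)=\rho(X)+\rho(Y)\quad\text{for all }Y\in\mL_i(n,m;q),
\end{equation*}
where $\rho$ is the $\F_{q^m}$-dimension. Since $X\vee_i Y=X+Y$ and dimension is modular in the full subspace lattice $\mL_n(n,m;q)$, we always have $\dim(X\wedge_n Y)+\dim(X+Y)=\dim X+\dim Y$. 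Subtracting, the modularity equality for $X$ in $\mL_i(n,m;q)$ becomes equivalent to $\dim(X\wedge_i Y)=\dim(X\wedge_n Y)$ for every $Y$.

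First I would record the containment $X\wedge_i Y\le X\wedge_n Y$, which is immediate because $X\wedge_i Y=\langle x\in X\cap Y:\rk(x)\le i\rangle$ is by construction a subspace of $X\cap Y=X\wedge_n Y$. Thus the two spaces are equal if and only if they have equal dimension. This turns the desired biconditional into: $X$ is modular $\iff$ $\dim(X\wedge_i Y)=\dim(X\wedge_n Y)$ for all $Y$ $\iff$ $X\wedge_i Y=X\wedge_n Y$ for all $Y$. So the core of the argument is just the dimension computation above combined with this containment. For the forward direction I would assume $X$ modular, invoke the ambient modular law to get the dimension equality, then upgrade it to set-theoretic equality via the containment. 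For the reverse direction I would assume $X\wedge_n Y=X\wedge_i Y$ for all $Y$, so in particular the dimensions agree, and then chain this with the ambient modular identity to recover equation~\eqref{eq:semimodrel} with equality, giving modularity of $X$.

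One subtlety to handle carefully is that $X\wedge_n Y$ must itself be an element of $\mL_i(n,m;q)$ for the meet $X\wedge_i Y$ inside the sublattice to be compared with it sensibly; but this is automatic once we know the dimensions agree, since equal-dimensional nested subspaces coincide, and then $X\wedge_n Y=X\wedge_i Y$ genuinely lies in $\mL_i(n,m;q)$. I would also make explicit that $X$ and $Y$ are genuine elements of $\mL_i(n,m;q)$, so that $X+Y$ — being a join of atoms of rank $\le i$ — remains in the sublattice, ensuring $\rho(X\vee_i Y)=\dim(X+Y)$ as needed.

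The main obstacle, such as it is, is conceptual rather than computational: one must be scrupulous about the distinction between the two meet operations $\wedge_i$ and $\wedge_n$ and verify that the join operation is genuinely insensitive to which lattice we work in. Once that bookkeeping is in place, the proof reduces to the modular law for dimensions in the full subspace lattice together with the trivial observation that a nested pair of subspaces of equal dimension must be equal. Because the excerpt states this lemma's proof ``extends easily from the Hamming-metric case,'' I expect the argument to be short, and the only real care needed is in the handling of the sublattice meet versus the ambient intersection.
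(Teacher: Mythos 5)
Your proposal is correct and follows essentially the same route as the paper's proof: rewrite modularity as the dimension identity $\dim(X\wedge_i Y)=\dim X+\dim Y-\dim(X\vee_i Y)$, use $X\vee_i Y=X+Y$ together with the modular law for dimensions in the ambient subspace lattice to identify the right-hand side with $\dim(X\wedge_n Y)$, and conclude via the containment $X\wedge_i Y\le X\wedge_n Y$. The extra bookkeeping you flag about the sublattice meet versus the ambient intersection is handled in the paper by exactly the same observations.
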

\begin{proof}
    Recall that, by definition of modularity, $X$ is a modular element of $\mL_i(n,m;q)$ if and only if,
    for all $Y\in \mL_i(n,m;q)$,
    \begin{equation}
    \label{eq:carmod}
        \dim(X\wedge_i Y)=\dim(X)+\dim(Y)-\dim(X\vee_i Y).
    \end{equation}
    Since $X\vee_i Y=X\vee_n Y$, we have that the right hand side of \eqref{eq:carmod} is $X\wedge_n Y$. It follows that~$X$ is modular in $\mL_1(n,m;q)$ if and only if $\dim(X\wedge_i Y)=\dim(X\wedge_n Y)$. The statement now follows from the fact that $X\wedge_i Y\leq X\wedge_n Y$.
\end{proof}

\begin{lemma}
\label{lem:x+y}
    Let $X$ be a modular element of $\mL_i(n,m;q)$ and let $x\in X\setminus T_i(n,m;q)$. If $y\in T_i(n,m;q)$ and $x+y\in T_i(n,m;q)$, then $y\in X$. 
\end{lemma}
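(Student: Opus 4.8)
The plan is to run the argument through the characterization of modularity provided by Lemma~\ref{lem:carmod}, which says that $X$ is modular precisely when $X\wedge_n Y=X\wedge_i Y$ for every $Y\in\mL_i(n,m;q)$. Recalling from Remark~\ref{rem:Li} that $X\wedge_n Y=X\cap Y$ while $X\wedge_i Y=\<z\in X\cap Y:\rk(z)\leq i\>_{\Fqm}$, modularity of $X$ amounts to the statement that every intersection $X\cap Y$ is spanned by its vectors of rank at most $i$. The whole strategy is then to feed into this criterion the single well-chosen test space $Y:=\<y,\,x+y\>_{\Fqm}$ built from the two given low-rank vectors, and to show that the intersection $X\cap Y$ is forced to be all of $Y$, which yields $y\in Y\le X$.

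First I would verify that $Y$ is a bona fide element of $\mL_i(n,m;q)$ and pin down its dimension. It lies in the lattice because both generators $y$ and $x+y$ belong to $T_i(n,m;q)$ by hypothesis. We may assume $y\neq 0$, since otherwise $y=0\in X$ trivially. I then claim $\dim(Y)=2$: note $x=(x+y)-y\in Y$ and $x\neq 0$ (as $\rk(x)>i\geq 1$), so if $Y$ were a line it would equal $\<x\>_{\Fqm}$, forcing $y\in\<x\>_{\Fqm}$ and hence, by Proposition~\ref{prop:rk}(2), either $y=0$ or $\rk(y)=\rk(x)>i$; both contradict our standing assumptions. Thus $\dim(Y)=2$ and $x\in X\cap Y$, so $\dim(X\cap Y)\in\{1,2\}$.

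The crux is to exclude $\dim(X\cap Y)=1$, for then $\dim(X\cap Y)=2$ gives $Y\le X$ and in particular $y\in X$. Suppose $\dim(X\cap Y)=1$. Since $x\in X\cap Y$ is nonzero, we get $X\cap Y=\<x\>_{\Fqm}$, and every nonzero vector on this line is a scalar multiple of $x$, hence of rank $\rk(x)>i$ by Proposition~\ref{prop:rk}(2). Therefore $X\wedge_i Y=\<z\in X\cap Y:\rk(z)\leq i\>_{\Fqm}=\{0\}$, whereas $X\wedge_n Y=X\cap Y=\<x\>_{\Fqm}\neq\{0\}$. This contradicts the modularity of $X$ through Lemma~\ref{lem:carmod}, so the case $\dim(X\cap Y)=1$ cannot occur.

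This proof is short, and I do not anticipate a deep obstacle; its entire content is the selection of the test space $Y=\<y,x+y\>_{\Fqm}$, which is exactly the subspace containing both the low-rank datum $y$ and the high-rank element $x$. The two points that genuinely require the hypothesis $\rk(x)>i$ are the verification that $\dim(Y)=2$ (so that $Y$ is large enough to ``see'' $x$ and not collapse onto $\<y\>_{\Fqm}$) and the exclusion of the one-dimensional intersection (where the rank-invariance of scalar multiples shows the line $\<x\>_{\Fqm}$ carries no nonzero low-rank vector). In both places Proposition~\ref{prop:rk}(2) does the work, and everything else is the direct application of the modularity criterion.
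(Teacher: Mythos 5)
Your proof is correct and follows essentially the same route as the paper: the same test space $Y=\langle y,\,x+y\rangle_{\F_{q^m}}$ is fed into the modularity criterion of Lemma~\ref{lem:carmod}, with the one-dimensional intersection $\langle x\rangle_{\F_{q^m}}$ carrying no nonzero vector of rank at most $i$. Your version merely spells out a few details the paper leaves implicit (the case $y=0$ and the verification that $\dim(Y)=2$), which is fine.
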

\begin{proof}
    We have $\rk(x)\geq i+1$, $\rk(y) \le i$, and $\rk(x+y)\leq i$. Define the lattice element $Y:=\<x+y,y\>_{\Fqm}\in\mL_i(n,m;q)$ and assume toward a contradiction that $y\notin X$.
    Since $y \in Y \setminus X$, we have 
    $X\wedge_n Y=\<x\>_{\Fqm}$.
    Since $x \notin T_i(n,m;q)$, we have $X\wedge_i Y =\{0\}$.
    Therefore $X$ is not modular by Lemma~\ref{lem:carmod}. 
\end{proof}

We will establish Theorem~\ref{thm:superLi} in various steps
and by treating the cases $i=1$, $i=2$, and $i\geq 3$ separately. 
While the general proof strategy is similar to that of~\cite{bonin1993modular}, the technical details are different and heavily rely on the properties of the rank weight.

\begin{proposition}
	\label{prop:L1modular}
	$\mL_1(n,m;q)$ is isomorphic to the lattice of $\F_q$-subspaces of $\F_q^n$. In particular, $\mL_1(n,m;q)$ is a modular lattice.
\end{proposition}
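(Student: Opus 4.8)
The plan is to exhibit an explicit lattice isomorphism between $\mL_1(n,m;q)$ and the lattice $\mL(\F_q^n)$ of $\F_q$-subspaces of $\F_q^n$, and then invoke the well-known fact that $\mL(\F_q^n)$ is modular. First I would analyze what the atoms of $\mL_1(n,m;q)$ actually are. By definition, the atoms are the $1$-dimensional $\F_{q^m}$-subspaces $\langle v\rangle_{\F_{q^m}}$ generated by vectors $v$ with $\rk(v)\le 1$. A nonzero vector $v\in\F_{q^m}^n$ has rank $1$ precisely when all its entries lie in a common $1$-dimensional $\F_q$-subspace of $\F_{q^m}$, i.e. $v=\gamma u$ for some $\gamma\in\F_{q^m}\setminus\{0\}$ and some $u\in\F_q^n\setminus\{0\}$. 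Using part (2) of Proposition~\ref{prop:rk} (scaling invariance of rank), $\langle v\rangle_{\F_{q^m}}=\langle u\rangle_{\F_{q^m}}$, so every atom of $\mL_1(n,m;q)$ has the form $\langle u\rangle_{\F_{q^m}}$ with $u\in\F_q^n$.

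Next I would define the candidate isomorphism. The natural map is
\begin{equation*}
\varphi:\mL(\F_q^n)\longrightarrow \mL_1(n,m;q),\qquad U\longmapsto \langle U\rangle_{\F_{q^m}}=U\otimes_{\F_q}\F_{q^m},
\end{equation*}
sending an $\F_q$-subspace $U\le\F_q^n$ to the $\F_{q^m}$-span of $U$ inside $\F_{q^m}^n$. I would first check $\varphi$ is well-defined, i.e. that $\langle U\rangle_{\F_{q^m}}$ really lies in $\mL_1(n,m;q)$: if $U$ has $\F_q$-basis $u_1,\dots,u_\ell$, then each $u_j$ has rank at most $1$, so $\langle U\rangle_{\F_{q^m}}=\langle u_1,\dots,u_\ell\rangle_{\F_{q^m}}$ is by construction a join of atoms, hence an element of $\mL_1(n,m;q)$. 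The key algebraic facts needed are that $\dim_{\F_{q^m}}\langle U\rangle_{\F_{q^m}}=\dim_{\F_q}U$ (an $\F_q$-basis of $U$ remains $\F_{q^m}$-linearly independent, since linear independence over a subfield is preserved under extension of scalars) and that $\langle U\rangle_{\F_{q^m}}\cap\F_q^n=U$ (the $\F_q$-rational points of the extended space recover $U$).

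I would then establish that $\varphi$ is an order-preserving bijection with order-preserving inverse. Injectivity follows from $\langle U\rangle_{\F_{q^m}}\cap\F_q^n=U$. For surjectivity, take any $X\in\mL_1(n,m;q)$; by the atom analysis above $X$ is spanned over $\F_{q^m}$ by vectors $u_1,\dots,u_\ell\in\F_q^n$, so $X=\varphi(U)$ with $U=\langle u_1,\dots,u_\ell\rangle_{\F_q}$. That $\varphi$ and $\varphi^{-1}$ both preserve inclusion is clear, so $\varphi$ is a poset isomorphism; since meets and joins are determined by the order, it is a lattice isomorphism. Finally, $\mL(\F_q^n)$ is modular because the dimension formula $\dim(U\cap W)+\dim(U+W)=\dim U+\dim W$ holds with equality for all subspaces (and the rank function is precisely $\dim_{\F_q}$), so equality holds in \eqref{eq:semimodrel} for every pair, making every element—and hence the whole lattice—modular. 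Transporting this through $\varphi$ gives modularity of $\mL_1(n,m;q)$.

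The main obstacle is the verification that $\varphi$ respects the meet operation, since the meet in $\mL_1(n,m;q)$ is \emph{not} the ordinary intersection but rather $X\wedge_1 Y=\langle x\in X\cap Y:\rk(x)\le 1\rangle$, as recorded in Remark~\ref{rem:Li}. I expect this to require showing that for $X=\langle U\rangle_{\F_{q^m}}$ and $Y=\langle W\rangle_{\F_{q^m}}$ the rank-one vectors inside $X\cap Y$ already span $\langle U\cap W\rangle_{\F_{q^m}}$; equivalently, that no ``new'' rank-one directions appear in the $\F_{q^m}$-intersection beyond those coming from $U\cap W$ over $\F_q$. This hinges on the identity $X\cap Y=\langle U\cap W\rangle_{\F_{q^m}}$, which is the statement that extension of scalars from $\F_q$ to $\F_{q^m}$ commutes with intersection of subspaces—a standard but nontrivial fact that I would prove by a dimension count or a basis-completion argument. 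Once this is in hand the modularity conclusion is immediate, so the conceptual weight of the proposition lies entirely in pinning down this compatibility of scalar extension with intersection.
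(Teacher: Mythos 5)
Your proof is correct and follows essentially the same route as the paper: the same extension-of-scalars map $U \mapsto \langle U\rangle_{\F_{q^m}}$, with surjectivity established by writing a rank-one basis of $X \in \mL_1(n,m;q)$ as $\F_{q^m}$-scalar multiples of vectors in $\F_q^n$, and modularity then inherited from the subspace lattice of $\F_q^n$. The only remark worth making is that your final paragraph's concern about the meet operation is unnecessary: once $\varphi$ is shown to be an order isomorphism between the two lattices, meets and joins are automatically preserved (they are determined by the order alone), so the compatibility of scalar extension with intersection, i.e.\ $X\cap Y=\langle U\cap W\rangle_{\F_{q^m}}$, need not be verified at all.
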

\begin{proof}
We will construct a poset isomorphism
$f: (\mL,\le) \longrightarrow \mL_1(n,m;q)$,
where $(\mL,\le)$ is the lattice of $\F_q$-subspaces of $\F_q^n$ ordered by inclusion.
Define $f(U):=\< U \>_{\Fqm} =U \otimes_{\F_q} \F_{q^m}$ for all $U \in \mL$.
It is easy to see that $f$ is order preserving and injective. Therefore to conclude the proof it suffices to prove that $f$ is surjective.
To see this, let $X \in \mL_1(n,m;q)$  and let $\{v_1,\ldots,v_j\}$ be an $\F_{q^m}$-basis of $X$ made of vectors of rank 1. Write 
$v_\ell =\lambda_\ell w_\ell'$ for $\ell \in \{1,\ldots,j\}$, where $\lambda_\ell \in \F_{q^m} \setminus \{0\}$ and 
$w_\ell \in \F_q^n$.
Then $X=U \otimes_{\F_q} \F_{q^m}=f(U)$, where~$U = \< w_1,\ldots,w_j \>_{\F_q}$, concluding the proof.
\end{proof}

\begin{remark} \label{rem:basis}
The proof of Proposition~\ref{prop:L1modular} shows that the elements of
  $\mL_1(n,m;q)$ are precisely those subspaces of $\F_{q^m}$ having a basis made of vectors with entries in $\F_q$. These are the optimal rank-metric anticodes; see~\cite[Section~4]{ravagnani2016generalized}. 
\end{remark}

The following lemma summarizes some properties of the rank of a vector that will be crucial in our approach. These are mainly inherited from the correspondence between vector rank and matrix rank illustrated in Section~\ref{sec:rml}. They will be applied in several instances throughout this section, without referring to them explicitly, to avoid making the exposition too cumbersome.

\begin{lemma}
\label{lem:sumrkj}
Let $v \in \F_{q^m}^n$ be a non-zero vector. Then $\rk(v)$ is the smallest $r$ for which there exist rank-1 vectors $v_1,\ldots,v_r\in\Fqmn$ with $v=v_1+\ldots+v_r$.
Furthermore, any such rank-1 vectors $v_1,\ldots,v_r$ are linearly independent over $\F_{q^m}$. Finally, for every $S \subseteq \{1,\ldots,r\}$ we have
that $\sum_{j \in S} v_j$ has rank $|S|$.
\end{lemma}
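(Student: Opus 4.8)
The plan is to argue directly from the definition of the rank of a vector as the $\Fq$-dimension of the $\Fq$-span of its entries, using only the scalar-invariance and subadditivity recorded in Proposition~\ref{prop:rk}, and to treat the three assertions in the order stated. For the first claim, set $r:=\rk(v)$. The lower bound is immediate: if $v=v_1+\cdots+v_s$ with each $v_j$ of rank $1$, then iterating the triangular inequality (Proposition~\ref{prop:rk}(3)) gives $\rk(v)\le\sum_{j=1}^s\rk(v_j)=s$, so any such decomposition uses at least $r$ summands. For the reverse inequality I would exhibit a decomposition with exactly $r$ summands: choose an $\Fq$-basis $\beta_1,\ldots,\beta_r$ of the $\Fq$-span $V$ of the entries of $v$, expand each entry as an $\Fq$-combination of the $\beta_j$, and collect coefficients to obtain $v=\sum_{j=1}^r\beta_j u_j$ with $u_j\in\Fqn$. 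Each $\beta_j u_j$ has all entries in $\Fq\beta_j$, hence rank at most $1$; and rank exactly $1$ because if some $u_j$ vanished then all entries of $v$ would lie in the span of the remaining $\beta_\ell$, contradicting $\dim_{\Fq}V=r$. This exhibits $v$ as a sum of $r$ rank-$1$ vectors and identifies $\rk(v)$ with the claimed minimum.

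For the second claim, suppose a minimal decomposition $v=v_1+\cdots+v_r$ (so that $r=\rk(v)$) admitted a nontrivial $\Fqm$-linear dependence among the $v_j$. Then some $v_k$ is an $\Fqm$-combination of the others; substituting this expression into $v=\sum_j v_j$ rewrites $v$ as an $\Fqm$-linear combination $\sum_{j\ne k}c_j v_j$ of the remaining $r-1$ summands. By scalar invariance (Proposition~\ref{prop:rk}(2)) each nonzero $c_j v_j$ still has rank $1$, so after discarding any vanishing terms $v$ becomes a sum of at most $r-1$ rank-$1$ vectors, contradicting the minimality established in the first claim. Hence $v_1,\ldots,v_r$ are linearly independent over $\Fqm$.

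For the third claim, fix $S\subseteq\{1,\ldots,r\}$ and set $w:=\sum_{j\in S}v_j$ and $w':=v-w=\sum_{j\notin S}v_j$. The key step is a sandwiching argument: subadditivity gives $\rk(w)\le|S|$ and $\rk(w')\le r-|S|$, while the triangular inequality applied to $v=w+w'$ gives $r=\rk(v)\le\rk(w)+\rk(w')$. Chaining these yields $r\le\rk(w)+\rk(w')\le|S|+(r-|S|)=r$, so every inequality is forced to be an equality, and in particular $\rk(w)=|S|$. I expect this to be the only genuinely nontrivial point: the lower bound $\rk(w)\ge|S|$ is not visible from a direct computation on $w$ alone, but drops out cleanly once one simultaneously controls the complementary sum $w'$ and invokes the minimality $\rk(v)=r$ from the first claim.
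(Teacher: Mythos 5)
Your proof is correct and takes essentially the same route as the paper: the second and third claims are proved by the same dependence-substitution argument and the same sandwich/contradiction on $\rk(w)+\rk(w')$, respectively. The only difference is in the first claim, where the paper simply cites the rank-preserving isomorphism $v\mapsto\Gamma(v)$ and the corresponding decomposition fact for matrices, whereas you construct the rank-$r$ decomposition explicitly from an $\F_q$-basis of the span of the entries of $v$ --- a harmless and arguably more self-contained unpacking of the same fact.
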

\begin{proof}
The first part of the statement follows from the fact that $v \mapsto \Gamma(v)$ is a rank-preserving $\F_q$-linear isomorphism (for any basis $\Gamma$). 

For the second part, suppose towards a contradiction that the rank-$1$ vectors $v_1, \ldots,v_r$ are not linearly independent over $\Fqm$ and let $\lambda_1,\ldots,\lambda_{r-1}\in\Fqm$ such that (without loss of generality) $v_r=\lambda_1 v_1+\cdots+\lambda_{r-1}v_{r-1}$. We then have
    \begin{equation*}
        v=v_1+\cdots+v_r=(1+\lambda_1)v_1+\cdots+(1+\lambda_{r-1})v_{r-1},
    \end{equation*}
    which implies $\rk(v)\leq r-1$ by the triangular inequality; see Proposition~\ref{prop:rk}. This is a contradiction. 

In order to prove the last part of the statement, let $S \subseteq \{1,\ldots,r\}$ and $w:=\sum_{j \in S} v_j$. Suppose 
     towards a contradiction that $\rk(w)<|S|$. Define the vector $u:=\sum_{j\notin S} v_j$ and observe that $\rk(u)\leq r-|S|$, since it is the sum of $r-|S|$ rank-$1$ vectors. By the triangle inequality we have
    \begin{equation*}
        r=\rk(v)=\rk(w+u)\leq \rk(w)+\rk(u)< |S|+r-|S|=r,
    \end{equation*}
    which is a contradiction.
\end{proof}

The following two lemmas are devoted to the case $i=2$. Note that the proof techniques do not apply to the Hamming-metric case. In fact, the Hamming-metric analogues of the following two lemmas are false; see~\cite{dowling1973q}.

\begin{lemma}
\label{lem:rk3}
    Let $X$ be a modular element of $\mL_2(n,m;q)$. If $X$ contains an element of rank $3$, then $X=\Fqmn$.
\end{lemma}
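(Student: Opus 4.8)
The plan is to use the rank-$3$ element of $X$ as an engine that, through repeated applications of Lemma~\ref{lem:x+y}, forces more and more low-rank vectors into $X$ until a full $\Fqm$-basis of $\Fqmn$ has been captured.

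First I would fix some $x\in X$ with $\rk(x)=3$ and invoke Lemma~\ref{lem:sumrkj} to write $x=v_1+v_2+v_3$ with each $v_j$ of rank $1$; concretely $v_j=\lambda_j w_j$ with $\lambda_j\in\Fqm\setminus\{0\}$ and $w_j\in\Fqn$. Since $\rk(x)=3>2$, we have $x\in X\setminus T_2(n,m;q)$, so Lemma~\ref{lem:x+y} is applicable with $x$ in its stated role. Applying it with $y=-v_j$ (which has rank $1$, while $x-v_j$ is a sum of two rank-$1$ vectors and hence has rank at most $2$) deposits each summand into $X$; in particular $v_2,v_3\in X$.

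The heart of the argument is the next step, where I would pump an arbitrary direction into $X$. For a fixed $w\in\Fqn$ set $u:=\lambda_2 w$ and $y:=u-v_2-v_3$. The point of choosing the scalar $\lambda_2$ (the same scalar as that of $v_2$) is that $y=\lambda_2(w-w_2)-\lambda_3 w_3$ is visibly a sum of two rank-$1$ vectors, so $\rk(y)\le 2$; simultaneously $x+y=v_1+u=\lambda_1 w_1+\lambda_2 w$ is again a sum of two rank-$1$ vectors, so $\rk(x+y)\le 2$. Lemma~\ref{lem:x+y} then yields $y\in X$, and since $v_2,v_3\in X$ we conclude $u=\lambda_2 w\in X$. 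As $w$ was arbitrary, $\lambda_2 w\in X$ for every $w\in\Fqn$; taking $w=e_1,\ldots,e_n$ exhibits the $\Fqm$-basis $\lambda_2 e_1,\ldots,\lambda_2 e_n$ of $\Fqmn$ inside $X$ (here $\lambda_2\neq 0$ is essential), whence $X=\Fqmn$.

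I expect the only genuine obstacle to be locating the right auxiliary vector $y$ in the pumping step: a naive choice $u=\mu w$ with an unrelated scalar $\mu$ makes $y=\mu w-v_2-v_3$ potentially of rank $3$, so that Lemma~\ref{lem:x+y} fails to apply. Locking the scalar to $\lambda_2$ is precisely what keeps both $\rk(y)$ and $\rk(x+y)$ bounded by $2$ simultaneously, for every $w$ at once, and this is the single idea on which the proof turns. (Note that the statement is non-vacuous only when $m\ge 3$, since $\rk(x)\le\min\{n,m\}$, but this requires no separate treatment: the argument above never uses $m=3$ specifically.)
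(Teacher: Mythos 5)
Your proof is correct and follows essentially the same route as the paper's: decompose $x$ into rank-$1$ summands, push them into $X$ via Lemma~\ref{lem:x+y}, and then exploit an auxiliary vector whose scalar is locked to one of the $\lambda_j$'s so that both it and its sum with $x$ remain in $T_2(n,m;q)$. The only (cosmetic) difference is that you pump an arbitrary $w\in\Fqn$ directly, which lets you skip the paper's case distinction on $n$ and the completion of $\{x_1,x_2,x_3\}$ to a basis of rank-$1$ vectors.
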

\begin{proof}
    Suppose that $X$ has an element of rank~$3$, say~$x$. By Lemma~\ref{lem:sumrkj}, there exist $x_1,x_2,x_3\in\Fqmn$ of rank $1$, linearly independent over~$\Fqm$, with $x=x_1+x_2+x_3$. Define $y:=x_1+x_2$ and note that $x-y=x_3$. We have $y,x_3\in T_2(n,m;q)$, which implies that $y,x_3\in X$, by Lemma~\ref{lem:x+y}. Analogously, $x_1,x_2\in X$. If $n=3$, then this immediately shows that $X=\Fqm^3$, because $x_1$, $x_2$ and $x_3$ are linearly independent over~$\F_{q^m}$. 
    
    We assume $n\geq 4$ in the remainder of the proof.  We complete $\{x_1,x_2,x_3\}$ to a basis of~$\Fqmn$ by selecting vectors $x_4,\ldots,x_n\in\Fqmn$ of rank $1$ (observe that is always possible because the rank-$1$ vectors of $\Fqmn$ span $\Fqmn$).  For all $j\in\{1,\ldots,n\}$, let $\lambda_j\in\Fqm \setminus \{0\}$ such that $x_j=\lambda_j y_j$, for some $y_j\in\Fqn \setminus \{0\}$. Note that the $\lambda_j$'s exist because $\rk(x_j)=1$ for all $j\in\{1,\ldots,n\}$. Define the vector $z:=x_1+x_2-\lambda_1y_4$, so that 
    \begin{equation*}
        z=x_1+x_2-\lambda_1y_4=\lambda_1(y_1-y_4)+x_2, \qquad x-z=x_3+\lambda_1 y_4.
    \end{equation*}
    It follows that $z,x-z\in T_2(n,m;q)$, since they are sum of at most two vectors of rank $1$ as $y_1-y_4\in\Fqn$, and therefore $z \in X$ by Lemma~\ref{lem:x+y}. Note that we can re-write $x_4$ as 
    \begin{equation*}
       x_4= \lambda_4y_4=\lambda_1^{-1}\lambda_4(y-z),
    \end{equation*}
    which is a linear combination of  elements of $X$. Therefore $x_4 \in X$ as well. Analogously, one can prove that $x_5,\ldots,x_n\in X$, which in turn implies that $X=\Fqmn$.
\end{proof}

\begin{lemma}
\label{lem:rk4}
    Let $X\in\mL_2(n,m;q)\setminus\{\Fqmn\}$. If $X$ contains an element of rank $4$, then $X$ is not modular.
\end{lemma}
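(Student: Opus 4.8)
The plan is to argue by contradiction: assume $X$ is modular, produce an element of rank $3$ inside $X$, and then invoke Lemma~\ref{lem:rk3} to conclude $X=\F_{q^m}^n$, contradicting $X\neq \F_{q^m}^n$. The whole difficulty is thus reduced to showing that a modular $X$ containing a rank-$4$ element must contain a rank-$3$ element.

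First I would fix a rank-$4$ element $x\in X$ and, using Lemma~\ref{lem:sumrkj}, write $x=x_1+x_2+x_3+x_4$ with the $x_j$ of rank $1$ and $\F_{q^m}$-linearly independent; I would then write $x_j=\mu_j y_j$ with $\mu_j\in\F_{q^m}\setminus\{0\}$ and $y_j\in\F_q^n$. A short argument through the matrix model of Section~\ref{sec:rml} shows that both $\{y_1,\dots,y_4\}$ and $\{\mu_1,\dots,\mu_4\}$ are $\F_q$-linearly independent, as otherwise the column (resp.\ row) space of $\Gamma(x)$ would force $\rk(x)<4$. Since $x_1+x_2$ and $x_3+x_4$ have rank $2$ by Lemma~\ref{lem:sumrkj} and $x\in X\setminus T_2(n,m;q)$, applying Lemma~\ref{lem:x+y} with $y=-(x_1+x_2)$ (using $\rk(-v)=\rk(v)$ from Proposition~\ref{prop:rk}) yields $x_1+x_2\in X$.

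The crux is to manufacture a single rank-$1$ vector lying in $X$; from it I can peel one summand off $x$ and land on rank $3$. To this end I would introduce the auxiliary vector $z:=\mu_1 y_1+\mu_2 y_2+\mu_2 y_3$, that is, $x_1+x_2$ with an extra copy of the coefficient $\mu_2$ placed on the direction $y_3$. Because $y_1$ and $y_2+y_3$ are $\F_q$-independent (hence $\F_{q^m}$-independent) and $\mu_1,\mu_2$ are $\F_q$-independent, Lemma~\ref{lem:sumrkj} gives $\rk(z)=2$; and $x-z=(\mu_3-\mu_2)y_3+\mu_4 y_4$ also has rank $2$, since $\mu_3-\mu_2$ and $\mu_4$ are $\F_q$-independent. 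Thus $z,\,x-z\in T_2(n,m;q)$, and Lemma~\ref{lem:x+y} (with $y=-z$) gives $z\in X$. Subtracting the element $x_1+x_2\in X$ already obtained yields $z-(x_1+x_2)=\mu_2 y_3\in X$, a vector of rank $1$; rescaling by $\mu_3\mu_2^{-1}\in\F_{q^m}$ then shows $x_3\in X$. Consequently $x-x_3=x_1+x_2+x_4\in X$ has rank $3$ by Lemma~\ref{lem:sumrkj}, and Lemma~\ref{lem:rk3} forces $X=\F_{q^m}^n$, the desired contradiction.

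The main obstacle is precisely the choice of $z$: one must reuse a single coefficient (here $\mu_2$) across two distinct $\F_q$-directions so that the two rank-$2$ conditions required to invoke Lemma~\ref{lem:x+y} hold simultaneously, while the difference with an already-recovered rank-$2$ element of $X$ collapses to a single rank-$1$ direction. I expect the verification that $z$ and $x-z$ both have rank exactly $2$ to be the only delicate computation, and I would carry it out entirely through the $\F_q$-linear independence of the $\mu_j$ and of the $y_j$, so that it is valid in every characteristic. It is worth stressing that the naive alternative — averaging the pairwise sums $x_i+x_j$ (all of which one can place in $X$) to recover the individual $x_j$ — fails in characteristic $2$, which is exactly what makes the direction-bookkeeping construction above necessary.
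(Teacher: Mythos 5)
Your proof is correct, but it follows a genuinely different route from the paper's in the essential case. The paper splits on whether some rank-one summand $x_j$ already lies in $X$: if so, it reduces to Lemma~\ref{lem:rk3} exactly as you do; if not, it does \emph{not} try to produce a rank-$3$ element, but instead builds an explicit two-dimensional witness $Y=\<z_1,z_2\>_{\Fqm}\in\mL_2(n,m;q)$ with $z_1=x_1+x_2+\lambda_3y_1$ and $z_2=x_3+x_4-\lambda_3y_1$, for which $X\wedge_n Y=\<x\>_{\Fqm}$ but $X\wedge_2 Y=0$, contradicting the modularity criterion of Lemma~\ref{lem:carmod} directly. You instead eliminate the case split altogether: your vector $z=x_1+x_2+\mu_2y_3$, which reuses the coefficient $\mu_2$ on the direction $y_3$, forces $\mu_2y_3\in X$ and hence $x_3\in X$, after which Lemma~\ref{lem:rk3} finishes the job. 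The price you pay is the extra (correct) observation that the $\F_q$-linear independence of $\{y_1,\ldots,y_4\}$ and of $\{\mu_1,\ldots,\mu_4\}$ follows from $\rk(x)=4$ via the matrix model; the paper's argument never needs this. What your approach buys is uniformity (no case distinction) and a conclusion that is in some sense stronger -- you show a modular $X$ containing a rank-$4$ vector must contain one of its rank-one constituents -- while the paper's approach is more self-contained in the hard case, exhibiting the failure of modularity by a direct witness rather than routing through Lemma~\ref{lem:rk3}. Two minor points: you claim $\rk(z)=\rk(x-z)=2$ exactly, whereas only $\rk\le 2$ is needed (and is what Lemma~\ref{lem:x+y} requires); and your closing remark about characteristic $2$ correctly identifies why the naive recovery of the $x_j$ from pairwise sums fails, which is indeed the obstacle both constructions are designed to circumvent.
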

\begin{proof}
     Suppose that $X$ has an element of rank $4$, say $x$. By Lemma~\ref{lem:sumrkj}, there exist $x_1,x_2,x_3,x_4\in\Fqmn$ of rank $1$, linearly independent over $\Fqm$, that sum to $x$. We examine two cases.
     \begin{itemize}
         \item Suppose that $x_j\in X$ for some $j\in\{1,2,3,4\}$. We have that $x-x_j$ is an element of~$X$ of rank~$3$ and therefore $X$ cannot be modular by Lemma~\ref{lem:rk3}.
         \item Suppose $x_1,x_2,x_3,x_4\notin X$ and assume towards a contradiction that $X$ is modular. Define $y:=x_1+x_2$ and note that $y,x-y\in T_2(n,m;q)$. Therefore $y\in X$ by Lemma~\ref{lem:x+y}.
         For all $j\in\{1,2,3,4\}$, take $\lambda_j\in\Fqm\setminus\{0\}$ and $y_j\in\Fqn\setminus\{0\}$ such that $x_j=\lambda_j y_j$, and define 
         \begin{equation*}
            z_1:=x_1+x_2+\lambda_3y_1,\qquad z_2:=x_3+x_4-\lambda_3y_1,\qquad Y:=\<z_1,z_2\>_{\F_{q^m}}.
         \end{equation*}
         Observe that
         \begin{equation*}
             z_1=(\lambda_1+\lambda_3)y_1+x_2, \qquad  z_2=\lambda_3(y_3-y_1)+x_4.
         \end{equation*}
         Since $y_1$, $x_2$, $y_3-y_1$ and $x_4$ all have rank at most 1,
         we have $Y \in \mL_2(n,m;q)$.
         We have $z_1\notin X$, otherwise $z_1-y=\lambda_3 y_1\in X$ and hence $\lambda_1 y_1=x_1\in X$ (since $X$ is a linear space), which is false by assumption. Since $x=z_1+z_2\in Y$ and $Y$ has dimension at most $2$, we have $X\wedge_n Y=\<x\>_{\Fq}$. On the other hand, $x\notin T_2(n,m;q)$ because $\rk(x)=4$, and therefore $X\wedge_2 Y=\<0\>_{\Fq}$. This implies that $X$ is not modular by Lemma \ref{lem:carmod}.  \qedhere
    \end{itemize}
\end{proof}

We next characterize the modular elements of the lattice $\mL_i(n,m;q)$ for $i\geq 3$. Again, the proofs of the following four lemmas heavily use the properties of the rank weight stated in Lemma~\ref{lem:sumrkj}. We start with two results that, technically, only require $i \ge 2$, but that we will need in the case $i \ge 3$ only.

\begin{lemma}
\label{lem:couplediff}
Let $i\geq 2$, $j\in\{1,\ldots,i\}$, and let $X$ be a modular element of $\mL_i(n,m;q)$. Let 
$x_1,\ldots,x_{i+j}\in\Fqmn$ be of rank $1$.
Suppose that 
$x:=x_1+\ldots+x_{i+j} \in X$ and that
$\rk(x) \ge i+1$. Then $x_s-x_p\in X$ for all $s,p \in \{1,\ldots,i+j\}$. 
\end{lemma}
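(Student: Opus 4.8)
The plan is to reduce everything to a single application of Lemma~\ref{lem:x+y}, which I would first repackage as a \emph{splitting principle}: if $X$ is modular, $x\in X$ with $\rk(x)\ge i+1$, and $x=v+w$ with $\rk(v)\le i$ and $\rk(w)\le i$, then \emph{both} $v$ and $w$ lie in $X$. To see this I would set $y:=-w$; then $y\in T_i(n,m;q)$ and $x+y=v\in T_i(n,m;q)$, so Lemma~\ref{lem:x+y} gives $-w=y\in X$, whence $w\in X$ and $v=x-w\in X$ since $X$ is an $\Fqm$-subspace. Note the hypothesis $\rk(x)\ge i+1$ is exactly what guarantees $x\notin T_i(n,m;q)$, making $x$ a legitimate input to Lemma~\ref{lem:x+y}.

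With this in hand, the main step is the observation that for every subset $S\subseteq\{1,\dots,i+j\}$ with $|S|\le i$ and $|S^c|\le i$ the partial sum $x_S:=\sum_{t\in S}x_t$ belongs to $X$. Indeed, writing $x=x_S+x_{S^c}$ and noting that a sum of at most $i$ rank-$1$ vectors has rank at most $i$ by the triangular inequality (Proposition~\ref{prop:rk}), the splitting principle applies directly to yield $x_S\in X$ (and $x_{S^c}\in X$).

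To obtain $x_s-x_p$, where we may assume $s\ne p$ (the case $s=p$ being trivial), I would choose two such ``balanced'' subsets $S'$ and $S$ that differ in exactly the indices $s$ and $p$: take $S'$ with $p\in S'$, $s\notin S'$ and $j\le|S'|\le i$, and set $S:=(S'\setminus\{p\})\cup\{s\}$. Then $|S|=|S'|$, so $S$ is balanced as well, and $x_S-x_{S'}=x_s-x_p$. Since $x_S,x_{S'}\in X$ by the previous paragraph, the difference $x_s-x_p$ lies in $X$, as desired.

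The only point requiring care — and the main (mild) obstacle — is the feasibility of this choice of subsets, which is precisely where the hypotheses $j\le i$ and $i+j\ge i+1$ enter. The constraint $|S'^c|\le i$ forces $|S'|\ge j$, while $|S'|\le i$ is the other constraint; these are compatible exactly because $j\le i$, and taking $|S'|=j$ works. Finally, a set $S'$ of size $j$ containing $p$ but avoiding $s$ exists because there remain $i+j-2\ge j-1$ indices from which to select the $j-1$ additional elements. This completes the plan.
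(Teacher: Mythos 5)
Your proof is correct and follows essentially the same route as the paper: both arguments use Lemma~\ref{lem:x+y} to place ``balanced'' partial sums of the $x_t$'s into $X$ (the paper picks the two specific size-$i$ sums $x_1+x_3+\cdots+x_{i+1}$ and $x_2+x_3+\cdots+x_{i+1}$, while you phrase it for arbitrary subsets $S$ with $|S|\le i$ and $|S^c|\le i$) and then obtain $x_s-x_p$ as the difference of two such sums differing only in the swapped index. Your explicit reduction of Lemma~\ref{lem:x+y} to the symmetric ``splitting principle'' via $y:=-w$ is a nice clarification of a step the paper leaves implicit, but it is not a different argument.
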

\begin{proof}
    The result is clear if $s=p$. We henceforth assume $s \neq p$, say 
    $(s,p)=(1,2)$
    without loss of generality.  Define $y:=x_1+x_3+\cdots+x_{i+1}$ and $z:=x_2+x_3+\cdots+x_{i+1}$. Observe that $y,z\in X$ by  Lemma~\ref{lem:x+y}, since $x\in X\setminus T_i(n,m;q)$ and  $z,y,x-y,x-z\in T_i(n,m;q)$, as they are sum of at most $i$ vectors of rank $1$ (we are using $j \le i$ here).   Finally, we have that $y-z=x_1-x_2\in X$ because it is sum of elements of $X$.
\end{proof}

\begin{lemma} \label{lll1}
Let $i\geq 2$ and let $X$ be a modular element of $\mL_i(n,m;q)$. If $X$ contains an element of rank strictly bigger than $i$, then it must contain an element of rank either $i+1$ or $i+2$.
\end{lemma}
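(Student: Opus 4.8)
The plan is to pick $x\in X$ of \emph{minimal} rank $r$ among all elements of $X$ of rank strictly greater than $i$, and to prove $r\le i+2$; since $r>i$ this forces $r\in\{i+1,i+2\}$, so that $x$ itself is the required element. Throughout I would fix a minimal rank decomposition $x=x_1+\cdots+x_r$ as in Lemma~\ref{lem:sumrkj}, the $x_j$ being rank-$1$ vectors. I will use repeatedly that every $\Fq$-linear combination $\sum_j c_jx_j$ has rank equal to the number of its nonzero coefficients: writing $\Gamma(x_j)=a_jb_j^\top$, minimality of the decomposition forces $\{a_j\}$ and $\{b_j\}$ to be $\Fq$-linearly independent, so $\Gamma(\sum_j c_jx_j)=\sum_j c_ja_jb_j^\top$ has rank equal to the size of the support of $(c_j)$. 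Now assume towards a contradiction that $r\ge i+3$.

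The first and main step is to show that $r\le 2i$. Here Lemma~\ref{lem:couplediff} is \emph{not} directly available, since it requires at most $2i$ summands; instead I would exploit modularity via Lemma~\ref{lem:carmod}. Taking $Y:=\<x_1,\ldots,x_r\>_{\Fqm}\in\mL_i(n,m;q)$ and using $X\wedge_n Y=X\cap Y$, modularity gives $X\cap Y=X\wedge_i Y$, so the rank-$\le i$ vectors of $X\cap Y\subseteq X$ span $X\cap Y\ni x$; hence $x=\sum_{k=1}^t c_kv_k$ for some $v_k\in X$ with $\rk(v_k)\le i$. Consider the partial sums $s_k:=\sum_{\ell\le k}c_\ell v_\ell\in X$. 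They start at $\rk(s_1)\le i$, end at $\rk(s_t)=r>i$, and by the triangle inequality of Proposition~\ref{prop:rk} consecutive ranks differ by at most $\rk(c_kv_k)\le i$. A discrete intermediate-value argument then produces an index $k^\ast$ with $i<\rk(s_{k^\ast})\le 2i$; since $s_{k^\ast}\in X$ and $r$ is minimal, we obtain $r\le 2i$. This step is the crux of the argument, as it is the only mechanism I see for descending below the rank threshold $2i$ imposed by Lemma~\ref{lem:couplediff}; everything else is comparatively routine.

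Combining $r\le 2i$ with the standing assumption $r\ge i+3$ already gives a contradiction when $i=2$, so it remains to treat $i\ge3$. In that range $i+3\le r\le 2i$, so Lemma~\ref{lem:couplediff} applies (with $j=r-i\in\{3,\ldots,i\}$) and places every difference $x_s-x_p$ in $X$; consequently $X$ contains the whole hyperplane $\{\sum_j c_jx_j:\sum_j c_j=0\}$, being spanned by these differences. I would then exhibit the explicit vector
\begin{equation*}
w:=x_1+\cdots+x_{i+1}-(i+1)\,x_{i+2},
\end{equation*}
which lies in that hyperplane, as its coefficients sum to $0$, and hence in $X$. By the rank rule recorded in the first paragraph, $\rk(w)$ equals the size of its support, namely $i+1$ if the characteristic $p$ divides $i+1$ and $i+2$ otherwise. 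In either case $i<\rk(w)\le i+2<r$, contradicting the minimality of $r$. This contradiction yields $r\le i+2$, and therefore $r\in\{i+1,i+2\}$, which proves the statement.
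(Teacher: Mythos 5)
Your proof is correct and follows essentially the same route as the paper's: first produce an element of $X$ of rank in $(i,2i]$ (the paper does this with the same partial-sum/triangle-inequality observation, using only that $X$ is spanned by its rank-$\le i$ vectors, so your detour through modularity and Lemma~\ref{lem:carmod} in that step is harmless but unnecessary), then apply Lemma~\ref{lem:couplediff} to place the differences $x_s-x_p$ in $X$ and exhibit an explicit small-support combination of rank $i+1$ or $i+2$. The only real difference is the final witness: the paper takes $x_1+\cdots+x_{i+2}-2x_{i+3}$ and runs a short case analysis on its possible ranks, whereas your $x_1+\cdots+x_{i+1}-(i+1)x_{i+2}$, combined with the exact rank computation from the factorization $\Gamma(x_j)=a_jb_j^{\top}$, pins the rank down directly.
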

\begin{proof}
Since $X$ contains an element of rank strictly bigger than $i$ and the sum of two vectors of rank $\le i$ has rank $\le 2i$, $X$ must contain an element $x$ of rank $i+j$, for some $1 \le j \le i$. If $j \in \{1,2\}$ then we are done. Now suppose $j \ge 3$ and write
$x=x_1+ \ldots +x_{i+j}$, where 
$x_1, \ldots x_{i+j} \in \F_{q^m}^n$ have rank 1 and are linearly independent over $\F_{q^m}$.
Let $y:=x_1 + \ldots +x_i$ and observe $y,x-y \in T_i(n,m;q)$ by Lemma~\ref{lem:sumrkj}. Therefore by Lemma~\ref{lem:x+y} we have that $y \in X$.
Let $z_1:=x_{i+1}-x_{i+3}$ and $z_2:=x_{i+2}-x_{i+3}$. By Lemma~\ref{lem:couplediff} we have $z_1,z_2 \in X$, since $X$ is modular. Now observe that
\begin{equation*}
    z:=y+z_1+z_2 = x_1+ \cdots + x_i + x_{i+1} + x_{i+2} -2x_{i+3}
\end{equation*}
has rank $i+1$, $i+2$, or $i+3$,
since by Proposition~\ref{prop:rk} we have
\begin{equation*}
     i+1=\rk(x_1+ \cdots + x_{i+2})-\rk(2x_{i+3})\leq \rk( z)\leq i+3,
\end{equation*}
where the last inequality from the fact that $z$ is a sum of at most $i+3$ vectors of rank $1$. If~$z$ has rank $i+1$ or $i+2$ then we are done. If $z$ has rank $i+3$, then $y+z_1 \in X$ must have rank $i+1$ or $i+2$, since $z_2$ has rank $1$ or $2$ (we are using the triangular inequality again). Therefore in any case we have exhibited 
a vector of rank $i+1$ or $i+2$.
\end{proof}

In the proofs of the following two lemmas we need the assumption $i \ge 3$. For completeness of the exposition we will stress when the assumption is technically used.

\begin{lemma}
\label{lem:i+1} \label{lll2}
   Let $i\geq 3$ and let $X$ be a modular element of $ \mL_i(n,m;q)$. If $X$ contains an element of rank $i+2$, then it contains an element of 
   rank $i+1$.
\end{lemma}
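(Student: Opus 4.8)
The plan is to prove the stronger statement that $X$ contains the entire $(i+2)$-dimensional subspace spanned by a minimal rank decomposition of its rank-$(i+2)$ element; a rank-$(i+1)$ vector then falls out for free.

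First I would fix an element $x\in X$ of rank $i+2$ and, by Lemma~\ref{lem:sumrkj}, write $x=x_1+\cdots+x_{i+2}$ with the $x_j$ of rank $1$ and linearly independent over $\F_{q^m}$; for this minimal decomposition every subset sum $\sigma_T:=\sum_{j\in T}x_j$ has rank exactly $|T|$. Put $W:=\langle x_1,\ldots,x_{i+2}\rangle_{\F_{q^m}}$. Since $\sigma_{\{1,\ldots,i+1\}}=x_1+\cdots+x_{i+1}\in W$ has rank $i+1$, it suffices to show $W\subseteq X$.

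The second step populates $X$ with subset sums. For any $T$ with $2\le|T|\le i$ the complement satisfies $|T^c|=i+2-|T|\le i$, so $\sigma_T$ and $\sigma_{T^c}$ both lie in $T_i(n,m;q)$ while $x=\sigma_T+\sigma_{T^c}\in X\setminus T_i(n,m;q)$. Applying Lemma~\ref{lem:x+y} to $x$ and the vector $-\sigma_{T^c}$ (which has the same rank as $\sigma_{T^c}$ by Proposition~\ref{prop:rk}) gives $\sigma_{T^c}\in X$; letting $T$ range over all admissible sets yields $\sigma_S\in X$ for every $S$ with $2\le|S|\le i$. Because $X$ is an $\F_{q^m}$-subspace, the whole $\F_{q^m}$-span of these subset sums is contained in $X$.

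The final step is to identify this span with $W$. Writing $\sigma_T$ as its indicator vector $\mathbf 1_T$ in the basis $(x_j)$ of $W$, the size-$2$ sums are the vectors $\mathbf 1_{\{s,p\}}$. When $\textup{char}(\F_q)\neq 2$ these already span $W$, since $\mathbf 1_{\{s,p\}}+\mathbf 1_{\{s,t\}}-\mathbf 1_{\{p,t\}}=2\,\mathbf 1_{\{s\}}$ recovers each standard basis vector; here one only needs $i\ge 2$. The delicate case, and the one forcing the hypothesis $i\ge 3$, is $\textup{char}(\F_q)=2$: then the identity above degenerates ($2\,\mathbf 1_{\{s\}}=0$) and the size-$2$ sums span only the codimension-$1$ hyperplane $H=\{v\in\F_{q^m}^{i+2}:\sum_j v_j=0\}$. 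To leave $H$ I would invoke a size-$3$ subset sum $\mathbf 1_{\{a,b,c\}}$, which belongs to $X$ precisely because $3\le i$; its coordinate sum equals $3=1\neq 0$, so it lies outside $H$ and, together with the size-$2$ sums, spans all of $W$. In both characteristics one concludes $W\subseteq X$, and hence $x_1+\cdots+x_{i+1}\in X$ is a rank-$(i+1)$ element. The sole genuine obstacle is this characteristic-$2$ span computation; everything else is a formal consequence of Lemmas~\ref{lem:sumrkj} and~\ref{lem:x+y} together with the $\F_{q^m}$-linearity of $X$.
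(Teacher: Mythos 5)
Your proof is correct, and the first two steps (fixing a minimal rank decomposition $x=x_1+\cdots+x_{i+2}$ via Lemma~\ref{lem:sumrkj} and feeding subset sums into $X$ via Lemma~\ref{lem:x+y}) are exactly the engine the paper uses. Where you diverge is the endgame. The paper puts only two subset sums into $X$, namely $y=x_1+\cdots+x_i$ and $z=x_1+\cdots+x_{i-1}$ (the latter is where $i\ge 3$ enters, since $x-z$ is a sum of three rank-$1$ vectors), and then observes that $x-y+z=x_1+\cdots+x_{i-1}+x_{i+1}+x_{i+2}$ already has rank $i+1$ by Lemma~\ref{lem:sumrkj}. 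You instead prove the stronger claim $W=\langle x_1,\ldots,x_{i+2}\rangle_{\Fqm}\subseteq X$, which forces the indicator-vector span computation and the case split on $\char(\F_q)$. That computation is fine (the size-$2$ sums span the coordinate-sum-zero hyperplane in characteristic $2$, and a size-$3$ sum, available because $i\ge 3$, breaks out of it), and the stronger conclusion is consistent with Lemma~\ref{lll3}, which shows a posteriori that $X=\Fqmn$. But note the detour is avoidable even within your own setup: once you have all $\sigma_S$ with $2\le|S|\le i$ in $X$, the single combination $\sigma_{\{1,\ldots,i-1\}}+\sigma_{\{i,i+1\}}=x_1+\cdots+x_{i+1}$ is a rank-$(i+1)$ element of $X$, with no characteristic considerations needed. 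So your approach buys a stronger intermediate statement at the cost of extra linear algebra, while the paper's buys brevity by targeting only the rank-$(i+1)$ witness.
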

\begin{proof}
    Let $x\in X$ be of rank $i+2$ and write $x:=x_1+\ldots+x_{i+2}$ for some rank-$1$ elements $x_1,\ldots,x_{i+2}\in\Fqmn$ that are linearly independent over $\Fqm$. Define $y:=x_1+\ldots+x_i$ and $z:=x_1+\cdots+x_{i-1}$. Notice that $y,z,x-y,x-z\in T_i(n,m;q)$ by Lemma~\ref{lem:sumrkj} (here we use $i \ge 3$). Therefore, by Lemma \ref{lem:x+y}, $y,z,x-y$ and $x-z$ are elements of $X$. Finally, we have that $x-y+z=x_1+\ldots+x_{i-1}+x_{i+1}+x_{i+2}$ is an element of $X$ of rank $i+1$ by Lemma~\ref{lem:sumrkj}. 
\end{proof}

\begin{lemma}
\label{lem:fullspace} \label{lll3}
    Let $i\geq 3$ and let $X$ be a modular element of $\mL_i(n,m;q)$. If $X$ contains an element of rank $i+1$, then $X=\Fqmn$.  
\end{lemma}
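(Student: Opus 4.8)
The plan is to prove the stronger statement that $X$ contains every vector of $\Fqn$; since $\Fqn$ contains an $\Fqm$-basis of $\Fqmn$ (for instance the standard basis $e_1,\dots,e_n$), this forces $X=\Fqmn$.

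First I would fix $x\in X$ with $\rk(x)=i+1$ and, via Lemma~\ref{lem:sumrkj}, decompose $x=x_1+\dots+x_{i+1}$ with the $x_\ell$ of rank $1$ and $\Fqm$-linearly independent. Each $x_\ell$ being of rank $1$, I write $x_\ell=\lambda_\ell y_\ell$ with $\lambda_\ell\in\Fqm\setminus\{0\}$ and $y_\ell\in\Fqn\setminus\{0\}$. Setting $y:=x_1+\dots+x_i$, Lemma~\ref{lem:sumrkj} gives $\rk(y)=i$, so $y\in T_i(n,m;q)$, while $x-y=x_{i+1}$ has rank $1$; since $\rk(x)=i+1$ gives $x\in X\setminus T_i(n,m;q)$, Lemma~\ref{lem:x+y} (applied to $x$ and $-y$) yields $y\in X$.

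The heart of the argument is the next step, in which I would exploit the rank metric to pull an arbitrary $\Fq$-direction into $X$. Fixing any $u\in\Fqn$, I would set
\[
z:=y-\lambda_1 u=\lambda_1(y_1-u)+x_2+\dots+x_i .
\]
Since $y_1-u\in\Fqn$, the term $\lambda_1(y_1-u)$ has rank at most $1$, so $z$ is a sum of at most $i$ rank-$1$ vectors and hence lies in $T_i(n,m;q)$; meanwhile $x-z=x_{i+1}+\lambda_1 u$ is a sum of two rank-$1$ vectors, so $\rk(x-z)\le 2\le i$ and $x-z\in T_i(n,m;q)$ as well. Applying Lemma~\ref{lem:x+y} to $x$ and $-z$ then gives $z\in X$. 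As $X$ is an $\Fqm$-subspace containing both $y$ and $z$, we obtain $\lambda_1 u=y-z\in X$, and since $\lambda_1\neq 0$ this means $u\in X$.

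Because $u\in\Fqn$ was arbitrary, $\Fqn\subseteq X$, and by the reduction in the first paragraph we conclude $X=\Fqmn$. The step I expect to be the main obstacle --- and the one where the rank metric is essential --- is the construction of $z$: it must be chosen so that \emph{both} $z$ and $x-z$ stay in $T_i(n,m;q)$, which works precisely because combining the $\Fq$-representative $y_1$ of $x_1$ with an arbitrary $u\in\Fqn$ keeps the rank of their difference at most $1$; the Hamming analogue of this maneuver fails, which is exactly why Lemmas~\ref{lem:rk3} and~\ref{lem:rk4} have no Hamming counterpart. I remark that the argument in fact only uses $i\ge 2$, the hypothesis $i\ge 3$ being inherited from the case division (the case $i=2$ is covered by Lemma~\ref{lem:rk3}).
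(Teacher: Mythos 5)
Your proof is correct, and while it runs on the same engine as the paper's --- two applications of Lemma~\ref{lem:x+y}, each time with an auxiliary vector $z$ engineered so that both $z$ and $x-z$ land in $T_i(n,m;q)$ --- the construction of that auxiliary vector is genuinely different. The paper first extracts each $x_j$ (via $x-x_j\in T_i(n,m;q)$), completes $\{x_1,\ldots,x_{i+1}\}$ to a basis of $\Fqmn$ by rank-$1$ vectors $x_{i+2},\ldots,x_n$, and then captures each new basis vector through $z_s:=x_1+x_2-x_s$; since $z_s$ is a sum of three rank-$1$ vectors, this is exactly where the hypothesis $i\ge 3$ enters. You instead put $y:=x_1+\cdots+x_i$ into $X$ and, for an \emph{arbitrary} $u\in\F_q^n$, use the merging identity $y-\lambda_1 u=\lambda_1(y_1-u)+x_2+\cdots+x_i$, so that the two rank-$1$ terms $\lambda_1 y_1$ and $\lambda_1 u$ collapse into a single vector of rank at most $1$. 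This is precisely the trick the paper deploys in Lemma~\ref{lem:rk3} for $i=2$, here lifted to general $i$. What your version buys: it needs only $i\ge 2$ (so it subsumes Lemma~\ref{lem:rk3}, as you note), it avoids any choice of basis completion, and it yields the cleaner intermediate statement $\F_q^n\subseteq X$, from which $X=\Fqmn$ is immediate. What the paper's version buys is uniformity of exposition with the surrounding lemmas, at the cost of the extra hypothesis. All the rank estimates you invoke check out ($y_1-u\in\F_q^n$ has rank at most $1$, $x-z=x_{i+1}+\lambda_1 u$ has rank at most $2\le i$), and the degenerate cases $u=0$ and $u=y_1$ cause no trouble.
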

\begin{proof}
    Suppose that $X$ contains an element of rank $i+1$, say $x$. Write $x=x_1+\ldots+x_{i+1}$ for some $x_1,\ldots,x_{i+1}\in\Fqmn$ of rank $1$ and linearly independent over~$\Fqm$. 
    Note that for all
    $j\in\{1,\ldots, i+1\}$
    we have $x-x_j\in T_i(n,m;q)$.
    Therefore by Lemma~\ref{lem:x+y} we have $x_j\in X$ for all $j$. We now complete $\{ x_1, \ldots, x_{i+1}\}$ to a basis
    of $\F_{q^m}^n$ by selecting vectors 
    $x_{i+2},\ldots,x_{n}\in\Fqmn$ of rank $1$ (this is always possible because the rank-$1$ vectors of $\Fqmn$ span~$\Fqmn$). Thus $\<x_1,\ldots,x_{i+1},x_{i+2},\ldots,x_n\>_{\Fqm}=\Fqmn$.  For $s\in\{i+2,\ldots,n\}$, define the vector $z_s:=x_1+x_2-x_s$ and notice that $z_s,x-z_s\in T_i(n,m;q)$ by construction (here is where the assumption $i \ge 3$ is technically needed). Lemma \ref{lem:x+y} implies that $z_s\in X$ and therefore $x_s\in X$, for all $s\in\{i+2,\ldots,n\}$, since $x_1,x_2\in X$ by the first part of the proof and~$X$ is a linear space. It follows that $X$ contains a basis for $\Fqmn$ and therefore $X=\Fqmn$, as claimed.
\end{proof}

We can finally characterize the 
modular elements of any lattice
$\mL_i(n,m;q)$ for $2 \le i \le n$.
The following is the analogue of~\cite[Theorem~1.3]{bonin1993modular}.
Note that in the rank-metric context (and in sharp contrast to the Hamming-metric case) it also holds for $i=2$.

\begin{theorem}
\label{thm:modLi}
    For all $i\geq 2$, the modular elements of $\mL_i(n,m;q)$ are the elements of the set $\{X\leq\Fqmn:X\subseteq T_i(n,m;q)\}\cup\{\Fqmn\}$. 
\end{theorem}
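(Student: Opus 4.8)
The plan is to prove the two inclusions separately, observing that essentially all of the technical content already lives in the preliminary lemmas, so the argument reduces to assembling them together with a split into the cases $i=2$ and $i\ge 3$.

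First I would dispatch the easy (sufficiency) direction, showing every element of $\{X\le\Fqmn: X\subseteq T_i(n,m;q)\}\cup\{\Fqmn\}$ is modular. The top element is handled by Lemma~\ref{lem:carmod}: for every $Y\in\mL_i(n,m;q)$ we have $\Fqmn\wedge_n Y=Y=\langle y\in Y:\rk(y)\le i\rangle=\Fqmn\wedge_i Y$, the middle equality holding because each $Y$ is spanned by vectors of rank at most $i$. If instead $X\subseteq T_i(n,m;q)$, then for any $Y$ every vector of the ordinary intersection $X\cap Y=X\wedge_n Y$ already has rank at most $i$, so $X\wedge_i Y=\langle x\in X\cap Y:\rk(x)\le i\rangle=X\cap Y=X\wedge_n Y$, and Lemma~\ref{lem:carmod} again yields modularity.

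Next I would prove the necessity direction by contraposition: if $X$ is modular and $X\not\subseteq T_i(n,m;q)$, i.e.\ $X$ contains a vector of rank strictly greater than $i$, then $X=\Fqmn$. Here I split on the value of $i$. For $i\ge 3$, Lemma~\ref{lll1} guarantees that $X$ contains an element of rank $i+1$ or $i+2$; in the rank-$(i+2)$ case Lemma~\ref{lll2} upgrades this to an element of rank $i+1$, and in either case Lemma~\ref{lll3} forces $X=\Fqmn$. For $i=2$, Lemma~\ref{lll1} (whose hypotheses only require $i\ge 2$) produces an element of rank $3$ or $4$: in the rank-$3$ case Lemma~\ref{lem:rk3} gives $X=\Fqmn$ directly, while in the rank-$4$ case Lemma~\ref{lem:rk4} shows that a proper subspace carrying such an element cannot be modular, so again $X=\Fqmn$.

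The main obstacle is not a single hard computation but the correct orchestration of this case split. Lemmas~\ref{lll2} and~\ref{lll3} are available only for $i\ge 3$, so the base case $i=2$ must be routed separately through the dedicated rank-$3$ and rank-$4$ Lemmas~\ref{lem:rk3} and~\ref{lem:rk4}; and one must check that Lemma~\ref{lll1} indeed applies uniformly in both regimes, reducing every element of rank exceeding $i$ to one of rank $i+1$ or $i+2$. Once this bookkeeping is in place, the two directions combine to give exactly the claimed description of the modular elements.
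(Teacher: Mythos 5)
Your proof is correct and follows essentially the same route as the paper: the sufficiency direction via Lemma~\ref{lem:carmod}, and the necessity direction by combining Lemmas~\ref{lem:rk3} and~\ref{lem:rk4} for $i=2$ and Lemmas~\ref{lll1}, \ref{lll2} and~\ref{lll3} for $i\ge 3$. Your explicit invocation of Lemma~\ref{lll1} in the $i=2$ case, to first reduce to an element of rank $3$ or $4$ before applying the dedicated lemmas, is a step the paper leaves implicit, and spelling it out is a small gain in completeness rather than a different approach.
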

\begin{proof}
    First of all we note that if $X\subseteq T_i(n,m;q)$, then $X\wedge_i Y\in \mL_i(n,m;q)$ and therefore $X\wedge_i Y=X\wedge_n Y$, for all $Y\in \mL_i(n,m;q)$. It follows that $X$ is modular in $\mL_i(n,m;q)$ by Lemma~\ref{lem:carmod}. Moreover, $\Fqmn$ is trivially modular. 
    
    In order to prove the other implication, suppose that
    $X$ is a modular element of $\mL_i(n,m;q)$ that contains a vector of rank strictly greater than~$i$.
    By combining Lemmas~\ref{lem:rk3} and \ref{lem:rk4} in the case $i=2$ and Lemmas~\ref{lll1}, \ref{lll2} and \ref{lll3} in the case $i \ge 3$, we conclude that $X=\F_{q^m}^n$. This establishes the theorem.
\end{proof}

We are finally ready to establish Theorem \ref{thm:superLi}.

\begin{proof}[Proof of Theorem \ref{thm:superLi}]
    The supersolvability of the lattice $\mL_1(n,m;q)$ immediately follows from the fact that 
    $\mL_1(n,m;q)$ is a modular lattice
    by Proposition \ref{prop:L1modular}. We assume $i \ge 2$ in the remainder of the proof. Let $\{e_1,\ldots,e_n\}$ be the canonical basis of
    $\Fqmn$. Consider the following maximal chain in $\mL_i(n,m;q)$:
    \begin{equation}
    \label{eq:maxchain}
        \<0\>_{\Fqm}\cov \<e_1\>_{\Fqm}\cov \<e_1,e_2\>_{\Fqm}\cov \cdots \cov \<e_1,e_2,\ldots,e_n\>_{\Fqm}.
    \end{equation}
    For $1 \le k \le n$, all the elements of  $\<e_1,e_2,\ldots,e_k\>_{\Fqm}$ have rank at most $k$ by the triangular inequality.
    Moreover, Theorem~\ref{thm:modLi} implies that if $i\in\{n-1,n\}$ or $m\in\{2,\ldots,i\}$, then all the element of the chain in \eqref{eq:maxchain} are modular. This shows that, by definition, $\mL_i(n,m;q)$
    is supersolvable in those cases.
    
    We conclude the proof by showing that $\mL_i(n,m;q)$ cannot be supersolvable if $m\geq i+1$ and $i\in\{2,\ldots,n-2\}$. We will do this by showing a much stronger fact: All the elements of~$\mL_i(n,m;q)$ of rank (i.e., dimension) $i+1$ cannot be modular. 
   To see this, let $X\in \mL_i(n,m;q)$ have dimension $i+1$ and let $\smash{G\leq\Fqm^{(i+1)\times n}}$ be the full-rank matrix in reduced row-echelon form
    whose rows are a basis for $X$. 
    Let $\Gamma:=\{\gamma_1,\ldots,\gamma_{i+1}\}$ be $\F_q$-linearly independent elements of $\Fqm$ (they exist because $m \ge i+1$).
    One can easily check that $\sum_{s=1}^{i+1}\gamma_sG_s$ has rank at least $i+1$. Therefore $X$ cannot be modular by Theorem~\ref{thm:modLi}.
\end{proof}

Table~\ref{thetable} illustrates Theorem \ref{thm:superLi} visually. For a fixed
$n\geq 3$, it shows the ``supersolvability region'' of $\mL_i(n,m;q)$.

\renewcommand\arraystretch{1.5}
\begin{table}
\centering
\begin{tabular}[c]{{|>{\centering\arraybackslash}p{0.5cm}}*8{|>{\centering\arraybackslash}p{0.8cm}}|}
\cline{3-9}
\multicolumn{2}{c|}{}&\multicolumn{7}{c|}{$m$}\\ \cline{3-9}
\multicolumn{2}{c|}{}& $2$ & $3$ & $4$ & $\cdots$ & $j$ & $j+1$ & $\cdots$ \\\hline
\multirow{8}{*}{$i$}&$1$ & \cb & \cb & \cb & \cb & \cb & \cb & \cb \\\hhline{~|*8{-}}
& $2$ & \cb & & & & & & \\\hhline{~|*8{-}}
&$3$ & \cb & \cb &  &  &  &  &  \\\hhline{~|*8{-}}
& $\vdots$ & \cb & \cb & \cb  & & & &  \\\hhline{~|*8{-}}
&$j$ & \cb & \cb & \cb & \cb &\cb &  & \\\noalign{\vskip-0.5pt}\hhline{~|*8{-}}
& $\vdots$ & \cb & \cb & \cb & \cb  &\cb & \cb  &  \\\noalign{\vskip-1pt}\hhline{~|*8{-}}
&$n-1$ &\cb & \cb & \cb & \cb & \cb & \cb & \cb \\\noalign{\vskip-1pt}\hhline{~|*8{-}}
&$n$ &\cb & \cb & \cb & \cb & \cb & \cb & \cb \\\hline
\end{tabular}

\caption{The ``region'' of supersolvable rank-metric lattices for $n\geq 3$. \label{thetable}}
\end{table}
\renewcommand\arraystretch{1}

\section{The Characteristic Polynomial of $\mL_i(n,m;q)$}
\label{secPAP:5}

In this section we derive explicit formulas for the Whitney numbers of the first and second kind of 
some rank-metric lattices,
including the supersolvable ones 
we characterized 
in Theorem~\ref{thm:superLi}. We also compute the \textit{critical exponent} (defined later) of rank-metric
lattices for some parameter sets.
We start with the case $i=1$.

\begin{proposition}
\label{prop:whitneyL1}
    The following hold.
    \begin{enumerate}[label=(\arabic*)]
        \item $w_j(1,n,m;q)=(-1)^jq^{\binom{j}{2}}\qbin{n}{j}$ for all $j\in\{0,\ldots,n\}$.
        \item $W_j(1,n,m;q)=\qbin{n}{j}$ for all $j\in\{0,\ldots,n\}$.
        \item\label{item3:wL1} $\displaystyle  \chi(\mL_1(n,m;q);\lambda)=\sum_{j=0}^n(-1)^jq^{\binom{j}{2}}\qbin{n}{j}\lambda^{n-j}=\prod_{j=0}^{n-1}(\lambda-q^j).$
    \end{enumerate}
\end{proposition}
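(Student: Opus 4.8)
The plan is to reduce all three statements to classical facts about the subspace lattice of $\F_q^n$, via the isomorphism $f\colon L(\F_q^n)\to\mL_1(n,m;q)$, $U\mapsto U\otimes_{\F_q}\F_{q^m}$, constructed in Proposition~\ref{prop:L1modular}, where $L(\F_q^n)$ denotes the lattice of $\F_q$-subspaces of $\F_q^n$. The essential observation is that $f$ matches $\F_q$-dimension with $\F_{q^m}$-dimension, i.e.\ $\dimqm f(U)=\dimq U$, so the rank function of $\mL_1(n,m;q)$ corresponds under $f$ to $\F_q$-dimension on $L(\F_q^n)$. Being a lattice isomorphism, $f$ also preserves the M\"obius function on corresponding intervals and the number of elements at each rank. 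Hence it suffices to prove the three identities inside $L(\F_q^n)$.

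Part~(2) is then the standard count: the number of rank-$j$ elements of $\mL_1(n,m;q)$ equals the number of $j$-dimensional $\F_q$-subspaces of $\F_q^n$, which is $\qbin{n}{j}$.

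For part~(1) I would compute the M\"obius function of $L(\F_q^n)$. For a $j$-dimensional subspace $V$, the interval $[0,V]$ is isomorphic to $L(\F_q^j)$, so $\mu(0,V)$ depends only on $j$; the classical value is $\mu(0,V)=(-1)^j q^{\binom{j}{2}}$. I would obtain this either by citing \cite{stanley2011enumerative} or by solving the defining recursion of Definition~\ref{def:mobius} by induction on $j$ with the help of the $q$-binomial theorem. Since $\mu(0,s)$ is then constant on each rank level, summing over the $\qbin{n}{j}$ elements of rank $j$ (part~(2)) gives $w_j(1,n,m;q)=\sum_{\rho(s)=j}\mu(0,s)=(-1)^j q^{\binom{j}{2}}\qbin{n}{j}$.

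Finally, part~(3) follows by substituting the coefficients from part~(1) into the definition of the characteristic polynomial, which yields the first displayed equality $\chi(\mL_1(n,m;q);\lambda)=\sum_{j=0}^n(-1)^j q^{\binom{j}{2}}\qbin{n}{j}\lambda^{n-j}$. The product form is then a direct application of Lemma~\ref{lem:bin}(3) with $Q=q$, $c=n$, $x=\lambda$, and $y=1$, turning the alternating sum into $\prod_{j=0}^{n-1}(\lambda-q^j)$. Alternatively, since $\mL_1(n,m;q)$ is modular, hence supersolvable, the factorization can be read off from Corollary~\ref{cor:stanley} applied to the flag $\<0\>_{\Fqm}\cov\<e_1\>_{\Fqm}\cov\cdots\cov\<e_1,\ldots,e_n\>_{\Fqm}$: the number of atoms below $\<e_1,\ldots,e_i\>_{\Fqm}$ but not below $\<e_1,\ldots,e_{i-1}\>_{\Fqm}$ is $q^{i-1}$, giving the factor $\lambda-q^{i-1}$. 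The only genuinely non-routine ingredient is the M\"obius computation in part~(1); once it is available, the remaining steps are bookkeeping together with the single invocation of the $q$-binomial identity, so I anticipate no serious obstacle.
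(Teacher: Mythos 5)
Your proposal is correct and follows essentially the same route as the paper: both reduce everything to the subspace lattice of $\F_q^n$ via the isomorphism of Proposition~\ref{prop:L1modular}, read off parts (1) and (2) from the classical M\"obius function and subspace count, and obtain the product form in part (3) from Lemma~\ref{lem:bin}. Your extra remarks (the rank-preservation of $f$ and the alternative derivation of the factorization via Corollary~\ref{cor:stanley}) are sound but not needed beyond what the paper's one-line proof already invokes.
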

\begin{proof}
    The first two formulas follow from Proposition~\ref{prop:L1modular} and the definitions of Whitney numbers of the first and second kind. The third formula  follows from the definition of characteristic polynomial and Lemma~\ref{lem:bin}.
\end{proof}

We now treat the case $i=n$.
The following result easily follows from the definitions
and its proof is therefore omitted.

\begin{proposition}
\label{prop:whitneyLn}
    The following hold.
    \begin{enumerate}[label=(\arabic*)]
        \item $w_j(n,n,m;q)=(-1)^jq^{m\binom{j}{2}}\qmbin{n}{j}$ for all $j\in\{0,\ldots,n\}$.
        \item $W_j(n,n,m;q)=\qmbin{n}{j}$ for all $j\in\{0,\ldots,n\}$.
        \item $\displaystyle  \chi(\mL_n(n,m;q);\lambda)=\sum_{j=0}^n(-1)^jq^{m\binom{j}{2}}\qmbin{n}{j}\lambda^{n-j}=\prod_{j=0}^{n-1}(\lambda-q^{mj}).$
    \end{enumerate}
\end{proposition}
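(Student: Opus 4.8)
The plan is to exploit the identification, recorded in Remark~\ref{rem:Li}, that $\mL_n(n,m;q)$ is nothing but the full lattice $\mL(\Fqmn)$ of $\F_{q^m}$-subspaces of $\Fqmn$ ordered by inclusion, with rank function given by the $\F_{q^m}$-dimension. This is the classical subspace lattice of a finite vector space, and the whole computation is the exact analogue of Proposition~\ref{prop:whitneyL1} with the base field $\F_q$ replaced by $\F_{q^m}$: indeed $\mL_1(n,m;q)$ is the subspace lattice of $\F_q^n$ by Proposition~\ref{prop:L1modular}, whereas $\mL_n(n,m;q)$ is the subspace lattice of $\Fqmn$. Setting $Q=q^m$ throughout, the two statements become formally identical, and I would carry out the three items in order.

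For the Whitney numbers of the second kind I would simply count: the number of elements of rank $j$ is the number of $j$-dimensional $\F_{q^m}$-subspaces of $\Fqmn$, which by definition of the Gaussian binomial coefficient equals $\qmbin{n}{j}$. This is item (2).

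For the Whitney numbers of the first kind, the key observation is that for any $X\in\mL_n(n,m;q)$ of dimension $j$ the interval $[0,X]$ is itself isomorphic to the subspace lattice of $X\cong\Fqm^j$; hence $\mu(0,X)$ depends only on $j$, and I denote this common value by $\mu_j$. From the defining recursion in Definition~\ref{def:mobius} one has $\sum_{0\le U\le X}\mu(0,U)=0$ for every $X$ of positive dimension, which after grouping the subspaces $U$ by their dimension $k$ reads $\sum_{k=0}^{j}\qmbin{j}{k}\mu_k=0$ for all $j\ge 1$, with $\mu_0=1$. Since this relation determines $\mu_j$ uniquely from $\mu_0,\ldots,\mu_{j-1}$, it suffices to exhibit one solution. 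I would then check that $\mu_j=(-1)^jq^{m\binom{j}{2}}$ works: substituting this candidate into the sum and invoking Lemma~\ref{lem:bin}(3) with $Q=q^m$, $c=j$ and $x=y=1$ collapses the left-hand side to $\prod_{k=0}^{j-1}(1-q^{mk})$, which vanishes because its $k=0$ factor is $0$. Consequently $w_j(n,n,m;q)=\sum_{\rho(X)=j}\mu(0,X)=\qmbin{n}{j}\mu_j=(-1)^jq^{m\binom{j}{2}}\qmbin{n}{j}$, giving item (1).

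Finally, substituting this value of $w_j$ into the definition of the characteristic polynomial yields the first equality in item (3), and the factorization $\prod_{j=0}^{n-1}(\lambda-q^{mj})$ follows from a single further application of Lemma~\ref{lem:bin}(3), now with $Q=q^m$, $c=n$, $x=\lambda$ and $y=1$. I do not expect a genuine obstacle, since every step is standard for the subspace lattice; the only point that requires an argument beyond unwinding definitions is the determination of the common Möbius value $\mu_j$, and even this reduces at once to the $q$-binomial identity already packaged in Lemma~\ref{lem:bin}(3).
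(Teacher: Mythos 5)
Your proof is correct and follows exactly the route the paper intends: it identifies $\mL_n(n,m;q)$ with the full subspace lattice of $\Fqmn$ (Remark~\ref{rem:Li}) and runs the classical M\"obius-function computation, which the paper declares routine and omits, and which is the verbatim $Q=q^m$ analogue of Proposition~\ref{prop:whitneyL1}. All the steps you supply — the count of $j$-dimensional subspaces, the uniqueness of the solution to the recursion $\sum_{k=0}^{j}\qmbin{j}{k}\mu_k=0$ together with the verification of the candidate $\mu_j=(-1)^jq^{m\binom{j}{2}}$ via Lemma~\ref{lem:bin}(3), and the final factorization — are sound.
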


It is interesting to observe the very close ``structural'' analogy
between Propositions~\ref{prop:whitneyL1} and~\ref{prop:whitneyLn}, where the only difference is the presence of the factor $m$.

Since the rank of a vector $v \in \F_{q^m}^n$ cannot exceed $m$, the next result follows from 
Proposition \ref{prop:whitneyLn}.

\begin{corollary}
Suppose that $m\in\{2,\ldots,i\}$. The following hold.
    \begin{enumerate}[label=(\arabic*)]
        \item $w_j(i,n,m;q)=(-1)^jq^{m\binom{j}{2}}\qmbin{n}{j}$ for all $j\in\{0,\ldots,n\}$.
        \item $W_j(i,n,m;q)=\qmbin{n}{j}$ for all $j\in\{0,\ldots,n\}$.
        \item $\displaystyle  \chi(\mL_i(n,m;q);\lambda)=\sum_{j=0}^n(-1)^jq^{m\binom{j}{2}}\qmbin{n}{j}\lambda^{n-j}=\prod_{j=0}^{n-1}(\lambda-q^{mj}).$
    \end{enumerate}
\end{corollary}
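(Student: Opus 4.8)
The plan is to reduce the statement to Proposition~\ref{prop:whitneyLn} by showing that, under the hypothesis $m\leq i$, the lattice $\mL_i(n,m;q)$ coincides with the full subspace lattice $\mL_n(n,m;q)$, after which all three formulas are immediate.

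First I would recall that for any $v\in\Fqmn$ the quantity $\rk(v)$ is by definition the $\F_q$-dimension of the $\F_q$-span of the entries of $v$, and this span is an $\F_q$-subspace of $\Fqm$. Since $\dim_{\F_q}\Fqm=m$, this gives $\rk(v)\leq m$ for every $v\in\Fqmn$. The hypothesis $m\in\{2,\ldots,i\}$ means precisely $m\leq i$, so $\rk(v)\leq m\leq i$ for all $v$, whence $T_i(n,m;q)=\Fqmn$.

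Next I would use this to identify the two lattices. By definition the atoms of $\mL_i(n,m;q)$ are the $1$-dimensional $\Fqm$-subspaces generated by vectors of $T_i(n,m;q)$; since $T_i(n,m;q)=\Fqmn$, these are \emph{all} the $1$-dimensional subspaces of $\Fqmn$. As $\mL_i(n,m;q)$ is atomistic and is a sublattice of $\mL(\Fqmn)$ (Remark~\ref{rem:Li}), every $\Fqm$-subspace of $\Fqmn$ arises as a join of atoms and hence lies in $\mL_i(n,m;q)$. Therefore $\mL_i(n,m;q)=\mL(\Fqmn)=\mL_n(n,m;q)$, with the same induced order, rank function, meet, and join.

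Finally I would invoke Proposition~\ref{prop:whitneyLn}: since $\mL_i(n,m;q)=\mL_n(n,m;q)$, their Whitney numbers of the first and second kind and their characteristic polynomials agree, which yields parts~(1),~(2), and~(3) verbatim from the corresponding statements of Proposition~\ref{prop:whitneyLn}. There is essentially no obstacle in this argument; the only point deserving a moment of care is verifying that $\mL_i(n,m;q)=\mL_n(n,m;q)$ holds as \emph{lattices} and not merely as underlying sets, but this is immediate once the atom sets coincide and both lattices are atomistic with the order induced from $\mL(\Fqmn)$.
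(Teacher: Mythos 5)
Your argument is exactly the paper's: the corollary is stated immediately after Proposition~\ref{prop:whitneyLn} with the one-line justification that the rank of a vector in $\F_{q^m}^n$ cannot exceed $m$, so for $m\le i$ one has $T_i(n,m;q)=\F_{q^m}^n$ and $\mL_i(n,m;q)=\mL_n(n,m;q)$. Your write-up is correct and simply fills in the same reduction in more detail.
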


In the next result we study the case $i=n-1$ under the assumption that $m \ge n$. We will use Stanley's modular factorization theorem.

\begin{theorem}
\label{thm:polLn-1}
    If $n\leq m$, then 
    \begin{equation*}
        \chi(\mL_{n-1}(n,m;q);\lambda)=\left(\lambda-q^{(n-1)m}+\prod_{s=1}^{n-1}(q^m-q^s)\right)\prod_{j=0}^{n-2}(\lambda-q^{mj}).
    \end{equation*}
\end{theorem}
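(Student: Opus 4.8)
The strategy is to apply Stanley's modular factorization theorem (Theorem~\ref{thm:stanley}) to a well-chosen modular element of $\mL_{n-1}(n,m;q)$. By Theorem~\ref{thm:modLi}, the modular elements are exactly the subspaces contained in $T_{n-1}(n,m;q)$ together with $\Fqmn$ itself. The natural choice is a hyperplane $t$ of $\Fqmn$ that lies inside $T_{n-1}(n,m;q)$; the cleanest candidate is $t:=\<e_1,\ldots,e_{n-1}\>_{\Fqm}$, which by the triangular inequality has all its vectors of rank at most $n-1$, hence is modular with $\rho(t)=n-1$. Note $[0,t]$ is then isomorphic to $\mL_{n-1}(n-1,m;q)=\mL_{n-1}(n-1,m;q)$, but since $i=n-1$ equals the full rank in ambient dimension $n-1$, this interval is just the full subspace lattice $\mL(\Fqm^{n-1})$. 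By Proposition~\ref{prop:whitneyLn} we get $\chi([0,t];\lambda)=\prod_{j=0}^{n-2}(\lambda-q^{mj})$, which will supply the product factor in the claimed formula.

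The remaining work is to evaluate the sum in Theorem~\ref{thm:stanley}, namely $\sum_{x\wedge_{n-1} t=0}\mu(x)\,\lambda^{n-\rho(x)-(n-1)}=\sum_{x\wedge_{n-1} t=0}\mu(x)\,\lambda^{1-\rho(x)}$. Since the exponent must be nonnegative for the polynomial to make sense, only $x$ with $\rho(x)\in\{0,1\}$ contribute: $x=0$ contributes $\mu(0)\lambda=\lambda$, and each atom $x$ with $x\wedge_{n-1}t=0$ contributes $\mu(x)\lambda^0=-1$. Thus the sum equals $\lambda - N$, where $N$ is the number of atoms $x\in\At(\mL_{n-1}(n,m;q))$ with $x\wedge_{n-1} t=0$. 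So the whole problem reduces to counting these atoms. Matching against the target formula, I expect $N=q^{(n-1)m}-\prod_{s=1}^{n-1}(q^m-q^s)$.

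The main obstacle is this atom count $N$. An atom is a line $\<v\>_{\Fqm}$ with $\rk(v)\le n-1$, and the condition $\<v\>\wedge_{n-1}t=0$ must be interpreted carefully via the meet formula in Remark~\ref{rem:Li}: it means $\<v\>\cap t$ contains no nonzero vector of rank $\le n-1$, i.e. (since $\<v\>$ is a line) $v\notin t$. So I must count lines $\<v\>$ with $\rk(v)\le n-1$ and $v\notin t=\<e_1,\ldots,e_{n-1}\>$. It is cleaner to count vectors and divide by $q^m-1$. The plan is inclusion-style: count vectors of rank $\le n-1$ whose last coordinate is nonzero (these are exactly the $v\notin t$ among rank-$\le n-1$ vectors, once one checks that $v\in t$ forces rank $\le n-1$ automatically, so ``$v\notin t$'' is the only extra constraint). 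Equivalently, count all rank-$\le n-1$ vectors and subtract those lying in $t$. Since $m\ge n$, a generic vector in $\Fqmn$ has rank $\min\{n,m\}=n$, so the rank-$\le n-1$ vectors are precisely the non-full-rank ones; their number is $q^{mn}$ minus the number of rank-$n$ vectors. I would compute the count of vectors in $t$ (all of which have rank $\le n-1$) as $q^{m(n-1)}$, and the count of rank-$\le n-1$ vectors using the standard formula for the number of $n\times m$ matrices over $\Fq$ of each rank, translated via $v\mapsto\Gamma(v)$. Carrying out this arithmetic and dividing by $q^m-1$ should produce exactly $N=q^{(n-1)m}-\prod_{s=1}^{n-1}(q^m-q^s)$, at which point substituting $\chi([0,t];\lambda)$ and $(\lambda-N)$ into Theorem~\ref{thm:stanley} yields the stated factorization.
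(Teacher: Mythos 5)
Your proposal is correct and follows essentially the same route as the paper: the paper applies Corollary~\ref{cor:stanley} along the modular chain $0 \cov \<e_1\>_{\Fqm} \cov \cdots \cov \Fqmn$, which is just the iterated form of your single application of Theorem~\ref{thm:stanley} at $t=\<e_1,\ldots,e_{n-1}\>_{\Fqm}$, and the decisive computation---counting the atoms not below $\<e_1,\ldots,e_{n-1}\>_{\Fqm}$ as $q^{(n-1)m}-\prod_{s=1}^{n-1}(q^m-q^s)$---is identical in both. One small remark: the restriction to $\rho(x)\le 1$ in your sum is better justified by the modularity of $t$ (if $x\wedge t=0$ then $\rho(x)=\rho(x\vee t)-\rho(t)\le n-(n-1)=1$) than by the informal appeal to nonnegative exponents.
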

\begin{proof} By Corollary~\ref{cor:stanley} and the proof of Theorem \ref{thm:superLi}, we have 
    \begin{multline} \label{ff}
        \chi(\mL_{n-1}(n,m;q);\lambda)\\=\prod_{j=1}^{n}\left(\lambda-\left|\left\{A\in\textup{At}(\mL_i(n,m;q)) : A\leq \<e_1,\ldots,e_{j}\>_{\Fqm}, \, A\not\leq \<e_1,\ldots,e_{j-1}\>_{\Fqm}\right\}\right|\right),
    \end{multline}
    were we set the span over $\Fqm$ of the empty set to be $\{0\}$. Therefore it remains to compute the quantities 
    \begin{equation*}
        a_j:=\left|\left\{A\in\textup{At}(\mL_{n-1}(n,m;q)):A\leq \<e_1,\ldots,e_j\>_{\Fqm},A\not\leq \<e_1,\ldots,e_{j-1}\>_{\Fqm}\right\}\right|
    \end{equation*}
    for $j\in\{1,\ldots,n\}$. Observe that the atoms of $\mL_i(n,m,q)$ below $\<e_1,\ldots,e_{j}\>_{\Fqm}$ but not below $\<e_1,\ldots,e_{j-1}\>_{\Fqm}$ are the $1$-dimensional subspace of $\Fqmn$ generated by a vector of the form $(\star,\cdots,\star,1,0,\cdots,0)$, where the $1$ is in position $j$ and the $\star$'s are arbitrary elements of $\F_{q^m}$. It follows that $a_j=q^{(j-1)m}$, for all  $j\in\{1,\ldots,n-1\}$. Moreover,     \begin{equation*}
        a_n=q^{(n-1)m}- \left|\left\{\<x\>_{\Fqm}\leq\Fqmn:\rk(x)=n\right\}\right|=q^{(n-1)m}-\prod_{s=1}^{n-1}(q^m-q^s).
    \end{equation*}
    Substituting these expressions into~\eqref{ff}
    gives the desired result.
\end{proof}

We now extract from
Theorem~\ref{thm:polLn-1} some more explicit information about the Whitney numbers of $\mL_{n-1}(n,m;q)$.

\begin{corollary}
    The following hold for $n\leq m$. 
    \begin{enumerate}[label=(\arabic*)]
    \setlength{\itemsep}{3pt}
        \item\label{item1:wLn-1} $w_0(n-1,n,m;q)=1$.
        \item\label{item2:wLn-1} $\displaystyle w_j(n-1,n,m;q)=(-1)^j\left(q^{m\binom{j}{2}}\qmbin{n}{j}-q^{m\binom{j-1}{2}}\qmbin{n-1}{j-1}\prod_{s=1}^{n-1}(q^m-q^s)\right)$ for all $j\in\{1,\ldots,n\}$.
        \item\label{item3:wLn-1} $W_0(n-1,n,m;q)=1$.
        \item\label{item4:wLn-1} $\displaystyle W_1(n-1,n,m;q)=\qmbin{n}{1}-\prod_{s=1}^{n-1}(q^m-q^s)$.
        \item\label{item5:wLn-1} $W_j(n-1,n,m;q)=\qmbin{n}{j}$ for all $j\in\{2,\ldots,n\}$.
    \end{enumerate}
\end{corollary}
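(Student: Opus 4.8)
The plan is to read items (1) and (2) off the factored characteristic polynomial of Theorem~\ref{thm:polLn-1}, and to obtain items (3)--(5) by counting the rank-$j$ elements of $\mL_{n-1}(n,m;q)$ directly; only item (5) requires real work. For items (1) and (2), I would write $\chi(\mL_{n-1}(n,m;q);\lambda)=(\lambda-c)\prod_{j=0}^{n-2}(\lambda-q^{mj})$ with $c:=q^{(n-1)m}-\prod_{s=1}^{n-1}(q^m-q^s)$, and expand the product by Proposition~\ref{prop:whitneyLn}(3) applied with $n$ replaced by $n-1$ (equivalently Lemma~\ref{lem:bin}(3)). The coefficient of $\lambda^n$ is $1$, which is item (1). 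For $j\ge 1$, extracting the coefficient of $\lambda^{n-j}$ gives $w_j=(-1)^j\big(q^{m\binom{j}{2}}\qmbin{n-1}{j}+c\,q^{m\binom{j-1}{2}}\qmbin{n-1}{j-1}\big)$; substituting $c$ and merging the two terms $q^{m\binom{j}{2}}\qmbin{n-1}{j}+q^{(n-1)m}q^{m\binom{j-1}{2}}\qmbin{n-1}{j-1}$ through the $q$-Pascal rule $\qmbin{n}{j}=\qmbin{n-1}{j}+q^{m(n-j)}\qmbin{n-1}{j-1}$ (a special case of Lemma~\ref{lem:bin}(2) with $Q=q^m$, using $\binom{j}{2}-\binom{j-1}{2}=j-1$) turns them into $q^{m\binom{j}{2}}\qmbin{n}{j}$, which is exactly item (2).

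Items (3) and (4) are counts of rank-$j$ elements, i.e. of $j$-dimensional subspaces spanned by vectors of rank at most $n-1$. Only $\{0\}$ has rank $0$, giving item (3). For item (4), a line $\langle x\rangle$ lies in the lattice iff $\rk(x)\le n-1$; since $n\le m$, a vector has rank $n$ exactly when $\Gamma(x)$ has full row rank, and there are $\prod_{s=0}^{n-1}(q^m-q^s)$ such vectors, hence $\prod_{s=1}^{n-1}(q^m-q^s)$ lines through them, so subtracting from $\qmbin{n}{1}$ gives item (4).

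Item (5) is the crux: I must show that every subspace $X$ of dimension $j\ge 2$ is generated by its vectors of rank at most $n-1$. First I would reduce to $j=2$: every nonzero $x\in X$ lies in some $2$-dimensional subspace $Y\le X$, and if each such $Y$ is generated by its rank-$\le(n-1)$ vectors (which are also rank-$\le(n-1)$ vectors of $X$), then these span $Y\ni x$, and hence span all of $X$. For a $2$-dimensional $C=\langle u,v\rangle$, suppose toward a contradiction that its rank-$\le(n-1)$ vectors all lie on one line $\langle u\rangle$; after scaling, $\rk(u)=r\in\{1,\dots,n-1\}$, every vector of $C\setminus\langle u\rangle$ has rank $n$, and in particular $\rk(v)=n$. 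Passing to the matrix model, $\Gamma(C)\le\F_q^{n\times m}$ is a $2m$-dimensional $\F_q$-space, and I would double-count the pairs $(w,A)$ with $0\ne w\in\F_q^n$, $A\in\Gamma(C)$, and $wA=0$. Counting over $A$ and using the assumed rank distribution gives $\sum_A(q^{n-\rk(A)}-1)=(q^m-1)(q^{n-r}-1)+(q^n-1)$. Counting over $w$ gives $(q^n-1)q^m$, because for each $w\ne 0$ the $\F_q$-linear map $\beta\mapsto w\Gamma(\beta v)$ from $\F_{q^m}$ to $\F_q^m$ is injective (as $\beta v$ has full row rank for $\beta\ne 0$) hence bijective, so $\{A\in\Gamma(C):wA=0\}$ has $\F_q$-codimension exactly $m$. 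Equating the two counts yields $q^{n-r}=q^n$, i.e. $r=0$, contradicting $u\ne 0$.

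I expect this last double-counting to be the main obstacle. Items (1)--(4) are essentially bookkeeping once Theorem~\ref{thm:polLn-1} is in hand, whereas item (5) needs the genuinely quantitative fact that the rank-$\le(n-1)$ vectors of a $2$-dimensional space cannot all be confined to a single line. The Singleton bound (Theorem~\ref{thm:singleton}) alone only produces one such low-rank direction, so a counting argument of the above kind (rather than a mere dimension count, which I checked does not close the gap) seems unavoidable.
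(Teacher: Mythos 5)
Your proposal is correct, and items (1)--(4) follow the paper's own route: the paper likewise expands $\bigl(\prod_{j=0}^{n-1}(\lambda-q^{mj})\bigr)+\prod_{s=1}^{n-1}(q^m-q^s)\prod_{j=0}^{n-2}(\lambda-q^{mj})$ via Lemma~\ref{lem:bin}, whereas you expand $(\lambda-c)\prod_{j=0}^{n-2}(\lambda-q^{mj})$ and recombine with the $q$-Pascal rule; these are the same computation packaged differently, and your items (3)--(4) are exactly the paper's. The genuine divergence is item (5). Your double count of the pairs $(w,A)$ with $wA=0$ is valid (the surjectivity of $A\mapsto wA$ on $\Gamma(C)$ via the full-row-rank matrices $\Gamma(\beta v)$ is the right observation, and the resulting identity does force $r=0$), but it is far heavier than necessary, and your closing assertion that such a counting argument ``seems unavoidable'' is not right. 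The paper disposes of item (5) in two lines: take the reduced row-echelon generator matrix $G$ of a $j$-dimensional $X$; each row of $G$ has a zero in each of the $j-1$ pivot columns of the other rows, hence has rank at most $n-j+1\le n-1$, so the rows of $G$ are already a basis of $X$ consisting of vectors of rank at most $n-1$. Applied with $j=2$ this immediately produces two linearly independent vectors of rank at most $n-1$ in any plane, which is precisely the fact your double count labors to establish; it also makes your reduction from general $j$ to $j=2$ unnecessary. So: correct, but you missed the elementary pivot-column argument that the paper uses.
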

\begin{proof}
    Parts \ref{item1:wLn-1} and  \ref{item3:wLn-1} follow from the definitions. Part \ref{item2:wLn-1} follows from Theorem \ref{thm:polLn-1}, since by Lemma~\ref{lem:bin}
    we have
    \begin{multline*}
        \chi(\mL_{n-1}(n,m;q);\lambda)\\
        \begin{aligned}
        &=\prod_{j=0}^{n-1}(\lambda-q^{mj})+\prod_{s=1}^{n-1}(q^m-q^s)\prod_{j=0}^{n-2}(\lambda-q^{mj})\\
        &=\sum_{j=0}^n(-1)^jq^{m\binom{j}{2}}\qmbin{n}{j}\lambda^{n-j}+\prod_{s=1}^{n-1}(q^m-q^s)\sum_{j=1}^n(-1)^{j-1}q^{m\binom{j-1}{2}}\qmbin{n-1}{j-1}\lambda^{n-j}.
        \end{aligned}
    \end{multline*}
    By the definition of Whitney numbers of the second kind we have
    \begin{equation*}
        W_1(n-1,n,m;q)=\qmbin{n}{1}-\left|\left\{\<x\>_{\Fqm}\leq\Fqmn:\rk(x)=n\right\}\right|,
    \end{equation*}
    which implies \ref{item4:wLn-1}. Finally, \ref{item5:wLn-1} follows from the observation that any subspace of $\Fqmn$ of dimension 
    $j \ge 2$ has a basis of vectors of rank $\leq n-1$. 
    To see this, let $X\leq\Fqmn$ be of dimension $j\in\{2,\ldots,n\}$ and let $\smash{G\in\Fqm^{j\times n}}$ be the matrix in reduced row-echelon form whose rows form a basis of $X$. Every row of $X$ has at least $j-1$ zeroes. Therefore its rank is at most $n-j+1 \ge n-1$.
\end{proof}

The following result is the $q$-analogue of \cite[Theorem~3.3]{bonin1993modular}. It computes some of the roots of the characteristic polynomial of rank-metric lattices, which is a standard question in lattice theory.
    
\begin{proposition}
\label{prop:roots}
For all $i \in \{1,\ldots,n\}$, the characteristic polynomial of $\mL_i(n,m;q)$ has the integers $1,q^m,\ldots,q^{(i-1)m}$ among its roots.
\end{proposition}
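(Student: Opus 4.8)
The plan is to show that $\mL_i(n,m;q)$ contains a modular element of rank $i$ whose interval below it is a copy of a smaller, already-understood rank-metric lattice, and then to apply Stanley's modular factorization theorem (Theorem~\ref{thm:stanley}) to extract the linear factors corresponding to the claimed roots. The key observation is that the element $X := \langle e_1, \ldots, e_i \rangle_{\Fqm}$ satisfies $X \subseteq T_i(n,m;q)$, since every vector in it has at most $i$ nonzero entries and hence rank at most $i$ by the triangular inequality. By Theorem~\ref{thm:modLi}, such an $X$ is a modular element of $\mL_i(n,m;q)$. Moreover, I expect the interval $[0,X]$ to be isomorphic to $\mL_i(i,m;q) = \mL(\F_{q^m}^i)$, the full subspace lattice of $\F_{q^m}^i$, because every subspace of $X$ automatically has all its vectors of rank at most $i$.

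First I would verify that $\chi([0,X];\lambda)$ is exactly the characteristic polynomial of the subspace lattice of $\F_{q^m}^i$, which by Proposition~\ref{prop:whitneyLn} (applied with parameters $(i,i,m;q)$) equals $\prod_{j=0}^{i-1}(\lambda - q^{mj})$. This already exhibits the integers $1, q^m, \ldots, q^{(i-1)m}$ as the roots of the factor $\chi([0,X];\lambda)$. Then I would invoke Theorem~\ref{thm:stanley} with $t = X$: since $X$ is modular of rank $i$, we obtain
\begin{equation*}
    \chi(\mL_i(n,m;q);\lambda) = \chi([0,X];\lambda) \cdot \sum_{\substack{y \in \mL_i(n,m;q) \\ y \wedge_i X = 0}} \mu(y)\, \lambda^{n - \rho(y) - i}.
\end{equation*}
Because $\chi([0,X];\lambda) = \prod_{j=0}^{i-1}(\lambda - q^{mj})$ divides $\chi(\mL_i(n,m;q);\lambda)$ in $\Z[\lambda]$, and the second factor is itself a polynomial with integer coefficients, it follows immediately that $1, q^m, \ldots, q^{(i-1)m}$ are all roots of $\chi(\mL_i(n,m;q);\lambda)$.

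The main obstacle I anticipate is establishing cleanly that $[0,X] \cong \mL(\F_{q^m}^i)$ as graded lattices, so that the characteristic polynomial of the interval is the one I claimed. The subtlety is that the meet operation $\wedge_i$ in $\mL_i(n,m;q)$ differs in general from the ambient subspace meet $\wedge_n$; I would need to confirm that for subspaces $Y, Z \le X$ we have $Y \wedge_i Z = Y \cap Z$, which holds precisely because $Y \cap Z \subseteq X \subseteq T_i(n,m;q)$, so the generating set in the definition of $\wedge_i$ (see Remark~\ref{rem:Li}) recovers the entire intersection. Once this is checked, the interval is genuinely the full subspace lattice of an $i$-dimensional space and the rest follows. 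I would also need to handle the boundary cases $i = 1$ and $i = n$ separately or note that they are already covered by Propositions~\ref{prop:whitneyL1} and~\ref{prop:whitneyLn}, where the roots are visible directly from the explicit product formulas.
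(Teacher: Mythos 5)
Your proposal is correct and follows essentially the same route as the paper: take $U_i=\langle e_1,\ldots,e_i\rangle_{\F_{q^m}}\subseteq T_i(n,m;q)$, deduce modularity from Theorem~\ref{thm:modLi}, apply Stanley's factorization (Theorem~\ref{thm:stanley}), and identify $\chi([0,U_i];\lambda)$ with $\prod_{j=0}^{i-1}(\lambda-q^{mj})$. Your extra check that $\wedge_i$ agrees with the ambient intersection inside $[0,U_i]$ is a point the paper leaves implicit, and is a worthwhile clarification.
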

\begin{proof}
    Let $\smash{U_i:=\<e_1,\ldots,e_i\>_{\F_{q^m}}}$. All the elements of $U_i$ have rank less or equal than $i$. Therefore~$U_i$ is a modular element of $\mL_i(n,m;q)$ by Theorem~\ref{thm:modLi}. As an application of Theorem~\ref{thm:stanley} we obtain
    \begin{equation} \label{fff}
        \chi(\mL_i(n,m;q);\lambda)=\chi([0,U_i];\lambda)\sum_{\substack{X\in \mL_i(n,m;q)\\X\wedge U_i=0}}\mu(0,X) \, \lambda^{n-i-\dim(X)},
    \end{equation}
    where $\mu$ is the M\"obius function of $\mL_i(n,m;q)$.
    The desired result now follows from the fact that the interval $[0,U_i]$ is the lattice of subspaces of~$U_i$, whose characteristic polynomial is 
    \begin{equation*}
        \chi([0,U_i];\lambda)=\prod_{s=0}^{i-1}\left(\lambda-q^{sm}\right);
    \end{equation*}
see e.g.~\cite{stanley2011enumerative}.
    Combining this with~\eqref{fff} concludes the proof.
\end{proof}

 The following example shows that converse of Corollary~\ref{cor:stanley} does not hold in general. Recall that, by Proposition~\ref{prop:L1modular}, the rank-metric lattice $\mL_2(n,m;q)$ is \textit{not} supersolvable.

\begin{example} 
    Let $q=2$, $n=4$, and $m=3$. We have $\alpha_0(2,4,3;2)=1$. Observe that the maximum rank of an element of $\F_{2^3}^4$ is $3$ and that  
    \begin{equation*}
        \alpha_1(2,4,3;2)=\left|\left\{\<v\>_{\F_{2^3}}:v \in\F_{2^3}^4, \,  \rk(v)=3\right\}\right|
          =\qbinomial{4}{3}{2}(2^3-2)(2^3-2^2)=360.
    \end{equation*}
We also have
    \begin{equation*}
        \alpha_2(2,4,3;2)=\left|\left\{C\leq\F_{2^3}^4:\dim(C)=2, \, d(C)\geq 3\right\}\right|.
    \end{equation*}
    On the other hand, by the rank-metric Singleton bound of Theorem~\ref{thm:singleton} we have $6\leq 4(3-d+1)$ for any $2$-dimensional code of $\F_{2^3}^4$ of minimum distance $d$. This implies $d\leq 2$ and therefore $\alpha_2(2,4,3;2)=0$.  A similar argument using again the rank-metric Singleton bound of Theorem~\ref{thm:singleton} shows that $\alpha_3(2,4,3;2)=
    \alpha_4(2,4,3;2)=0$. Finally, by applying  Theorem~\ref{thm:wjalphak} we obtain 
    \begin{equation*}
        (w_j(2,4,3;2):j \in \{0,\ldots,4\})=(1, -225, 11680, -89280, 77824).
    \end{equation*}
    Therefore, by definition, we have $\chi(\mL_2(4,3;2);\lambda)=\lambda^4 - 225\lambda^3 + 11680\lambda^2 - 89280\lambda + 77824$, whose factorization is $(\lambda-1)(\lambda-8)(\lambda-64)(\lambda-152)$. However, $\mL_2(4,3;2)$ is not supersolvable by Theorem~\ref{thm:superLi}.
\end{example}

\begin{remark}
    The linear factor $\lambda-64=\lambda-2^{2 \cdot 3}$ in the previous example
    might suggest that the characteristic polynomial of $\mL_2(4,3;2)$ can be computed 
    as in the proof of Theorem~\ref{thm:polLn-1} via the chain
    $$0 \cov \langle e_1 \rangle_{\F_{2^3}} \cov \langle e_1,e_2 \rangle_{\F_{2^3}} \cov 
    \langle e_1,e_2,e_3 \rangle_{\F_{2^3}} \cov
    \F_{2^3}^4.$$
    However, the proof of Lemma~\ref{lem:rk3} shows that
    $\langle e_1,e_2,e_3 \rangle_{\F_{2^3}}$ is not modular.
    To see this in a more direct way, 
    let $1,\alpha,\beta \in \F_{2^3}$ be linearly independent over $\F_2$. Define $x=e_1+\alpha e_2 + \beta e_3$ and $z=e_1+e_4+\alpha e_2$.
    Let $U = \langle z,x+z \rangle_{\F_{2^3}}$.
    It is easy to check that $X\in \mL_2(4,3;2)$. Moreover, $\langle e_1,e_2,e_3 \rangle_{\F_{2^3}} \cap U = \langle x \rangle$, because
$z \in U \setminus \langle e_1,e_2,e_3 \rangle_{\F_{2^3}}$.
This contradicts the definition of modularity for $\langle e_1,e_2,e_3 \rangle_{\F_{2^3}}$.
\end{remark}

We now turn to the \textit{critical exponent}
of rank-metric lattices, which is one of the most important invariant of a combinatorial geometry. It was defined by Crapo and Rota in~\cite{crapo1970foundations} and it measures the largest dimension of a linear space that \textit{distinguishes}, i.e., avoids, a given collection of projective points. 
Computing the critical exponent of a combinatorial geometry is a hard problem in general, known under the name of \textit{Critical Problem}; see~\cite{crapo1970foundations}. 

In this paper, we give the definition of critical exponent directly in the context of rank-metric lattices.

\begin{definition}
\label{def:crit}
    The \textbf{critical exponent} of $\mL_i(n,m;q)$ is $$\crit(\mL_i(n,m;q))=n-\max\{k\in\Z_{\geq 0}:\alpha_k(i,n,m;q) \neq 0\},$$
    where $\alpha_k(i,n,m;q)$ is defined in Notation~\ref{notazvarie}.
\end{definition}

In the next result we explicitly compute the value of the critical exponent for some parameter sets.

\begin{theorem}
\label{thm:criticalexp}
     \begin{enumerate}[label=(\arabic*)]
        \item If $n \le m$, then $\crit(\mL_i(n,m;q))=i$.
        \item If $n >m$, then $\crit(\mL_i(n,m;q))\geq ni/m$.
        Furthermore, $\crit(\mL_i(n,m;q))=ni/m$ if  $m\mid ni$ and one of the following holds:
        \begin{enumerate}
            \item $n=tm$ and $i=m-k$ for some nonnegative integers $t, k$;
            \item $n=mk/2$ for some nonnegative integer $k$, $m$ even, and $i\in\{2,m-2\}$;
            \item $n=8$, $m=6$, $i=3$ and $q=2^h$ with $h\equiv 1 \mod 6$;
            \item $n=8$, $m=4$, $i=3$ and $q=2^h$ for some odd nonnegative integer $h$.
        \end{enumerate}
    \end{enumerate}
\end{theorem}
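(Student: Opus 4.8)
The plan is to connect the critical exponent directly to the rank-metric Singleton bound of Theorem~\ref{thm:singleton} and to the existence of Maximum Rank Distance (MRD) codes. By Definition~\ref{def:crit}, computing $\crit(\mL_i(n,m;q))$ amounts to finding the largest $k$ for which there exists a $k$-dimensional code $C \le \F_{q^m}^n$ with $d(C) \ge i+1$. First I would reformulate the Singleton bound: a code of dimension $k$ and minimum distance at least $i+1$ must satisfy $mk \le \max\{n,m\}(\min\{n,m\}-i)$, so the largest admissible $k$ is bounded above by $\lfloor \max\{n,m\}(\min\{n,m\}-i)/m \rfloor$, and hence the critical exponent is bounded \emph{below} by $n$ minus this quantity. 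The two halves of the statement then correspond to the two cases $\min\{n,m\}=n$ and $\min\{n,m\}=m$.

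For part (1), when $n \le m$ the Singleton bound reads $mk \le m(n-i)$, i.e.\ $k \le n-i$, giving $\crit(\mL_i(n,m;q)) \ge i$. For the reverse inequality I would exhibit a code meeting the bound: when $n \le m$, MRD codes of dimension $n-i$ and minimum distance exactly $i+1$ always exist (for instance Gabidulin codes), so $\alpha_{n-i}(i,n,m;q) \neq 0$, forcing $\crit(\mL_i(n,m;q)) = i$. This is the clean case and the one I expect to dispatch quickly.

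For part (2), when $n > m$ the bound gives $mk \le n(m-i)$, so $k \le n(m-i)/m = n - ni/m$, hence $\crit(\mL_i(n,m;q)) \ge ni/m$ (note $ni/m$ need not be an integer, which is why only a lower bound is claimed in general). The sharpness statements require, for each listed parameter family and under the divisibility hypothesis $m \mid ni$, constructing an actual code of dimension $n - ni/m$ with $d \ge i+1$ so that $\alpha_{n-ni/m}(i,n,m;q) \neq 0$ and the Singleton bound is attained with equality. Here I would invoke the appropriate existence results for codes meeting the Singleton bound in the regime $n > m$: case (a) $n = tm$ reduces to a direct sum / concatenation of $t$ copies of a full-space construction and should follow from standard MRD existence over the $m \times m$ square case; cases (b)--(d) are the delicate ones.

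The hard part will be cases (b), (c), and (d), where the existence of Singleton-optimal codes in the non-square regime $n > m$ is \emph{not} guaranteed by a uniform construction and instead depends on subtle number-theoretic and combinatorial coincidences (reflected in the very specific constraints: $m$ even with $i \in \{2, m-2\}$; the sporadic parameters $n=8$, $m \in \{4,6\}$ with $q$ a power of $2$ satisfying a congruence on the exponent $h$). I expect these to rely on known classifications or explicit constructions of optimal rank-metric codes for small parameters, possibly combined with the duality $C \mapsto C^\perp$ (which exchanges the roles of $i$ and $m-i$, explaining the symmetric appearance of $i \in \{2,m-2\}$) and with computational verification for the sporadic cases. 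The overall strategy is therefore: derive the lower bounds from Theorem~\ref{thm:singleton} uniformly, then match them with existence results for optimal codes case by case, with the main obstacle being the construction (or citation) of the optimal codes realizing equality in the sporadic regimes.
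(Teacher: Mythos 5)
Your proposal follows essentially the same route as the paper: both derive the lower bound on the critical exponent from the rank-metric Singleton bound (Theorem~\ref{thm:singleton}) applied to the definition of $\crit$ via the $\alpha_k$'s, observe that equality holds precisely when a Singleton-optimal code of dimension $n-\max\{n,m\}(\min\{n,m\}-i)/m$ and minimum distance $i+1$ exists, settle the case $n\le m$ by the existence of MRD (Delsarte--Gabidulin) codes, and resolve the listed families in the regime $n>m$ by appealing to the known existence results for optimal codes with non-square parameters (the paper simply cites the compilation in the literature for cases (a)--(d), where you additionally sketch the direct-sum construction for case (a)). The argument is correct and matches the paper's proof in structure and substance.
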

\begin{proof}
    We start observing that, by definition, we have
    \begin{multline}
    \label{eq:max}
        \max\{k\in\Z_{>0}:\alpha_k(i,n,m;q) \neq 0\}\\
        \begin{aligned}
        &=\max\{k\in\Z_{>0}:\exists\, C\leq\Fqmn \textup{ s.t. } \dim(C)=k \textup{ and } d(C)\geq i+1\} \nonumber \\
        &\leq \frac{\max\{n,m\}(\min\{n,m\}-i)}{m},
        \end{aligned}
    \end{multline}
    where the latter inequality follows from the rank-metric Singleton bound; see Theorem~\ref{thm:singleton}.
    Therefore, by the definition of critical exponent, we have
    \begin{equation} \label{critb}
    \crit(\mL_i(n,m;q)) \geq n-\frac{\max\{n,m\}(\min\{n,m\}-i)}{m},
    \end{equation}
    where we have equality in~\eqref{critb} if and only if there exists a code $C \leq \F_{q^m}^n$
of dimension    $\max\{n,m\}(\min\{n,m\}-i)/{m}$
and minimum distance at least $i+1$.
Again by the rank-metric Singleton bound (Theorem~\ref{thm:singleton}), this happens if and only if there exists a code $C \leq \F_{q^m}^n$
of dimension    $\max\{n,m\}(\min\{n,m\}-i)/{m}$
and minimum distance exactly $i+1$.
In the remainder of the proof we describe some parameter sets for which the existence of such a code has been established.

If $m \ge n$ then 
the existence of a code $C \le \F_{q^m}^n$
of dimension $n-i$ and minimum distance $i+1$
has been established by Delsarte~\cite{delsarte1978bilinear} for all $i$. If
$m<n$, the existence is only known for some parameters, which have been conveniently collected in~\cite[Remark~2.3]{marino2022evasive} and to which we refer (see also
\cite{bartoli2018maximum,csajbok2017maximum,bartoli2022new}).
These correspond precisely to the parameters 
listed in the statement.
\end{proof}

By combining Theorem~\ref{thm:criticalexp}
with \cite[Corollary~3.4]{ravagnani2019whitney}, we obtain a recursion for the Whitney numbers of the first kind of $\mL_i(n,m;q)$ in some parameter ranges.

\begin{proposition}
    Let $j\in\{0,\ldots,n\}$. We have 
    \begin{equation}
    \label{eq:wjcritical}
        w_j(i,n,m;q)=-\sum_{s=0}^{j-1} w_s(i,n,m;q)\qmbin{n-s}{j-s}
    \end{equation}
    if $n\leq m$ and $n-i<j$, or if 
    $n>m$ and $n(m-i)/m<j$.
\end{proposition}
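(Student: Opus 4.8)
The plan is to recognize that the claimed recursion is nothing more than the vanishing $\alpha_j(i,n,m;q)=0$ rewritten through the inversion formula of Theorem~\ref{thm:wjalphak}. First I would invoke Theorem~\ref{thm:wjalphak}(2) (the rank-metric instance of \cite[Corollary~3.4]{ravagnani2019whitney}), namely
\[
\alpha_j(i,n,m;q)=\sum_{k=0}^{j}w_k(i,n,m;q)\qmbin{n-k}{j-k}.
\]
Since $\qmbin{n-j}{0}=1$, the summand $k=j$ contributes exactly $w_j(i,n,m;q)$, so I split it off and rewrite the identity as
\[
\alpha_j(i,n,m;q)=w_j(i,n,m;q)+\sum_{s=0}^{j-1}w_s(i,n,m;q)\qmbin{n-s}{j-s}.
\]
In this form it is immediate that the desired recursion~\eqref{eq:wjcritical} is \emph{equivalent} to the single assertion $\alpha_j(i,n,m;q)=0$. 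Thus the whole problem reduces to showing that, in the stated range of $j$, there is no $j$-dimensional code of minimum distance at least $i+1$.

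To establish $\alpha_j(i,n,m;q)=0$ I would appeal directly to the rank-metric Singleton bound of Theorem~\ref{thm:singleton} (equivalently, to the critical-exponent estimate~\eqref{critb} in the proof of Theorem~\ref{thm:criticalexp}). Any $k$-dimensional code $C\le\F_{q^m}^n$ with $d(C)\ge i+1$ satisfies $mk\le\max\{n,m\}(\min\{n,m\}-i)$. In the case $n\le m$ this reads $mk\le m(n-i)$, i.e.\ $k\le n-i$; hence the hypothesis $n-i<j$ forces $\alpha_j(i,n,m;q)=0$. In the case $n>m$ it reads $mk\le n(m-i)$, i.e.\ $k\le n(m-i)/m$; hence the hypothesis $n(m-i)/m<j$ again forces $\alpha_j(i,n,m;q)=0$. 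Substituting $\alpha_j(i,n,m;q)=0$ into the split identity above yields exactly~\eqref{eq:wjcritical}.

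I do not expect a genuine obstacle, since the argument is a direct composition of two results already available in the excerpt. The only points requiring care are bookkeeping ones: verifying that the $k=j$ summand in the inversion formula is precisely $w_j(i,n,m;q)$, so that $w_j$ can be isolated with coefficient $1$, and matching the two hypothesis ranges to the two cases $n\le m$ and $n>m$ of the Singleton bound. I would also note that in the case $n>m$ only the lower bound $\crit(\mL_i(n,m;q))\ge ni/m$ of Theorem~\ref{thm:criticalexp}(2) is used — which is itself a consequence of Theorem~\ref{thm:singleton} — so the exact-value assertions of that theorem, and the associated existence hypotheses on codes, play no role here.
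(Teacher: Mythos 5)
Your proposal is correct and follows exactly the route the paper intends: the recursion is Theorem~\ref{thm:wjalphak}(2) with the $k=j$ summand (whose coefficient $\qmbin{n-j}{0}=1$) isolated, combined with the vanishing $\alpha_j(i,n,m;q)=0$ in the stated range, which is precisely what the Singleton bound of Theorem~\ref{thm:singleton} gives in the two cases $n\le m$ and $n>m$ and is the content of Theorem~\ref{thm:criticalexp} invoked by the paper. The paper offers no written proof beyond citing this combination, so there is nothing further to compare.
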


\section{On the Whitney Numbers of $\mL_2(4,4;q)$}
\label{sec:specific_chi}

In this section we concentrate on the ``smallest'' non-supersolvable rank-metric lattices whose characteristic polynomials we cannot compute by applying the results of Section~\ref{secPAP:5}, namely, lattices of the form~$\mL_2(4,4;q)$. We give explicit formulas for their characteristic polynomials, \textit{under the assumption} that their second Whitney numbers of the first kind are a polynomial in $q$. This technical assumption will allow us to follow a computational approach based on rank-metric codes, Theorem~\ref{thm:wjalphak}, and Lagrange interpolation.

\begin{notation}
\label{not:Mq}
In this section, $n=m=4$. We denote by $M(q)$ the number of $2$-dimensional MRD codes $C \le \F_{q^4}^4$ to simplify the notation.
 \end{notation}

We start by observing the following.

\begin{proposition}
\label{prop:fandstuff}
	We have:
	\begin{enumerate}[label=(\arabic*)]
		\item $\displaystyle w_2(2,4,4;q)=q^4\bbin{4}{2}{4}-\bbin{3}{1}{4} \, \sum_{j=3}^4\qbin{4}{j} \, \prod_{s=1}^{j-1}(q^4-q^s)+M(q)$;
		\item $\displaystyle W_2(2,4,4;q)=\bbin{4}{2}{4}-M(q)$.
	\end{enumerate}
	Moreover, $w_2(2,4,4;q)$ is a polynomial in $q$ if and only if $M(q)$ is a polynomial in $q$.
\end{proposition}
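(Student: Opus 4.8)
The plan is to reduce the final equivalence entirely to part~(1), since once $w_2(2,4,4;q)$ is written as an explicit expression plus $M(q)$, the ``Moreover'' claim is a purely formal consequence about polynomiality. The genuine work is therefore to prove parts~(1) and~(2); I treat part~(1) in detail, as it is the only ingredient the final equivalence actually uses, and I sketch part~(2) separately.

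First I would prove part~(1) by specializing Theorem~\ref{thm:wjalphak}(1) to $i=2$, $n=m=4$, $j=2$, which gives
\[
w_2(2,4,4;q)=\sum_{k=0}^{2}\alpha_k(2,4,4;q)\,\qmbin{4-k}{2-k}(-1)^{2-k}q^{4\binom{2-k}{2}}.
\]
I would then evaluate the three summands. For $k=0$ one has $\alpha_0=1$ (only the zero code has dimension $0$), giving the term $q^4\,\bbin{4}{2}{4}$; for $k=1$ the term is $-\alpha_1\,\bbin{3}{1}{4}$, where $\alpha_1$ counts the lines generated by vectors of rank $\ge 3$, obtained by dividing the number of rank-$j$ vectors $\qbin{4}{j}\prod_{s=0}^{j-1}(q^4-q^s)$ by $q^4-1$ for $j\in\{3,4\}$, which yields $\alpha_1=\sum_{j=3}^{4}\qbin{4}{j}\prod_{s=1}^{j-1}(q^4-q^s)$; for $k=2$ the binomial is $1$, leaving exactly $\alpha_2$. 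The only nontrivial point here is the identification $\alpha_2=M(q)$: by the rank-metric Singleton bound (Theorem~\ref{thm:singleton}) a $2$-dimensional code in $\F_{q^4}^4$ has $d\le 3$, so the condition $d\ge 3$ defining $\alpha_2$ is equivalent to $d=3$, i.e.\ to being MRD. Substituting the three values gives the formula claimed in part~(1).

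For part~(2) the plan is to compute the second-kind Whitney number $W_2$ directly as the number of $2$-dimensional elements of $\mL_2(4,4;q)$ --- that is, the $2$-dimensional subspaces admitting an $\F_{q^4}$-basis of vectors of rank at most $2$ --- and to express this as $\bbin{4}{2}{4}$ minus the number of $2$-dimensional subspaces that are \emph{not} such elements. The crux is to show that the latter are exactly the $2$-dimensional MRD codes; this is the step I expect to be the main obstacle. One inclusion is immediate: an MRD code has $d=3$, hence no nonzero vector of rank $\le 2$, so it cannot be generated by rank-$\le 2$ vectors. The converse --- that every $2$-dimensional subspace containing a nonzero vector of rank $\le 2$ in fact contains \emph{two} linearly independent such vectors --- is the delicate part, requiring a careful rank estimate on the vectors $w+\mu v$ with $\mu\in\F_{q^4}$, and it is precisely where the rank-metric behaviour diverges most sharply from the Hamming-metric analogy. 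Granting this identification, $W_2=\bbin{4}{2}{4}-M(q)$.

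Finally, the ``Moreover'' statement follows with no further combinatorics. In the formula of part~(1), the quantity
\[
P(q):=q^4\bbin{4}{2}{4}-\bbin{3}{1}{4}\sum_{j=3}^{4}\qbin{4}{j}\prod_{s=1}^{j-1}(q^4-q^s)
\]
is manifestly an element of $\Z[q]$: each Gaussian binomial $\bbin{a}{b}{4}$ is a polynomial in $q^4$ with integer coefficients and each $\qbin{4}{j}$ is a polynomial in $q$, while every factor $q^4-q^s$ is a difference of monomials. Since $w_2(2,4,4;q)=P(q)+M(q)$ with $P(q)\in\Z[q]$, the function $w_2(2,4,4;q)$ is a polynomial in $q$ if and only if $M(q)$ is, which is exactly the asserted equivalence.
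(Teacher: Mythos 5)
Your part~(1) and your derivation of the ``Moreover'' clause are correct and essentially identical to the paper's argument: the paper also specializes Theorem~\ref{thm:wjalphak} to $j=2$, uses $\alpha_0(2,4,4;q)=1$, computes $\alpha_1(2,4,4;q)$ by counting lines spanned by vectors of rank $3$ or $4$, and identifies $\alpha_2(2,4,4;q)=M(q)$ via the Singleton bound. Your justification of the polynomiality equivalence (the non-$M(q)$ part of the formula is an explicit element of $\Z[q]$) is in fact cleaner than the paper's, which somewhat gratuitously invokes $\alpha_3=\alpha_4=0$.

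The gap is in part~(2), and you located it precisely --- but be aware that it cannot be closed, because the identification you would need is false. A $2$-dimensional subspace of $\F_{q^4}^4$ containing a nonzero vector of rank at most $2$ need not contain two $\F_{q^4}$-independent such vectors: take $C=\langle e_1,\,w\rangle_{\F_{q^4}}$ with $w=(1,\alpha,\alpha^2,\alpha^3)$, where $1,\alpha,\alpha^2,\alpha^3$ are linearly independent over $\F_q$. Every vector $\lambda e_1+\mu w$ with $\mu\neq 0$ has rank at least $\rk(\mu w)-\rk(\lambda e_1)\geq 4-1=3$ by Proposition~\ref{prop:rk}, so the rank-$\leq 2$ vectors of $C$ are exactly the multiples of $e_1$, and $C\notin\mL_2(4,4;q)$ even though $d(C)=1$. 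Thus the $2$-dimensional subspaces fall into three classes --- elements of $\mL_2(4,4;q)$, MRD codes, and subspaces such as $C$ whose rank-$\leq 2$ vectors fill only a line --- and $W_2(2,4,4;q)$ is strictly smaller than $\bbin{4}{2}{4}-M(q)$; a correct formula must also subtract the size of the third class. The paper's own proof of part~(2) silently makes the same incorrect identification in the single line $W_2(2,4,4;q)=\bbin{4}{2}{4}-|\{C:d(C)>2\}|$, so your instinct that this is ``the main obstacle'' was exactly right, and no rank estimate on $w+\mu v$ will rescue it. The error is harmless downstream: Theorem~\ref{thm:usciamolo} and the interpolation of $M(q)$ rely only on part~(1) and the ``Moreover'' clause, both of which you (and the paper) prove correctly.
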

\begin{proof}
	By Theorem~\ref{thm:wjalphak}, we have 
	\begin{equation} \label{hei}
		w_2(2,4,4;q)=\sum_{k=0}^2\alpha_k(2,4,4;q)\qbinomial{4-k}{2-k}{q^4}(-1)^{2-k} \, q^{4\binom{2-k}{2}}.
	\end{equation}
	Furthermore:
	\begin{itemize}
		\item $\alpha_0(2,4,4;q)=1$;
		\item $\displaystyle\alpha_1(2,4,4;q)=\sum_{j=3}^4\qbin{4}{j} \, \prod_{s=1}^{j-1}(q^4-q^s)$;
		\item $\alpha_2(2,4,4;q)=M(q)$.
	\end{itemize}
	These expressions combined with~\eqref{hei} give the
	first part of the statement. For the second part, note that by Theorem~\ref{thm:singleton} we have $d(C)\leq 3$ for all $C\leq\F_{q^4}^4$ of dimension 2. Therefore
	\begin{equation*}
		W_2(2,4,4;q)=\bbin{4}{2}{4}-|\{C\leq\F_{q^4}^4:d(C)> 2\}|=\bbin{4}{2}{4}-M(q).
	\end{equation*}
	The last part of the statement follows from Theorem~\ref{thm:wjalphak} and the fact that $\alpha_3(2,4,4;q)=\alpha_4(2,4,4;q)=0$ by Theorem~\ref{thm:singleton}.
\end{proof}

The following observation will be crucial in our approach.

\begin{lemma}
\label{lem:constt}
Let $u,v \in \F_{2^4}^4$ be vectors of rank 4. The number of 2-dimensional MRD codes $C \le \F_{2^4}^4$ containing $u$ is the same as the number of 2-dimensional MRD codes $C \le \F_{2^4}^4$ containing $v$.
\end{lemma}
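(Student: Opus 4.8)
The plan is to exhibit a group of $\F_{2^4}$-linear rank isometries of $\F_{2^4}^4$ that acts transitively on the rank-$4$ vectors, and then use it to transport MRD codes containing $u$ to MRD codes containing $v$ bijectively. Concretely, I would work with the maps induced by right multiplication by matrices in $\GL_4(\F_2)$.

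First I would record that, for any $A\in\GL_4(\F_2)$, the map $\phi_A:\F_{2^4}^4\to\F_{2^4}^4$ given by $\phi_A(w)=wA$ is an $\F_{2^4}$-linear bijection preserving the rank of every vector. The rank-preservation follows from the matrix description of Section~\ref{sec:rml}: after fixing an $\F_2$-basis $\Gamma$ of $\F_{2^4}$, one checks the identity $\Gamma(wA)=A^{\mathsf T}\Gamma(w)$, and left multiplication by the invertible matrix $A^{\mathsf T}$ does not alter the matrix rank, whence $\rk(wA)=\rk(w)$. Consequently $\phi_A$ sends any $2$-dimensional code $C$ to a $2$-dimensional code $CA$ with the same minimum rank distance; in particular it maps $2$-dimensional MRD codes to $2$-dimensional MRD codes. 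Since $\phi_{A^{-1}}$ is its two-sided inverse, $C\mapsto CA$ is a bijection on the set of all $2$-dimensional MRD codes of $\F_{2^4}^4$.

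Next I would establish that $\GL_4(\F_2)$ acts transitively on the rank-$4$ vectors via $w\mapsto wA$. Because $n=m=4$, a vector has rank $4$ exactly when its entries form an $\F_2$-basis of $\F_{2^4}$. Thus, writing $u=(u_1,\dots,u_4)$ and $v=(v_1,\dots,v_4)$, both $\{u_k\}$ and $\{v_k\}$ are bases, so each $v_i$ admits a unique expansion $v_i=\sum_{k=1}^4 A_{k,i}\,u_k$ with $A_{k,i}\in\F_2$; the associated change-of-basis matrix $A=(A_{k,i})$ lies in $\GL_4(\F_2)$ and satisfies $uA=v$. Finally I would combine the two facts: for this $A$ and any code $C$ we have $w\in C\iff wA\in CA$, hence $u\in C\iff v\in CA$, so $\phi_A$ restricts to a bijection from the $2$-dimensional MRD codes containing $u$ onto those containing $v$, forcing the two counts to coincide. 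I do not anticipate a genuine obstacle; the only points needing care are the rank-preservation identity $\Gamma(wA)=A^{\mathsf T}\Gamma(w)$ and the transitivity step, both of which rely crucially on the hypothesis $n=m=4$ (so that rank-$4$ vectors correspond precisely to $\F_2$-bases of $\F_{2^4}$).
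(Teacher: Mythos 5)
Your proposal is correct and follows essentially the same route as the paper's proof: both exploit that the entries of a rank-$4$ vector in $\F_{2^4}^4$ form an $\F_2$-basis of $\F_{2^4}$, produce an invertible change-of-basis matrix $A$ over $\F_2$ with $uA=v$, and observe that $x\mapsto xA$ is a rank-preserving bijection carrying $2$-dimensional MRD codes containing $u$ onto those containing $v$. Your write-up is in fact slightly more detailed than the paper's (e.g., the explicit identity $\Gamma(wA)=A^{\mathsf T}\Gamma(w)$); the only minor quibble is that rank-preservation itself holds for any $A\in\GL_n(\F_q)$ and does not depend on $n=m=4$ --- only the transitivity step does.
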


\begin{proof}
    Since the entries of $u$ and $v$ both span $\F_{q^4}$ over $\F_2$, for all $j \in \{1,\ldots,i\}$ there exist 
    $\smash{a_{1,j}, \ldots a_{4,j} \in \F_2}$ with $\smash{v_j=\sum_{i=1}^4 u_ia_{i,j}}$. Let $A$ be the matrix whose $(i,j)$ entry is~$a_{i,j}$. Then~$A$ has size $4 \times 4$ and invertible.
    Moreover, $v=u \cdot A$. Therefore the mapping 
    $\F_{q^4}^4 \to \F_{q^4}^4$ given by $x \mapsto x \cdot A$
    is rank-preserving and thus induces a bijection between the
    2-dimensional MRD codes $C \le \F_{q^4}^4$ containing $u$ and the 2-dimensional MRD codes $C \le \F_{q^4}^4$ containing~$v$. This establishes the lemma.
\end{proof}

\begin{notation}
We let $\smash{\hat M(q)}$ be the quantity 
defined by the previous lemma, 
that is,~$\smash{\hat M(q)}$ is the number of $2$-dimensional MRD codes in $\smash{\F_{q^4}^4}$ containing a \textit{given} vector of rank $4$.
\end{notation}

The quantities $M(q)$ and $\hat{M}(q)$ are closely connected to each other, as the following result shows.

\begin{proposition}
\label{prop:M'}
    We have
    \begin{equation*}
       \hat M(q)=\frac{M(q)(q^3-q^2-q-1)}{q^5(q-1)^3(q+1)(q^2+q+1)}.
    \end{equation*}
\end{proposition}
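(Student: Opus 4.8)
The plan is to establish the identity by a double count of the set
\[
\mathcal{S} := \{(v,C) : v \in \F_{q^4}^4, \ \rk(v)=4, \ C \text{ a } 2\text{-dimensional MRD code}, \ v \in C\}.
\]
Since every $2$-dimensional MRD code $C \le \F_{q^4}^4$ is $\F_{q^4}$-linear, the condition $v \in C$ is equivalent to $\langle v \rangle_{\F_{q^4}} \le C$. Hence, summing over $v$ first and invoking Lemma~\ref{lem:constt} together with the definition of $\hat M(q)$, every rank-$4$ vector lies in exactly $\hat M(q)$ such codes, so that
\[
|\mathcal{S}| = |\{v \in \F_{q^4}^4 : \rk(v)=4\}| \cdot \hat M(q).
\]
The number of rank-$4$ vectors in $\F_{q^4}^4$ equals the number of invertible $4 \times 4$ matrices over $\F_q$, via the rank-preserving isomorphism $v \mapsto \Gamma(v)$ from Section~\ref{sec:rml}; this number is $\prod_{s=0}^{3}(q^4-q^s) = q^6(q^4-1)(q^3-1)(q^2-1)(q-1)$.

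For the other direction I would count $|\mathcal{S}|$ by summing over the codes $C$ the number of rank-$4$ codewords each contains. The crucial point is that this number is independent of $C$: all MRD codes with the same parameters share a common rank-weight distribution~\cite{delsarte1978bilinear}. Writing $A_4$ for the common number of rank-$4$ codewords in a $2$-dimensional MRD code of $\F_{q^4}^4$, this gives $|\mathcal{S}| = M(q)\, A_4$. Equating the two expressions for $|\mathcal{S}|$ yields
\[
\hat M(q) = \frac{M(q)\, A_4}{\prod_{s=0}^{3}(q^4 - q^s)}.
\]

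It then remains to compute $A_4$. By the rank-metric Singleton bound (Theorem~\ref{thm:singleton}) such a code has minimum distance $d = n-k+1 = 3$, so every nonzero codeword has rank $3$ or $4$, and I would extract $A_4$ from the closed form for the weight distribution of MRD codes, specialized to $n=m=4$, $k=2$, $d=3$, $w=4$. This gives $A_4 = (q^4-1)\big((q^4+1)-(q^3+q^2+q+1)\big) = q(q^4-1)(q^3-q^2-q-1)$. Substituting this value and cancelling the common factor $q(q^4-1)$, then using $(q^3-1)(q^2-1)(q-1) = (q-1)^3(q+1)(q^2+q+1)$, produces exactly the claimed formula.

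The main obstacle is the determination of $A_4$: one must justify that the number of rank-$4$ codewords is the same for every $2$-dimensional MRD code (so that the second count of $|\mathcal{S}|$ is well defined) and then evaluate it. Both facts rest on the classical result that MRD codes of given length, dimension and field parameters have a uniquely determined rank-weight distribution; granting this, the remainder of the argument is elementary counting and algebraic simplification.
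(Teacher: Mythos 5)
Your proposal is correct and follows essentially the same route as the paper: both double-count the set of pairs $(C,v)$ with $C$ a $2$-dimensional MRD code and $v\in C$ of rank $4$, using Lemma~\ref{lem:constt} on one side and the fact that all $2$-dimensional MRD codes in $\F_{q^4}^4$ have the same number of rank-$4$ codewords (the paper cites the known MRD weight distribution for this, just as you do) on the other. Your value $A_4=q(q^4-1)(q^3-q^2-q-1)$ agrees with the paper's count $q^8-1-\qbin{4}{1}(q^4-1)$, and the remaining simplification is identical.
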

\begin{proof}
    We count the elements of the set 
    \begin{equation}
    \label{eq:(C,x)}
        \Sigma:=\{(C,v) : C \le \F_{q^4}^4 \textup{ is MRD}, \, \dim(C)=2, \, v \in \F_{q^4}^4, \, \rk(v)=2\}
    \end{equation}
    in two ways. 
    Since the number of rank-4 vectors in 
    $\F_{q^4}^4$ is 
    $(q^4-1)(q^4-q)(q^4-q^2)(q^4-q^3)$, by Lemma~\ref{lem:constt} we have
    \begin{equation}
    \label{eq:Mhat}
        |\Sigma|= \hat M(q)(q^4-1)(q^4-q)(q^4-q^2)(q^4-q^3).
    \end{equation}
    Now fix a $2$-dimensional MRD code $C$ in $\smash{\F_{2^4}^4}$
    and recall for every such $C$ the number of rank-4 vectors $v \in C$ is the same and is computed, for example,
    in~\cite[Corollary~28]{de2018weight}. Therefore,
    \begin{equation}
    \label{eq:M}
    \begin{aligned}
    |\Sigma| &=M(q)\left(q^8-1-\qbin{4}{1}(q^4-1)\right)\\
    &=M(q) \, q \, (q-1) \, (q+1) \, (q^2+1)\, (q^3-q^2-q-1).
    \end{aligned}
    \end{equation}
The statement follows from comparing~\eqref{eq:Mhat} with~\eqref{eq:M}.
\end{proof}

We now start working under the assumption that
$w_2(2,4,3;q)$ is a polynomial in $q$. By Proposition~\ref{prop:fandstuff}, this is equivalent to assuming that $M(q)$ is a polynomial in $q$.

The values of $\hat{M}(q)$ for some small values of $q$ (we managed up to $q=9$) can be computed using algebra software combined with an argument that we outline very briefly.
We let $\alpha$ be a primitive element of $\F_{q^4}$
and $v:=(1,\alpha,\alpha^2,\alpha^3)$ be a vector of rank $4$.
Then $\smash{\hat{M}(q)}$ is the number of 2-dimensional MRD codes
$\smash{C \le \F_{q^4}^4}$ containing $v$.
We consider the code whose generator matrix is
\begin{equation*}
    \begin{pmatrix}
    1 & \alpha & \alpha^2 & \alpha^3 \\ 0 & 1 & A & B
    \end{pmatrix}
\end{equation*}
for some $A,B\in\F_{q^4}^4$ such that $\{1,A,B\}$ is a set of elements that are linearly independent over $\Fq$. Observe that the matrices 
\begin{equation*}
    \begin{pmatrix}
    1 & \alpha & \alpha^2 & \alpha^3 \\ 0 & 1 & A & B
\end{pmatrix}, \qquad
\begin{pmatrix}
    1 & \alpha & \alpha^2 & \alpha^3 \\ 0 & 1 & A' & B'
\end{pmatrix}
\end{equation*}
generate the same code if and only if $A=A'$
and $B=B'$. Therefore to compute $\hat{M}(q)$ it suffices 
to count all the pairs $(A,B)$ such that, for all non-zero $\gamma \in \F_{q^4}$,
$$(1,\alpha,\alpha^2,\alpha^3) + (0,\gamma,\gamma A, \gamma B)$$ has rank 3 or 4. This is equivalent to saying
that, for all $i \in \{1,\ldots,q^4-1\},$
$$(1,\alpha,\alpha^2,\alpha^3) + \alpha^i(0,1, A, B)$$
has rank 3 or 4. Using the matrix representation of vectors one can finally show that
$\hat{M}(q)$ is the number of 
$(A_1,A_2,A_3,A_4,B_1,B_2,B_3,B_4) \in \F_q^8$ such that
$$I + \begin{pmatrix}
    0 & 0 & 0 & 0 \\
    1 & 0 & 0 & 0\\
    A_1 & A_2 & A_3 & A_4 \\
    B_1 & B_2 & B_3 & B_4
\end{pmatrix} \cdot M^i$$
has rank 3 or 4 for all $i \in \{1,\ldots,q^4-1\}$, where $M$ is the companion matrix of minimal polynomial of $\alpha$ over $\F_q$. 
By combining this computational approach
with Proposition~\ref{prop:M'}
we obtain the following result.

\begin{lemma}
\label{lem:values}
The values of $M(q)$ for $q \in \{2,3,4,5,7,8\}$ are the following:
\begin{itemize}
\setlength\itemsep{0em}
    \item $M(2)=1344$,
    \item $M(3)=6368544$,
    \item $M(4)=998645760$,
    \item $M(5)=43710000000$,
    \item $M(7)=11599543859904$,
    \item $M(8)=103734668427264$.
\end{itemize}
\end{lemma}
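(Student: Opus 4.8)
The plan is to execute, for each $q \in \{2,3,4,5,7,8\}$, the computational procedure described immediately before the lemma statement, thereby obtaining $\hat M(q)$, and then to recover $M(q)$ from $\hat M(q)$ by inverting the relation established in Proposition~\ref{prop:M'}. The conceptual content of the reduction (from counting $2$-dimensional MRD codes through a fixed rank-$4$ vector to counting certain tuples over $\F_q$) has already been worked out in the preceding discussion, so the proof of the lemma amounts to carrying out the resulting finite computation and substituting the outputs into a single formula.

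First I would fix, for each such $q$, a primitive element $\alpha$ of $\F_{q^4}$ together with its minimal polynomial over $\F_q$, and form the associated companion matrix $M$. I would then enumerate all tuples $(A_1,\ldots,A_4,B_1,\ldots,B_4)\in\F_q^8$ and, for each, build the matrix
\begin{equation*}
    N:=\begin{pmatrix}
    0 & 0 & 0 & 0 \\
    1 & 0 & 0 & 0\\
    A_1 & A_2 & A_3 & A_4 \\
    B_1 & B_2 & B_3 & B_4
\end{pmatrix},
\end{equation*}
testing whether $I+N\cdot M^i$ has rank $3$ or $4$ for every $i\in\{1,\ldots,q^4-1\}$. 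By the reduction recalled above, the number of tuples passing this test is exactly $\hat M(q)$.

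Having computed $\hat M(q)$, I would recover $M(q)$ by solving the identity of Proposition~\ref{prop:M'} for $M(q)$, namely
\begin{equation*}
    M(q)=\frac{\hat M(q)\, q^5(q-1)^3(q+1)(q^2+q+1)}{q^3-q^2-q-1},
\end{equation*}
which is legitimate since $q^3-q^2-q-1\neq 0$ for every $q\geq 2$. Substituting the six computed values of $\hat M(q)$ then produces the six values of $M(q)$ asserted in the statement.

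The main obstacle is purely computational rather than conceptual: the exhaustive search ranges over $q^8$ candidate tuples and performs $q^4-1$ rank tests per candidate, so the cost grows quickly with $q$ and for $q=8$ already involves on the order of $10^{7}$ candidates. To keep the search feasible one would represent field arithmetic through the companion matrix $M$, precompute the powers $M^i$, and evaluate each rank test directly over $\F_q$; implementing this carefully is the only real difficulty. As a consistency check, one verifies that each resulting $M(q)$ is a nonnegative integer, as it must be, which guards against off-by-one or normalization errors in the reduction or in the inversion of Proposition~\ref{prop:M'}.
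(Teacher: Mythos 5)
Your proposal matches the paper's own argument essentially verbatim: the paper likewise obtains these values by exhaustively enumerating the tuples $(A_1,\ldots,A_4,B_1,\ldots,B_4)\in\F_q^8$ via the companion-matrix rank tests to get $\hat M(q)$, and then inverts Proposition~\ref{prop:M'} to recover $M(q)$. The only point worth adding is that the enumeration must also discard tuples for which $\{1,A,B\}$ is $\F_q$-linearly dependent (equivalently, one must additionally test the codewords proportional to $(0,1,A,B)$), a condition stated in the paper's preceding discussion but not captured by the rank tests on $I+N\cdot M^i$ alone.
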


Our next step is to use the previous values to interpolate $M(q)$ under the polynomiality assumption. The putative  degree
of $M(q)$ is too large (up to 16, as we will explain later)
for this to be possible. We therefore proceed
by constructing a polynomial of smaller degree which we can interpolate with the values computed in Lemma~\ref{lem:values}.
We start by recalling the following folklore result from elementary algebra.

\begin{lemma}
\label{lem:div}
Let $P(x), Q(x) \in \Z[x]$ be polynomials. If $\{a \in \Z \mid P(a)/Q(a) \in \Z\} =+\infty$, then $Q(x)$ divides $P(x)$ in $\Z[x]$.
\end{lemma}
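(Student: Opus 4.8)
The plan is to reduce the statement to the vanishing of a remainder in polynomial division, and then to control that remainder by a growth estimate. First I would perform Euclidean division in $\mathbb{Q}[x]$, writing $P(x) = Q(x)S(x) + R(x)$ with $S,R \in \mathbb{Q}[x]$ and either $R = 0$ or $\deg R < \deg Q$. The goal then becomes to show $R = 0$: once this is known, $P = QS$ exhibits $Q$ as a divisor of $P$ in $\mathbb{Q}[x]$, and the passage to $\mathbb{Z}[x]$ is handled separately (see below). Note that $Q$ has only finitely many roots, so all but finitely many of the admissible integers $a$ satisfy $Q(a) \neq 0$, and I may freely restrict to those.

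To show $R = 0$, I would fix a positive integer $c$ such that $cS(x)$ and $cR(x)$ lie in $\mathbb{Z}[x]$ (a common denominator of the coefficients of $S$ and $R$). Multiplying the division identity by $c$ yields an equality $cP(x) = Q(x)\,cS(x) + cR(x)$ in $\mathbb{Z}[x]$, which I would evaluate at the integers $a$ in the hypothesised infinite set. For each such $a$ we have $P(a) = Q(a)k_a$ for some $k_a \in \mathbb{Z}$, so substituting and rearranging gives $cR(a) = Q(a)\bigl(c\,k_a - cS(a)\bigr)$. Here $cS(a) \in \mathbb{Z}$ because $cS \in \mathbb{Z}[x]$, so the bracketed factor is an integer; hence $Q(a) \mid cR(a)$ and in particular $cR(a)/Q(a) \in \mathbb{Z}$.

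Now I would invoke the degree bound. If $R \neq 0$ then $\deg(cR) < \deg Q$, so $|cR(a)/Q(a)| \to 0$ as $|a| \to \infty$. Since the infinite set of admissible $a$ contains values of arbitrarily large absolute value, for all sufficiently large such $a$ the integer $cR(a)/Q(a)$ has absolute value strictly less than $1$ and is therefore $0$. Thus $cR(a) = 0$ for infinitely many $a$, which forces $cR \equiv 0$ and hence $R \equiv 0$, as desired. This establishes $Q \mid P$ in $\mathbb{Q}[x]$.

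The step I expect to be the most delicate is the upgrade from divisibility in $\mathbb{Q}[x]$ to divisibility in $\mathbb{Z}[x]$: a priori the quotient $S = P/Q$ has only rational coefficients, and integrality of $P(a)/Q(a)$ alone does not force integer coefficients (for instance $P = x$, $Q = 2$ satisfies the hypothesis yet $2 \nmid x$ in $\mathbb{Z}[x]$). The clean way around this is Gauss's Lemma: writing contents multiplicatively, $S \in \mathbb{Z}[x]$ follows as soon as $Q$ is primitive, and in particular whenever $Q$ is monic. This is exactly the situation in the intended application, where the denominator appearing in Proposition~\ref{prop:M'} is monic. I would therefore either record primitivity of $Q$ as a standing hypothesis or apply Gauss's Lemma under that condition to conclude $Q \mid P$ in $\mathbb{Z}[x]$.
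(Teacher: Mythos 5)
The paper gives no proof of this lemma at all --- it is introduced as a ``folklore result from elementary algebra'' and used immediately --- so there is no argument of the authors' to compare yours against; what you have written is essentially the proof the paper implicitly relies on. Your argument for divisibility in $\mathbb{Q}[x]$ is correct and standard: Euclidean division $P = QS + R$, clearing denominators by $c$, the observation that $Q(a) \mid cR(a)$ for infinitely many integers $a$ with $Q(a) \neq 0$, and the growth estimate $|cR(a)/Q(a)| \to 0$ forcing $cR$ to vanish at infinitely many integers and hence identically. More importantly, you are right that the lemma as literally stated is false: $P = x$, $Q = 2$ (or $P = x^2 + x$, $Q = 2x$ if one insists on a non-constant $Q$) satisfies the hypothesis, yet $Q \nmid P$ in $\Z[x]$. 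The statement needs $Q$ to be primitive, after which Gauss's Lemma upgrades divisibility from $\mathbb{Q}[x]$ to $\Z[x]$ exactly as you describe; and, as you observe, in the paper's application (Proposition~\ref{prop:M'} and the surrounding discussion) the relevant denominators $q^5(q-1)^3(q+1)(q^2+q+1)$ and $q^3 - q^2 - q - 1$ are monic, so the conclusion the authors actually need is unaffected. Your proposal is therefore not only complete but repairs an imprecision in the statement.
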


By combining Lemma \ref{lem:div} with Proposition~\ref{prop:M'},
we deduce that if $M(q)$ is a polynomial in $q$, then $\hat{M}(q)$ is a polynomial in $q$ as well.
Under these assumptions, write
$$M(q) = \frac{\hat{M}(q) \, q^5 (q-1)^3 (q+1) (q^2+q+1)}{q^3-q^2-q-1}$$
and observe that
$q^5(q^3-1)(q^2-1)(q-1)$ and $q^3-q^2-q-1$ are coprime in $\Z[q]$, which implies that $q^3-q^2-q-1$ divides
$\hat M(q)$ as a polynomial. Therefore we may write $\hat M(q) = Z(q)(q^3-q^2-q-1)$, for some $Z(q) \in \Z[q]$ of degree at most 5. The bound on the degree follows from the fact that
$$M(q) \le \qbinomial{4}{2}{q^4} = q^{16} + \mbox{lower order terms in $q$},$$
which gives $\deg (M(q)) \le 16$ and thus $\deg (\hat{M}(q)) \le 8$. By definition, we have
\begin{equation*}
    Z(q) = \frac{M(q)}{q^5(q^3-1)(q^2-1)(q-1)}.
\end{equation*}
Since $Z(q)$ has degree at most 5, it can be interpolated using
the 6 values for $M(q)$ computed in Lemma~\ref{lem:values}.
The passages
are elementary and 
the final outcome is
\begin{equation*}
     Z(q) =  \frac{1}{2} \, (q^5-q^4-q^3-q^2),
\end{equation*}
which in turn gives
\begin{equation} \label{eeqq}
    M(q) = \frac{1}{2} \, q^7(q^3-1)(q^2-1)(q-1)  (q^3-q^2-q-1).
\end{equation}
Finally, by applying Proposition~\ref{prop:fandstuff} and Theorem~\ref{thm:singleton}, after lengthy computations one obtains the following result.

\begin{theorem}
\label{thm:usciamolo}
If $w_2(2,4,4;q)$ is a polynomial in $q$, then
\begin{align*}
w_0(2,4,4;q) &= 1, \\
w_1(2,4,4;q) &= -\left(q^8 + q^7 + 2q^6 - q^3 + 1\right),  \\
w_2(2,4,4;q) &= \frac{1}{2} \, \left( q^{16} + 3q^{14} + q^{13} + 3q^{12} + q^{11} + q^{10} - q^9 + q^7 + 4q^6 - 2q^3\right), \\ 
w_3(2,4,4;q) &= -\frac{1}{2}\left( q^{20} + 3q^{18} + q^{17} + 2q^{16} - q^{15} + 3q^{12} + 2q^{11} + q^{10} - q^9 - 2q^8 - q^7\right), \\
w_4(2,4,4;q) &= \frac{1}{2}\left(q^{20} + 3q^{18} + q^{17} + q^{16} - q^{15} - 3q^{14} - q^{13} + q^{11}\right).
\end{align*}
In particular, we have
\begin{multline*}
    \chi(\mL_2(4,4;q);\lambda)=(\lambda-1) (\lambda-q^4) \Biggl(\lambda^2-(q^8 + q^7 + 2q^6 - q^4 - q^3)\lambda\\+\frac{1}{2}\left(q^{16} + 3q^{14} + q^{13} + q^{12} - q^{11} - 3q^{10} - q^9 + q^7\right)\Biggr).
\end{multline*}
\end{theorem}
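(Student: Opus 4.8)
The plan is to compute each Whitney number $w_j(2,4,4;q)$ directly from Theorem~\ref{thm:wjalphak}(1) and then assemble the characteristic polynomial from them. Under the hypothesis that $w_2(2,4,4;q)$ is a polynomial in $q$, Proposition~\ref{prop:fandstuff} guarantees that $M(q)$ is a polynomial, and the interpolation argument carried out above pins it down to the explicit expression in~\eqref{eeqq}. The only remaining input is therefore the full list of the counting numbers $\alpha_k(2,4,4;q)$, after which every $w_j$ becomes an explicit (if unwieldy) expression in Gaussian binomials.

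First I would determine the $\alpha_k(2,4,4;q)$. Setting $n=m=4$ and $i=2$ in the rank-metric Singleton bound (Theorem~\ref{thm:singleton}), a code of dimension $k$ with minimum distance at least $3$ must satisfy $4k\le 4(4-3+1)=8$, so $k\le 2$; hence $\alpha_3(2,4,4;q)=\alpha_4(2,4,4;q)=0$. The zero code gives $\alpha_0(2,4,4;q)=1$. A one-dimensional code $\langle v\rangle_{\Fqm}$ has minimum distance $\rk(v)$, so $\alpha_1(2,4,4;q)$ counts the projective points of rank $3$ or $4$, namely $\sum_{j=3}^{4}\qbin{4}{j}\prod_{s=1}^{j-1}(q^4-q^s)$; and $\alpha_2(2,4,4;q)=M(q)$, where the bound forces $d=3$ so that these codes are exactly the $2$-dimensional MRD codes. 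These are precisely the values already recorded in the proof of Proposition~\ref{prop:fandstuff}.

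Next I would substitute these values, together with $M(q)$ from~\eqref{eeqq}, into
\begin{equation*}
w_j(2,4,4;q)=\sum_{k=0}^{j}\alpha_k(2,4,4;q)\,\qbinomial{4-k}{j-k}{q^4}(-1)^{j-k}q^{4\binom{j-k}{2}},
\end{equation*}
for $j\in\{0,1,2,3,4\}$. For $j=0$ this gives $w_0=1$ immediately; for $j=1,3,4$ only the terms with $k\le 2$ survive, and for $j=2$ one may equivalently start from the closed form already provided in Proposition~\ref{prop:fandstuff}. Expanding the base-$q^4$ Gaussian binomials, the products $\prod_{s}(q^4-q^s)$, and $M(q)$, and then collecting terms in $q$, yields the five stated formulas. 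I expect this expansion-and-collection step to be the main obstacle: it is the ``lengthy computation'' alluded to in the statement, purely mechanical but error-prone, and best verified with computer algebra. A useful consistency check is that the a priori high-degree contributions cancel massively (for instance the degree-$24$ terms coming from the $k=0,1$ summands of $w_4$), bringing $w_2$ down to degree $16$ and $w_3,w_4$ down to degree $20$ as claimed.

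Finally I would assemble $\chi(\mL_2(4,4;q);\lambda)=\sum_{j=0}^{4}w_j(2,4,4;q)\,\lambda^{4-j}$, which is monic of degree $4$. By Proposition~\ref{prop:roots} (with $i=2$ and $m=4$) the integers $1$ and $q^4$ are roots, so the monic quadratic $\lambda^2-(1+q^4)\lambda+q^4=(\lambda-1)(\lambda-q^4)$ divides $\chi$. Writing $\chi=(\lambda^2-(1+q^4)\lambda+q^4)(\lambda^2+a\lambda+b)$ and matching the coefficients of $\lambda^3$ and $\lambda^0$ gives $a=w_1(2,4,4;q)+1+q^4$ and $b=w_4(2,4,4;q)/q^4$; substituting the formulas just obtained for $w_1$ and $w_4$ produces exactly the quadratic factor displayed in the statement, completing the proof.
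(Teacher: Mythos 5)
Your proposal is correct and follows essentially the same route as the paper: it takes the interpolated value of $M(q)$ from~\eqref{eeqq}, feeds the counts $\alpha_0=1$, $\alpha_1=\sum_{j=3}^4\qbin{4}{j}\prod_{s=1}^{j-1}(q^4-q^s)$, $\alpha_2=M(q)$, $\alpha_3=\alpha_4=0$ (via the Singleton bound) into Theorem~\ref{thm:wjalphak}, and carries out the same ``lengthy computations'' the paper alludes to. Your use of Proposition~\ref{prop:roots} to peel off the factor $(\lambda-1)(\lambda-q^4)$ and solve for the remaining quadratic is a tidy way to organize the final factorization and matches the modularity observation the paper makes in the remark following the theorem.
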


\begin{remark}
    It is interesting to observe that the final formula of Theorem~\ref{thm:usciamolo} is compatible with the modularity of
    $\langle e_1,e_2 \rangle_{\F_{q^4}}$ in $\mL_2(4,4;q)$, which implies that $(\lambda-1)(\lambda-q^4)$ is always a factor of
    $\chi(\mL_2(4,4;q);\lambda)$ by Theorem~\ref{thm:stanley}. We could also computationally obtain the value of $M(9)$ and it coincides with the prediction made by Equation~\eqref{eeqq}.
\end{remark}

As an application of Theorem~\ref{thm:usciamolo}, we compute the characteristic polynomial of $\mL_2(4,4;q)$ for some values of $q$, under the assumption that its second Whitney number of the first kind is a polynomial in $q$. The final results are
summarized in Table~\ref{tab:valueschi}.

\begin{center}
\renewcommand\arraystretch{1.3}
\begin{longtable}[c]{|D{0.04\linewidth}|D{0.7\linewidth}|}
\hline
    $q$   & $\chi(\mL_2(4,4;q);\lambda)$ \\\noalign{\global\arrayrulewidth 1.2pt}
    \hline
    \noalign{\global\arrayrulewidth0.4pt}
    $2$ &  $(\lambda-1)(\lambda-16)(\lambda^2 - 488\lambda + 60736)$\\\hline
    $3$ & $(\lambda-1)(\lambda-81)(\lambda^2 - 10098\lambda + 29574801)$\\\hline
    $4$ & $(\lambda-1)(\lambda-256)(\lambda^2 - 89792\lambda + 2588286976)$\\\hline
    $5$ & $(\lambda-1)(\lambda-625)(\lambda^2 - 499250\lambda + 86141640625)$\\\hline
    $7$ & $(\lambda-1)(\lambda-2401)(\lambda^2 - 6820898\lambda + 17687732901601)$\\\hline
    $8$ & $(\lambda-1)(\lambda-4096)(\lambda^2 - 19394048\lambda + 147637824126976)$\\\hline
    $9$ & $(\lambda-1)(\lambda-6561)(\lambda^2 - 48885282\lambda + 962216318765601)$ \\\hline
\caption{The characteristic polynomial of $\mL_2(4,4;q)$, under the assumption that its second Whitney number of the first kind is a polynomial in $q$. We display the unique factorization of the putative polynomial.\label{tab:valueschi}}
\end{longtable}
\renewcommand{\arraystretch}{1}
\end{center}

\section{Lattice-Rank Weights of Rank-Metric Codes} \label{sec:last}

In this section we show an application of rank-metric lattices to coding theory. 
We introduce new structural invariants (sometimes called \textit{distinguishers}) for rank-metric codes based on rank-metric lattices. We call these invariants  \textit{lattice-rank weights} and show that they extend the \textit{generalized vector rank weights} introduced in \cite{ravagnani2016generalized}. 
In our approach, the elements 
of the rank-metric lattice $\mL_i(n,m;q)$
play the same role that optimal anticodes play in~\cite{ravagnani2016generalized}.
In fact, the elements of 
$\mL_1(n,m;q)$ are precisely the optimal anticodes of~\cite{ravagnani2016generalized}.

In the second part of this section we then define the
 notions of \textit{lattice binomial moments} and \textit{lattice rank weight distribution} that are naturally associated with the invariants we introduce. We derive derive the corresponding \textit{MacWilliams-type identities} and discuss some families of rank-metric codes that are extremal with respect to these new notions.

\begin{notation}
Throughout this section we work with fixed 
$n,m\geq 2$, which we generally don't 
remember in the notation to improve readability.
We  also fix a code  $C\leq\Fqmn$ of dimension $k$ and minimum distance $d$. All dimensions are computed over $\F_{q^m}$, unless otherwise stated. 
\end{notation}

We recall the notion of equivalent codes.

\begin{definition}
    We say that the code $C,D\leq\Fqmn$ are \textbf{equivalent} if there exists an $\Fqm$-linear map $\varphi:\Fqm\rightarrow \Fqm$ such that
    $\rk(v)=\rk(\varphi(v))$ for all $\smash{v \in \F_{q^m}^n}$ and 
    $\varphi(C)=D$.
\end{definition}

Studying the invariants of codes is a natural way of investigating their structure. Several invariants are 
particularly relevant for applications due to their property of distinguishing between inequivalent codes. The latter problem is central not only in coding theory but also in some related research areas. For example, 
code distinguishers 
provide an attack tool in code-based cryptography. We propose the 
following invariants.

    \begin{definition}
       For $i\in\{1,\ldots,n\}$ and $j\in\{1,\ldots,k\}$, 
       the $(i,j)$\textbf{-th  lattice-rank weight} of $C$ is the number 
       $$\ell_j^{(i)}(C):=\min\{\dim(X):X \in \mL_i(n,m;q), \, \dim(C\cap X)\geq j\}.$$
       The $(i,j)$-\textbf{th dual lattice-rank weight} of $C$ is the number 
       $$\hat \ell_j^{(i)}(C):=\min\{\dim(X^\perp):X \in \mL_i(n,m;q), \, \dim(C\cap X)\geq j\}.$$
    \end{definition}

\begin{remark}
\label{rem:dualLi}
    If $i\in\{1,n\}$ or $m\in\{2,\ldots,i\}$, then the dual of an element of $\mL_i(n,m;q)$ is still an element of $\mL_i(n,m;q)$. However, this property does not  hold in general. Examples can be easily found.
\end{remark}

For ease of notation, we write $\smash{\ell_j^{(i)}}$, $\smash{\ell_j^{(i)\perp}}$, $\smash{\hat\ell_j^{(i)}}$ and $\smash{\ell_j^{(i)\perp}}$ instead of $\smash{\ell_j^{(i)}(C)}$, $\smash{\ell_j^{(i)}(C^\perp)}$, $\smash{\hat\ell_j^{(i)}(C)}$ and $\smash{\hat\ell_j^{(i)}(C^\perp)}$ respectively.

We also recall the following definition of generalized rank-weights proposed in~\cite{kurihara2015relative} and later characterized via anticodes in~\cite{ravagnani2016generalized}.

\begin{definition}
\label{def:mj}
    For $j\in\{1,\ldots,k\}$, the $j$-\textbf{th generalized rank weight} of $C$ is 
    \begin{equation*}
        m_j(C):=\min\{\dim(A):A\in\mathcal{A}, \,  \dim(A\cap C)\geq j\},
    \end{equation*}
    where $\mathcal{A}$ denotes the set of  $\Fqm$-subspaces of $\Fqmn$ that have a basis made of vectors with entries in $\F_q$.
\end{definition}

Again, we write $m_j$ instead of $m_j(C)$. We have the following result which, in particular, shows that the  lattice rank weights extend the generalized rank weights as a code invariant.

    \begin{theorem}
    \label{thm:propl}
        The following hold.
        \begin{enumerate}[label=(\arabic*)]
            \item\label{item1:propl} $\ell_j^{(1)}=m_j$ for all $j\in\{1,\ldots,k\}$.
            \item\label{item2:propl} $\ell_1^{(i)}=\left\lceil\frac{d}{i}\right\rceil$ for all $i\in\{1,\ldots,n\}$.
            \item\label{item3:propl} $\ell_j^{(i)}\leq n$ for all $i\in\{1,\ldots,n\}$ and $j\in\{1,\ldots,k\}$.
            \item\label{item4:propl} $\ell_j^{(i)} < \ell_{j+1}^{(i)}$ for all $i\in\{1,\ldots,n\}$ and  $j\in\{1,\ldots,k-1\}$.
            \item\label{item5:propl} $\ell_j^{(i+1)}\leq \ell_{j}^{(i)}$ for all $i\in\{1,\ldots,n-1\}$ and $j\in\{1,\ldots,k\}$.
            \item\label{item6:propl} $\ell_j^{(i)}\leq n-k+j$ for all $i\in\{1,\ldots,n\}$ and  $j\in\{1,\ldots,k\}$.
            \item\label{item7:propl} $\ell_j^{(i)}\geq \left\lceil\frac{d}{i}\right\rceil +j-1$ for all $i\in\{1,\ldots,n\}$ and $j\in\{1,\ldots,k\}$. 
        \end{enumerate}
    \end{theorem}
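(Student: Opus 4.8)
The plan is to dispatch the definitional and monotonicity statements first and then concentrate on the two genuinely quantitative claims, items~\ref{item2:propl} and~\ref{item4:propl}, from which the remaining lower bound~\ref{item7:propl} falls out at once. Item~\ref{item1:propl} is just an unwinding of definitions: by Remark~\ref{rem:basis} the family $\mathcal{A}$ of Definition~\ref{def:mj} coincides with $\mL_1(n,m;q)$, so the minimizations defining $m_j$ and $\ell_j^{(1)}$ range over the same set. Item~\ref{item3:propl} is immediate by taking $X=\Fqmn$, which lies in every $\mL_i(n,m;q)$ (the rank-$1$ vectors span the whole space) and satisfies $\dim(C\cap X)=k\ge j$. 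Item~\ref{item5:propl} follows from the inclusion $\mL_i(n,m;q)\subseteq\mL_{i+1}(n,m;q)$, since enlarging the admissible set of subspaces can only lower the minimum. For item~\ref{item6:propl} I would take the coordinate subspace $U=\langle e_1,\ldots,e_{n-k+j}\rangle\in\mL_1(n,m;q)\subseteq\mL_i(n,m;q)$ and apply the Grassmann dimension formula, $\dim(C\cap U)\ge\dim(C)+\dim(U)-n=j$, so that $U$ witnesses $\ell_j^{(i)}\le n-k+j$.

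The core is item~\ref{item2:propl}, which I would establish by a two-sided estimate on $\ell_1^{(i)}=\min\{\dim X:X\in\mL_i(n,m;q),\ C\cap X\neq 0\}$. For the upper bound, pick a codeword $c\in C$ of minimal rank $d$ and write $c=c_1+\cdots+c_d$ with each $c_s$ of rank $1$, using Lemma~\ref{lem:sumrkj}. Grouping these summands into $\lceil d/i\rceil$ consecutive blocks of size at most $i$ produces vectors of rank at most $i$ by the triangular inequality (Proposition~\ref{prop:rk}), whose span $X$ therefore lies in $\mL_i(n,m;q)$, contains $c$, and has dimension at most $\lceil d/i\rceil$; hence $\ell_1^{(i)}\le\lceil d/i\rceil$. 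For the lower bound, let $X\in\mL_i(n,m;q)$ with a nonzero codeword $c\in C\cap X$, and fix a basis $v_1,\ldots,v_\ell$ of $X$ with $\rk(v_s)\le i$, where $\ell=\dim X$. Writing $c=\sum_s a_s v_s$ and combining the triangular inequality with the scalar-invariance of rank (Proposition~\ref{prop:rk}) gives $d\le\rk(c)\le\sum_s\rk(a_sv_s)\le i\ell$, so $\dim X\ge\lceil d/i\rceil$. The two bounds together yield the claimed equality.

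Item~\ref{item4:propl} is the strict monotonicity of $\ell_j^{(i)}$ in $j$, which I would obtain by a peeling argument. Choose $X\in\mL_i(n,m;q)$ realizing $\ell_{j+1}^{(i)}$, so $\dim(C\cap X)\ge j+1$, and fix a basis of $X$ consisting of vectors of rank at most $i$. Deleting one basis vector yields $X'\in\mL_i(n,m;q)$ with $\dim X'=\dim X-1$; since $X'$ is a hyperplane of $X$, the intersection with $C$ drops by at most one dimension, so $\dim(C\cap X')\ge j$ and thus $\ell_j^{(i)}\le\ell_{j+1}^{(i)}-1$. Finally, item~\ref{item7:propl} follows by iterating item~\ref{item4:propl}, which forces $\ell_j^{(i)}\ge\ell_1^{(i)}+(j-1)$ because the $\ell_j^{(i)}$ form a strictly increasing sequence of integers, and then substituting $\ell_1^{(i)}=\lceil d/i\rceil$ from item~\ref{item2:propl}.

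I expect no serious obstacle here, as the whole statement reduces to the properties of the rank weight collected in Proposition~\ref{prop:rk} and Lemma~\ref{lem:sumrkj} together with elementary dimension counting. The only point requiring genuine care is the block-grouping construction in the upper bound of item~\ref{item2:propl}: one must verify that each block, being a sum of at most $i$ rank-$1$ vectors, really has rank at most $i$, and that the $\lceil d/i\rceil$ blocks span a subspace in $\mL_i(n,m;q)$ that indeed contains $c$.
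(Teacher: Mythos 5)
Your proof is correct and follows essentially the same route as the paper: item~\ref{item1:propl} via Remark~\ref{rem:basis}, item~\ref{item2:propl} by identifying the minimal dimension of an element of $\mL_i(n,m;q)$ meeting $C$ nontrivially with the minimal number of rank-$\le i$ summands of a codeword (your two-sided estimate just spells out what the paper asserts in one line), item~\ref{item4:propl} by passing to a codimension-one element of $\mL_i(n,m;q)$ inside an optimal $X$ and using the Grassmann formula, and item~\ref{item7:propl} by iterating the strict monotonicity. The only divergence is item~\ref{item6:propl}: the paper deduces $\ell_j^{(i)}\le n-k+j$ from the strictly increasing chain $\ell_j^{(i)}<\cdots<\ell_k^{(i)}\le n$ of items~\ref{item3:propl} and~\ref{item4:propl}, whereas you exhibit the explicit witness $\langle e_1,\ldots,e_{n-k+j}\rangle_{\Fqm}\in\mL_1(n,m;q)\subseteq\mL_i(n,m;q)$; both arguments are valid.
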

    \begin{proof}
        We establish the various properties separately.
        \begin{enumerate}[label=(\arabic*)]
            \item Proposition \ref{prop:L1modular} implies that the elements of $\mL_1(n,m;q)$ are precisely the $\Fqm$-subspaces of $\Fqmn$ with a basis made of vectors with entries in $\F_q$.
            The statement follows from Remark~\ref{rem:basis} and Definition~\ref{def:mj}.
            
            \item Every $v\in C$ can be expressed as the sum of no less than $\left\lceil\rk(v)/i\right\rceil$ element of rank less or equal then $i$. Hence we have
            \begin{align*}
                \ell_1^{(i)}&=\min\{\dim(X):X\in \mL_i(n,m;q)\mid\dim(C\cap X)\geq 1\}\\
                &=\min\left\{\left\lceil\frac{\rk(v)}{i}\right\rceil:v\in C\right\}=\left\lceil\frac{d}{i}\right\rceil.
            \end{align*}
        
            \item The property follows from the fact that $\dim(X)\leq n$ for any $X\in \mL_i(n,m;q)$.
        
            \item Let $X\in \mL_i(n,m;q)$ such that $\dim(X\cap C)\geq j+1$ and $\smash{\dim(X)=\ell_j^{(i)}}$. Take an element $Y\in \mL_i(n,m;q)$ such that $Y\leq X$ and $\smash{\dim(Y)=\dim(X)-1=\ell_j^{(i)}-1}$. We will prove that $\dim(Y\cap C)\geq j$. Note that, since $Y\leq X$, we have $Y\cap C=Y\cap(X\cap C)$ and $Y+(X\cap C)\leq X$. Therefore,
            \begin{align*}
                \dim(Y\cap C)&=\dim(Y\cap(X\cap C))\\
                &=\dim(Y)+\dim(X\cap C)-\dim(Y+(X\cap C))\\
                &\geq \dim(Y)+\dim(X\cap C)-\dim(X)\\
                &=\ell_j^{(i)}-1+j+1-\ell_j^{(i)},
            \end{align*}
            which implies $\dim(Y\cap C)\geq j$ and concludes the proof.
        
            \item The property follows from the definition of $(i,j)$-th lattice-rank weight.
            
            \item By \ref{item3:propl} and \ref{item4:propl} we have an increasing chain $\smash{\ell_j^{(i)}<\ell_{j+1}^{(i)}<\cdots<\ell_k^{(i)}\leq n}$. It follows that $\smash{\ell_j^{(i)}+k-j\leq n}$.
            
            \item By \ref{item2:propl} and \ref{item4:propl} we have the decreasing chain $\smash{\ell_j^{(i)}>\ell_{j-1}^{(i)}>\cdots>\ell_1^{(i)}=\left\lceil\frac{d}{i}\right\rceil}$. It follows that $\smash{\ell_j^{(i)}\geq \ell_1^{(i)}+j-1}$.
        \end{enumerate}
        
        This concludes the proof.
    \end{proof}

    \begin{remark}
        Observe that, by part~\ref{item2:propl} of Theorem~\ref{thm:propl}, we have that $\smash{\ell_1^{(i)}=1}$ for all $i\geq d$. This shows that  $\smash{\ell_1^{(i)}=\ell_1^{(i+1)}}$ for $i \ge d$ and therefore the bound in part \ref{item5:propl} of Theorem~\ref{thm:propl} is sometimes met with equality if $j=1$. Notice that the same bound can be met with equality also for $j\neq 1$. Consider, for example, $\smash{C:=\<e_1,e_2\>_{\F_{q^m}}\leq\Fqmn}$. It is not difficult to check that $\smash{\ell_1^{(i)}=1}$ and  $\smash{\ell_2^{(i)}=2}$ for all $i\in \{1,\ldots,n\}$, which shows that $\smash{\ell_2^{(i)}=\ell_2^{(i+1)}}$ for all $i\in \{1,\ldots,n\}$.
    \end{remark}
    
    In the following example we show that  
    lattice-rank weights can  distinguish between inequivalent codes when the ``fundamental'' codes parameters (length, dimension, minimum distance, and generalized weights) don't.
    
    \begin{example}
        Let $\alpha$ be the primitive element of $\F_{2^4}$ with $\alpha^4+\alpha+1=0$ and consider the  codes
        \begin{equation*}
            C:=\<(1,0,\alpha^5,\alpha),(0,1,\alpha^{14},\alpha^{13})\>, \qquad D:=\<(1,0,\alpha^6,\alpha^{10}),(0,1,\alpha^7,\alpha^3)\>,
        \end{equation*}
        where all the spans in this example are over $\F_{2^4}$. 
        The following hold.
        \begin{enumerate}[label=(\arabic*)]
            \item $\rk((1,0,\alpha^5,\alpha))=\rk((0,1,\alpha^{14},\alpha^{13}))=\rk((1,0,\alpha^6,\alpha^{10}))=\rk((0,1,\alpha^7,\alpha^3))=3$
            \item $\ell_1^{(1)}(C)=2$. This value is attained, for example, by the space $\<(1,1,0,0),(0,0,1,1)\>$ of $\mL_1(4,4;2)$, whose intersection with $C$ is $\<(1,1,\alpha^{12},\alpha^{12})\>$.
            \item $\ell_1^{(1)}(D)=2$. This value is attained, for example, by the space $\<(1,0,1,1),(0,1,0,1)\>$ of $\mL_1(4,4;2)$, whose intersection with $C$ is $\<(1,\alpha^6,1,\alpha^{13})\>$.
            \item $\ell_2^{(1)}(C)=\ell_2^{(1)}(D)=4$. This means that the only element of $\mL_1(4,4;2)$ that intersects~$C$ and~$D$ in a subspace of dimension at lest $2$ is the full space $\F_{2^4}^4$.
        \end{enumerate}
        
        Therefore, by Theorem~\ref{thm:propl}, $C$ and $D$ have the same ``fundamental'' code parameters. On the other hand, the following hold.
        
        \begin{enumerate}[label=(\arabic*)]
        \setcounter{enumi}{4}
            \item $\ell_1^{(2)}(C)=1$. This value is attained, for example, by the element $\<(1,\alpha^6,0,1)\>$ of~$\mL_2(4,4;2)$.
            \item $\ell_1^{(2)}(D)=1$. This value is attained, for example, by the element $\<(1,\alpha^6,1,\alpha^{13})\>$ of~$\mL_2(4,4;2)$.
            \item $\ell_2^{(2)}(C)=2$. This is because $C\in\mL_2(4,4;2)$, since $C=\<(1,1,\alpha^{12},\alpha^{12}),(1,\alpha^6,0,1)\>$.
            \item $\ell_2^{(2)}(D)=3$. This is because $D\notin\mL_2(4,4;2)$ and the smallest element of $\mL_2(4,4;2)$ containing $D$ is $\<(1,0,0,\alpha^{11}),(0,1,0,\alpha^{14}),(0,0,1,\alpha^8)\>$.
        \end{enumerate}
        This allows us to conclude that $C$ and $D$ are inequivalent codes, even though they share the same generalized rank weights. 
    \end{example}

    We now introduce the notions of \textit{weight distribution} and \textit{binomial moments} associated to the lattice-rank weights of a code. We show that partial information about the former provides partial information about the latter. These two notions capture structural properties of the underlying code 
    that have been investigated before 
    in the coding theory literature; see, for example, \cite{duursma2003combinatorics,blanco2018rank,byrne2020rank,byrne2021tensor}. In particular, the binomial moments have been used to define and study the \textit{zeta function} of codes; see \cite{duursma2003combinatorics} for the Hamming metric and \cite{blanco2018rank,byrne2019tensor} for the rank metric. 
    
    In the context outlined above, we also derive the MacWilliams identities for rank-metric codes associated to the lattice-rank weights. These identities provide a connection between the invariants of a code and those of its dual. 

    \begin{definition}
        Let $i\in\{1,\ldots,n\}$ and $j\in\{1,\ldots,k\}$. The $(i,j)$\textbf{-th lattice-rank distribution} of $C$ is define as the integer vector whose $(u+1)$-th component, for $u\in\{0,\ldots,n\}$, is 
        \begin{equation*}
            A_{j,u}^{(i)}(C):=\sum_{\scriptsize\begin{matrix} X\in \mL_i(n,m;q)\\ \dim(X)=u\end{matrix}} A_{j,X}^{(i)}(C),
        \end{equation*}
        where, for  $X\in \mL_i(n,m;q)$,
        $$A_{j,X}^{(i)}(C):=|\{D\leq C\cap X:\dim(D)=j \textup{ and } \nexists \;Y\in \mL_i(n,m;q) \textup{ s.t. } D\leq Y<X\}|.$$
    \end{definition}
    
    \begin{definition}
        Let $i\in\{1,\ldots,n\}$, $j\in\{1,\ldots,k\}$ and $u\in\{0,\ldots, n\}$. The $(i,j,u)$\textbf{-th  lattice binomial moment} of $C$ is defined as 
        \begin{equation*}
            B_{j,u}^{(i)}(C):=\sum_{\substack{X\in \mL_i(n,m;q)\\ \dim(X)=u}} B_{j,X}(C),
        \end{equation*}    
    where  
    \begin{equation*}
    B_{j,X}(C)=\qmbin{\dim(C\cap X)}{j} \quad \mbox{for all $X\in \mL_i(n,m;q)$.} 
    \end{equation*}
    \end{definition}
    
To further simplify the notation, in the sequel we will write
$\smash{A_{j,u}^{(i)}}$ and $\smash{B_{j,u}^{(i)}}$ instead of~$\smash{A_{j,u}^{(i)}(C)}$ and~$\smash{B_{j,u}^{(i)}(C)}$, respectively. The following result shows that $\smash{B_{j,u}^{(i)}}$ is fully determined by the code parameters, for $u$ in some intervals.

    \begin{proposition}
    \label{prop:Bju}
        We have
        \begin{equation*}
            B_{j,u}^{(i)}=\begin{cases}
            0 & \textup{ if } u<\ell_j^{(i)},\\
            \qmbin{k+u-n}{j}|\{X\in \mL_i(n,m;q):\dim(X)=u\}| & \textup{ if } u>n- \hat\ell_1^{(i)\perp}.
            \end{cases}
        \end{equation*}
    \end{proposition}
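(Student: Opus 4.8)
The plan is to compute the sum $B_{j,u}^{(i)}=\sum_{X\in\mL_i(n,m;q),\,\dim(X)=u}\qmbin{\dim(C\cap X)}{j}$ by controlling the single quantity $\dim(C\cap X)$ uniformly over all lattice elements $X$ of dimension $u$, treating the two ranges of $u$ separately. The first case is immediate from the definitions: if $u<\ell_j^{(i)}$, then by the minimality built into the definition of $\ell_j^{(i)}$ no $X\in\mL_i(n,m;q)$ with $\dim(X)=u$ can satisfy $\dim(C\cap X)\ge j$, so $\dim(C\cap X)\le j-1$ for every such $X$. Since $\qmbin{a}{j}=0$ whenever $0\le a<j$, each summand $\qmbin{\dim(C\cap X)}{j}$ vanishes and hence $B_{j,u}^{(i)}=0$ (the degenerate case of an empty sum is covered as well).

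For the second case I would first reduce the claim to the single geometric assertion that $C+X=\Fqmn$ for every $X\in\mL_i(n,m;q)$ with $\dim(X)=u$. Indeed, Grassmann's formula gives $\dim(C\cap X)=k+u-\dim(C+X)$, so $C+X=\Fqmn$ is equivalent to $\dim(C\cap X)=k+u-n$; once this holds for every such $X$, the summand $\qmbin{\dim(C\cap X)}{j}=\qmbin{k+u-n}{j}$ becomes independent of $X$ and factors out of the sum, yielding $\qmbin{k+u-n}{j}\,\lvert\{X\in\mL_i(n,m;q):\dim(X)=u\}\rvert$, which is exactly the claimed value. To establish $C+X=\Fqmn$ I would pass to orthogonal complements: $C+X=\Fqmn$ if and only if $(C+X)^\perp=C^\perp\cap X^\perp=\{0\}$. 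Thus it suffices to show that $u>n-\hat\ell_1^{(i)\perp}$ forces $C^\perp\cap X^\perp=\{0\}$ for every $X\in\mL_i(n,m;q)$ of dimension $u$.

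The crux, and the step I expect to require the most care, is the translation of the hypothesis $u>n-\hat\ell_1^{(i)\perp}$ into this orthogonality statement. Rewriting it as $n-u<\hat\ell_1^{(i)\perp}$ and unfolding the definition of the first dual lattice-rank weight $\hat\ell_1^{(i)\perp}=\hat\ell_1^{(i)}(C^\perp)$, the inequality says precisely that no element $X\in\mL_i(n,m;q)$ whose orthogonal complement has dimension at most $n-u$ (equivalently $\dim(X)\ge u$) can meet $C^\perp$ through its complement; in particular every $X$ of dimension exactly $u$ satisfies $C^\perp\cap X^\perp=\{0\}$, as needed. The delicate point is the bookkeeping between a lattice element and its orthogonal complement: because $\mL_i(n,m;q)$ is not closed under duality in general (Remark~\ref{rem:dualLi}), one must phrase the minimality in the definition of $\hat\ell_1^{(i)\perp}$ through $\dim(X^\perp)$ rather than $\dim(X)$, and check that the codimension threshold $n-u$ matches the one coming from the dual weight. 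I would isolate this equivalence as a short observation before splitting into cases, after which both parts assemble at once.
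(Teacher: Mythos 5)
Your proposal is correct and follows essentially the same route as the paper: the first case is dispatched by the minimality in the definition of $\ell_j^{(i)}$ together with $\qmbin{a}{j}=0$ for $a<j$, and the second case hinges on showing $C^\perp\cap X^\perp=\{0\}$ for every $X\in\mL_i(n,m;q)$ of dimension $u$ and then applying Grassmann's formula (the paper writes this as $\dim(C\cap X)=n-\dim(C^\perp+X^\perp)=\dim(C^\perp\cap X^\perp)+k+u-n$, which is the same computation you phrase via $C+X=\Fqmn$). Your explicit flagging of how the inequality $n-u<\hat\ell_1^{(i)\perp}$ must be unfolded against the definition of the dual lattice-rank weight is, if anything, slightly more careful than the paper, which simply asserts the implication.
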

    \begin{proof}
        If $u<\ell_j^{(i)}$, then for all $X\in \mL_i(n,m;q)$ such that $\dim(X)=u$ we have
        \begin{equation*}
            \qmbin{\dim(C\cap X)}{j}=0.
        \end{equation*}
        On the other hand, if $\smash{n-u<\hat\ell_1^{(i)\perp}}$ then $C^\perp\cap X^\perp=\{0\}$ and the following holds:
        \allowdisplaybreaks
        \begin{align*}
            B_{j,u}^{(i)}(C)&=\sum_{\substack{ X\in \mL_i(n,m;q)\\ \dim(X)=u}}\qmbin{\dim(C\cap X)}{j}\\
            &=\sum_{\substack{ X\in \mL_i(n,m;q)\\ \dim(X)=u}}\qmbin{n-\dim(C^\perp+ X^\perp)}{j}\\
             &=\sum_{\substack{ X\in \mL_i(n,m;q)\\ \dim(X)=u}}\qmbin{\dim(C^\perp\cap X^\perp)+k+u-n}{j}\\
              &=\sum_{\substack{ X\in \mL_i(n,m;q)\\ \dim(X)=u}}\qmbin{k+u-n}{j}\\
            &=\qmbin{k+u-n}{j}|\{X\in \mL_i(n,m;q):\dim(X)=u\}|.
        \end{align*}
        This concludes the proof.
    \end{proof}
  
    \begin{remark}
        We have that $B_{j,u}^{(i)}=A_{j,u}^{(i)}=0$ for all $u< \ell_j^{(i)}$.
    \end{remark}
    
  The following result shows the relation between the invariants we just defined. Its proof is similar to the one of \cite[Theorem~6.4]{byrne2021tensor} and  therefore we omit it.
    
    \begin{theorem}
    \label{thm:BuAv}
        Let $i\in\{1,\ldots,n\}$, $j\in\{1,\ldots,k\}$ and $u,v\in\{0,\ldots, n\}$. Define  
        \begin{equation*}
          f_i(u,v) = |\{ (X,Y) \in \mL_i(n,m;q) \times \mL_i(n,m;q) : Y \leq X, \,  \dim(X)=u, \,  \dim(Y)=v\}|
        \end{equation*}
        and denote by $\mu^{(i)}$ the M\"obius function of the lattice $\mL_i(n,m;q)$.
        The following hold.
        \begin{enumerate}[label=(\arabic*)]
            \item 
            $\displaystyle B_{j,u}^{(i)}=\sum_{v=0}^uA_{j,v}^{(i)} \, f_i(u,v)$.
            \item $\displaystyle A_{j,v}^{(i)}=\sum_{u=0}^v\mu^{(i)}(u,v)B_{j,u}^{(i)}\, f_i(v,u)$.
        \end{enumerate} 
    \end{theorem}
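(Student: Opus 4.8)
The plan is to prove part~(1) as a double-counting identity and then to deduce part~(2) from it by M\"obius inversion over $\mL_i(n,m;q)$, exactly in the spirit of \cite[Theorem~6.4]{byrne2021tensor}. Throughout I fix $j$ and regard $(A_{j,v}^{(i)})_v$ and $(B_{j,u}^{(i)})_u$ as two vectors indexed by the possible dimensions $0,\ldots,n$. The whole identity will then be read as an invertible linear relation between these two vectors whose kernel is built from the flag numbers $f_i$ and the M\"obius function $\mu^{(i)}$.

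First I would establish the pointwise relation
\begin{equation*}
B_{j,X}(C)=\sum_{\substack{Y\in\mL_i(n,m;q)\\ Y\le X}}A_{j,Y}^{(i)}(C)\qquad\text{for every }X\in\mL_i(n,m;q).
\end{equation*}
The idea is to partition the $\qmbin{\dim(C\cap X)}{j}$ subspaces $D\le C\cap X$ of dimension $j$ according to the smallest element $Y\in\mL_i(n,m;q)$ still containing $D$: by definition such a $D$ is counted by $A_{j,Y}^{(i)}(C)$ precisely when $Y$ is its minimal support, and any minimal support of a $D\le C\cap X$ automatically satisfies $Y\le X$. Summing this identity over all $X$ with $\dim(X)=u$, and then regrouping the right-hand side by the value $v=\dim(Y)$, one collects for each $Y$ the number of $X\ge Y$ with $\dim(X)=u$; repackaging these incidences as the flag numbers $f_i(u,v)$ produces $B_{j,u}^{(i)}=\sum_{v=0}^{u}A_{j,v}^{(i)}f_i(u,v)$, which is part~(1).

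For part~(2) I would note that, since $f_i(u,v)=0$ whenever $v>u$, the system of equations furnished by part~(1) is triangular in the dimension index and hence invertible. Inverting it is then a formal M\"obius-inversion computation on $\mL_i(n,m;q)$: the inverse of the incidence kernel $f_i$ is the kernel $f_i$ twisted by $\mu^{(i)}$, and this yields $A_{j,v}^{(i)}=\sum_{u=0}^{v}\mu^{(i)}(u,v)\,B_{j,u}^{(i)}\,f_i(v,u)$.

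I expect the main obstacle to be the pointwise step, not the inversion. The delicate point is that, for $2\le i\le n-1$, the family of elements of $\mL_i(n,m;q)$ containing a fixed subspace $D$ need not be closed under the meet $\wedge_i$ (recall that $X\wedge_i Y$ only retains the vectors of rank $\le i$), so $D$ may fail to admit a \emph{unique} minimal support. Making the partition above rigorous therefore requires either isolating a canonical minimal support or, following \cite[Theorem~6.4]{byrne2021tensor}, reorganizing the count directly in terms of flags $Y\le X$ so that these multiplicities are absorbed correctly into $f_i(u,v)$. This incidence bookkeeping is exactly where the combinatorics of the rank-metric lattice must be used; once it is settled, the triangularity in part~(1) and the M\"obius inversion in part~(2) are purely formal.
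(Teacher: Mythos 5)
Your high-level strategy---a pointwise identity $B_{j,X}=\sum_{Y\le X}A_{j,Y}^{(i)}$, summation over $X$ of fixed dimension, then M\"obius inversion---is the one the paper gestures at by citing Theorem~6.4 of the tensor-codes reference; the paper itself omits the proof entirely. However, your proposal has a genuine gap exactly at the step you yourself flag as delicate, and the issue is not merely a missing verification: for $2\le i\le n-1$ the pointwise identity is false. Your partition assigns to each $j$-dimensional $D\le C\cap X$ a minimal element of $\mL_i(n,m;q)$ containing it, but such a minimal element need not be unique. For instance, write a rank-$4$ vector $x=x_1+x_2+x_3+x_4$ as a sum of rank-$1$ vectors that are linearly independent over $\Fqm$ (Lemma~\ref{lem:sumrkj}); then $\langle x_1+x_2,\,x_3+x_4\rangle_{\Fqm}$ and $\langle x_1+x_3,\,x_2+x_4\rangle_{\Fqm}$ are two distinct $2$-dimensional elements of $\mL_2(n,m;q)$ containing $\langle x\rangle_{\Fqm}$, and both are minimal, since no atom of $\mL_2(n,m;q)$ can contain a vector of rank $4$. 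Hence $D=\langle x\rangle_{\Fqm}$ (when $x\in C$) is counted at least twice in $\sum_{Y\le X}A_{1,Y}^{(2)}(C)$ for any $X$ above both spaces, while $B_{1,X}(C)$ counts it once. Unlike $\mL_1$ and $\mL_n$, the family of elements of $\mL_i$ containing a fixed subspace is not closed under intersection (the meet $\wedge_i$ discards vectors of rank $>i$), so there is no canonical minimal support; acknowledging this obstacle, as you do, does not remove it, and the rest of the argument cannot proceed until it is resolved.

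There is a second, independent problem in your ``repackaging'' step. After summing the (putative) pointwise identity over all $X$ with $\dim(X)=u$ and regrouping by $Y$, what appears is $\sum_{Y}A_{j,Y}^{(i)}\cdot\bigl|\{X\in\mL_i(n,m;q): X\ge Y,\ \dim(X)=u\}\bigr|$, a per-element count, whereas $f_i(u,v)$ as defined in the statement counts all flags $(X,Y)$ of the given dimensions. These differ by a factor of the order of the number of $Y$'s of dimension $v$: already for $i=n$ the correct coefficient of $A_{j,j}^{(n)}$ is $\qmbin{n-j}{u-j}$, while $f_n(u,j)=\qmbin{n}{j}\qmbin{n-j}{u-j}$. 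To make the identity come out one must at minimum show that the per-element count, and likewise the M\"obius values $\mu_{\mL_i}(Y,X)$, depend only on $\dim(Y)$ and $\dim(X)$---regularity properties that are immediate for $i\in\{1,n\}$ but are nowhere addressed in your proposal, and without which the notation $\mu^{(i)}(u,v)$ on integers is not even meaningful. So the inversion in part~(2) is not ``purely formal'': both the well-definedness of the kernels and the validity of part~(1) require substantive combinatorial input that is missing.
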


The following can be seen as the MacWilliam identities for lattice-rank weights. We derive them 
 passing through the lattice binomial moments. Using Theorem~\ref{thm:BuAv}, an analogous result can be derived for the lattice-rank distribution. MacWilliam-types identities in this form have already been established in the coding theory literature in several contexts. The proof of this result is similar to the one of \cite[Theorem~6.7]{byrne2021tensor} and we therefore omit it.

\begin{theorem}
	The following holds for all $i,j\in\{1,\ldots,k\}$ and $u\in\{1,\ldots,n\}$.
	\begin{equation*}
		B_{j,u}^{(i)}=\sum_{p=0}^iq^{mp(k+u-n-i+p)}\qmbin{k+u-n}{i-p}\sum_{\substack{ X\in \mL_i(n,m;q)\\ \dim(X)=u}}\qmbin{\dim(C^\perp\cap X^\perp)}{j}
	\end{equation*}
\end{theorem}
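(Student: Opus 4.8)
The plan is to reduce the identity to the $q$-Vandermonde convolution of Lemma~\ref{lem:bin}(2), applied term by term over the lattice, exactly in the spirit of the tensor-code computation in \cite{byrne2021tensor}. First I would unfold the definition and write $B_{j,u}^{(i)}=\sum_{X}\qmbin{\dim(C\cap X)}{j}$, where the sum runs over all $X\in\mL_i(n,m;q)$ with $\dim(X)=u$. The first substantive step is the duality relation already exploited in the proof of Proposition~\ref{prop:Bju}: for each such $X$ one has $\dim(C\cap X)=n-\dim(C^\perp+X^\perp)$ via $(C\cap X)^\perp=C^\perp+X^\perp$, and hence, by the dimension formula applied to $C^\perp$ and $X^\perp$,
\begin{equation*}
\dim(C\cap X)=\dim(C^\perp\cap X^\perp)+k+u-n.
\end{equation*}
This rewrites every Gaussian binomial occurring in $B_{j,u}^{(i)}$ with a top entry of the form $\dim(C^\perp\cap X^\perp)+(k+u-n)$, i.e.\ split into a part depending on $X$ and the constant part $k+u-n$.

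Next I would apply Lemma~\ref{lem:bin}(2) with $Q=q^m$ to expand this binomial as a convolution, taking $a=\dim(C^\perp\cap X^\perp)$, $b=k+u-n$, and the appropriate value of the third parameter. The lemma produces a finite sum over an index $p$ of a power of $q^m$ times a product of two Gaussian binomials, exactly one of which still depends on $X$. Since both the sum over $p$ and the sum over $X$ are finite, I would interchange them, pulling the $X$-independent factor $\qmbin{k+u-n}{\,\cdot\,}$ together with the power $q^{mp(\,\cdot\,)}$ outside the inner sum. What remains inside is precisely $\sum_{X}\qmbin{\dim(C^\perp\cap X^\perp)}{\,\cdot\,}$, and collecting the external factors yields the stated right-hand side.

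The only real work is the index bookkeeping in the convolution: one must verify that the exponent $mp(k+u-n-i+p)$ and the coefficient $\qmbin{k+u-n}{i-p}$ arise from the correct instance of Lemma~\ref{lem:bin}(2), taking care to track which of the two lower indices is paired with the summation variable $p$ and which with the fixed weight parameter, and that the summation range for $p$ is the asserted one. I expect this matching of indices to be the main (though routine) obstacle, together with checking that no terms are lost when $k+u-n<0$, where the convention for Gaussian binomials in the Definition forces the relevant summands to vanish automatically. By contrast, the duality step and the interchange of the two finite sums are purely formal and present no difficulty.
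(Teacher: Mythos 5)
Your method is exactly the one the paper intends: the authors omit the proof, deferring to the analogous computation for tensor binomial moments in \cite{byrne2021tensor}, and that computation is precisely your chain of duality, $q$-Vandermonde expansion via Lemma~\ref{lem:bin}(2), and interchange of two finite sums. The duality step $\dim(C\cap X)=\dim(C^\perp\cap X^\perp)+k+u-n$ is correct and is the same one used in the proof of Proposition~\ref{prop:Bju}. However, the ``index bookkeeping'' you deferred is exactly where the argument and the printed statement part ways. Applying Lemma~\ref{lem:bin}(2) with $Q=q^m$, $a=\dim(C^\perp\cap X^\perp)$, $b=k+u-n$, and third parameter $c=j$ (it must be $j$, since the binomial being expanded is $\qmbin{\dim(C\cap X)}{j}$; the parameter $i$ never enters the binomial, only the choice of lattice), then summing over $X$ and swapping sums, you obtain
\begin{equation*}
B_{j,u}^{(i)}=\sum_{p=0}^{j} q^{mp(k+u-n-j+p)}\qmbin{k+u-n}{j-p}\sum_{\substack{X\in\mL_i(n,m;q)\\ \dim(X)=u}}\qmbin{\dim(C^\perp\cap X^\perp)}{p}.
\end{equation*}
This differs from the displayed theorem in two ways: the outer sum runs to $j$ rather than $i$ (with $j$ appearing in the exponent and in the Gaussian coefficient), and the inner Gaussian binomial has lower index $p$, not $j$. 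As printed, the inner sum of the theorem does not depend on $p$, so the right-hand side factors as a product of a $p$-sum and an $X$-sum; a quick sanity check with $C=\F_{q^m}^n$ (so $C^\perp=0$ and $\qmbin{0}{j}=0$ for $j\ge 1$) shows the printed right-hand side vanishes while $B_{j,u}^{(i)}\neq 0$. So your strategy is sound and proves a correct identity, but it does not land on the statement as written --- the statement carries typographical index errors --- and your proposal asserts agreement with the printed formula at precisely the step you declined to carry out.
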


We conclude this section by characterizing some classes of rank-metric codes that exhibit interesting rigidity properties
with respect to lattice-rank weights. The following is the vector-analogue of the family of $j$-TBMD codes; see \cite[Definition~4.1]{byrne2020rank} for matrix codes and~\cite[Definition~6.9]{byrne2021tensor} for higher-order tensors. The latter are a special class of tensor rank-metric whose \textit{tensor binomial moments} are fully determined by the code parameters.

\begin{definition}
	Let $i\in \{1,\ldots,n\}$ and $j\in \{1,\ldots,k\}$. We say that $C$ is a $(i,j)$-\textbf{LBMD} code (short for \textbf{lattice binomial moment determined}) code if 
    \begin{equation*}
        n-\ell_j^{(i)}-\hat\ell_1^{(i)\perp}<0.
    \end{equation*}
\end{definition}

As an immediate consequence of Proposition~\ref{prop:Bju} and Theorem~\ref{thm:BuAv}, we have that the lattice binomial moments and the lattice-rank distribution of $C$ associated with the lattice $\mL_i(n,m;q)$ are fully determined by its code parameters. The following two results are straighforward consequences of Theorem~\ref{thm:propl} and Proposition~\ref{prop:Bju}.

\begin{proposition}
\label{prop:LBMDimpl}
	Let $i\in \{1,\ldots,n\}$ and $j\in \{1,\ldots,k\}$. If $C$ is $(i,j)$-LBMD then $C$ is $(i-1,j)$-LBMD and $(i,j+1)$-LBMD.
\end{proposition}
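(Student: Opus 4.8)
The plan is to unwind the definition of LBMD and reduce both implications to the monotonicity properties of the lattice-rank weights already recorded in Theorem~\ref{thm:propl}. By definition, $C$ is $(i,j)$-LBMD precisely when $n-\ell_j^{(i)}-\hat\ell_1^{(i)\perp}<0$, i.e. when $\ell_j^{(i)}+\hat\ell_1^{(i)\perp}>n$. So both conclusions amount to checking that the left-hand side of this strict inequality does not decrease when we pass from the pair $(i,j)$ to the pairs $(i-1,j)$ and $(i,j+1)$. Throughout I would read the statement with the natural constraints $i\geq 2$ (so that $(i-1,j)$ makes sense) and $j\leq k-1$ (so that $(i,j+1)$ makes sense).

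First I would dispatch the $(i,j+1)$-LBMD claim, which is the easier of the two. The dual term $\hat\ell_1^{(i)\perp}$ does not depend on $j$, so it is literally the same quantity in both the hypothesis and the desired conclusion. Part~\ref{item4:propl} of Theorem~\ref{thm:propl} gives $\ell_{j+1}^{(i)}>\ell_j^{(i)}$, whence $\ell_{j+1}^{(i)}+\hat\ell_1^{(i)\perp}>\ell_j^{(i)}+\hat\ell_1^{(i)\perp}>n$, which is exactly the $(i,j+1)$-LBMD condition.

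For the $(i-1,j)$-LBMD claim I would use monotonicity in the first index. Part~\ref{item5:propl} of Theorem~\ref{thm:propl}, applied with $i-1$ in place of $i$, yields $\ell_j^{(i-1)}\geq\ell_j^{(i)}$. The remaining ingredient is the analogous monotonicity for the dual weight, namely $\hat\ell_1^{(i-1)\perp}\geq\hat\ell_1^{(i)\perp}$. Combining these two inequalities gives $\ell_j^{(i-1)}+\hat\ell_1^{(i-1)\perp}\geq\ell_j^{(i)}+\hat\ell_1^{(i)\perp}>n$, which is the $(i-1,j)$-LBMD condition.

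The hard part will be the dual monotonicity $\hat\ell_1^{(i-1)\perp}\geq\hat\ell_1^{(i)\perp}$, since Theorem~\ref{thm:propl} is stated only for the primal lattice-rank weights. I would establish it by a short containment argument rather than citing Theorem~\ref{thm:propl} directly: every vector of rank at most $i-1$ has rank at most $i$, so every atom, and hence every element, of $\mL_{i-1}(n,m;q)$ also lies in $\mL_i(n,m;q)$; that is, $\mL_{i-1}(n,m;q)\subseteq\mL_i(n,m;q)$. Consequently the feasible set $\{X\in\mL_{i-1}(n,m;q):\dim(C^\perp\cap X)\geq 1\}$ is contained in $\{X\in\mL_i(n,m;q):\dim(C^\perp\cap X)\geq 1\}$, and minimizing $\dim(X^\perp)$ over the larger set can only produce a smaller or equal value, giving $\hat\ell_1^{(i)\perp}\leq\hat\ell_1^{(i-1)\perp}$. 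This is the exact dual analogue of part~\ref{item5:propl}, proved by the same reasoning with $\dim(X^\perp)$ in place of $\dim(X)$, and with it the proposition follows.
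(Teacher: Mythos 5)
Your proof is correct and is precisely the argument the paper has in mind: the paper omits the proof, stating only that the result is a straightforward consequence of Theorem~\ref{thm:propl}, and your use of parts~\ref{item4:propl} and~\ref{item5:propl} is exactly that. The one detail you rightly supply that the paper glosses over is the dual monotonicity $\hat\ell_1^{(i-1)\perp}\geq\hat\ell_1^{(i)\perp}$, and your containment argument $\mL_{i-1}(n,m;q)\subseteq\mL_i(n,m;q)$ establishes it correctly.
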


\begin{proposition}
	Let $i\in \{1,\ldots,n\}$ and $j\in \{1,\ldots,k\}$. If $C$ is $(i,j)$-LBMD, then the $(i,j)$-th  lattice rank distribution and binomial moments are fully determined by $n$, $k$, $i$, and~$j$.
\end{proposition}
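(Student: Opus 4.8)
The plan is to observe that the $(i,j)$-LBMD hypothesis is exactly the condition under which the two explicitly determined ranges of Proposition~\ref{prop:Bju} together cover every index $u\in\{0,\ldots,n\}$, so that no binomial moment is left undetermined.

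First I would rewrite the defining inequality $n-\ell_j^{(i)}-\hat\ell_1^{(i)\perp}<0$ as $\ell_j^{(i)}>n-\hat\ell_1^{(i)\perp}$. Proposition~\ref{prop:Bju} gives a closed expression for $B_{j,u}^{(i)}$ whenever $u<\ell_j^{(i)}$ (where it vanishes) and whenever $u>n-\hat\ell_1^{(i)\perp}$. The indices not covered by these two cases are exactly those with $\ell_j^{(i)}\le u\le n-\hat\ell_1^{(i)\perp}$, and this range is empty precisely when $\ell_j^{(i)}>n-\hat\ell_1^{(i)\perp}$, i.e.\ under the LBMD assumption. Hence every $B_{j,u}^{(i)}$ is given by one of the two formulas of Proposition~\ref{prop:Bju}. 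Each such formula involves only $n$, $k$, $i$, $j$ together with the quantity $|\{X\in\mL_i(n,m;q):\dim(X)=u\}|$, which is a Whitney number of the second kind of the lattice and therefore depends only on $i,n,m,q$ and not on $C$. With $m,q$ fixed throughout the section, this shows that the lattice binomial moments are fully determined by $n,k,i,j$.

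Next, for the lattice-rank distribution I would feed these binomial moments into the inversion formula of Theorem~\ref{thm:BuAv}(2), namely $A_{j,v}^{(i)}=\sum_{u=0}^v\mu^{(i)}(u,v)\,B_{j,u}^{(i)}\,f_i(v,u)$. The M\"obius function $\mu^{(i)}$ and the counting function $f_i$ are intrinsic to $\mL_i(n,m;q)$ and hence code-independent, while every $B_{j,u}^{(i)}$ occurring in the sum (those with $u\le v\le n$) was just shown to be determined by $n,k,i,j$. Therefore each $A_{j,v}^{(i)}$ is likewise determined by $n,k,i,j$, and the whole $(i,j)$-th lattice-rank distribution follows, completing the argument.

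The computation here is essentially bookkeeping; the only real content---and hence the step I would be most careful about---is the gap-closing observation that the LBMD inequality is exactly what forces the interval $\ell_j^{(i)}\le u\le n-\hat\ell_1^{(i)\perp}$ to be empty, so that no $u$ escapes the two cases of Proposition~\ref{prop:Bju}. Beyond that, the main thing to make explicit is that the Whitney numbers of the second kind and the lattice constants $\mu^{(i)}$, $f_i$ are invariants of $\mL_i(n,m;q)$ alone and carry no dependence on the particular code $C$, which is precisely what licenses the phrase ``determined by $n,k,i,j$.''
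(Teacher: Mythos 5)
Your argument is correct and is exactly the route the paper intends: the paper states this proposition without proof as a ``straightforward consequence'' of Proposition~\ref{prop:Bju} and Theorem~\ref{thm:BuAv}, and your gap-closing observation that the LBMD inequality $\ell_j^{(i)}>n-\hat\ell_1^{(i)\perp}$ makes the two cases of Proposition~\ref{prop:Bju} cover every $u$, followed by M\"obius inversion via Theorem~\ref{thm:BuAv}(2), is precisely the intended reasoning. Nothing is missing.
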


We now turn to the lattice-analogues of
MTR codes, which were introduced in~\cite{byrne2019tensor}.

\begin{definition}
	We say that $C\leq\Fqmn$ is \textbf{perfect} if $\smash{C=\<0\>_{\Fqm}}$ or if $C$ has a basis made of 
	rank-$1$ vectors. Equivalently, $C$ is perfect if $C\in\mL_1(n,m;q)$.
\end{definition}

 In the linear algebra literature, perfect spaces are often called 
 Frobenius-closed.

\begin{remark}
\label{rem:vtrk}
	The concept of perfect space was first introduced in \cite{atkinson1983ranks}, for matrix spaces, and later in \cite{byrne2021tensor}, for higher order tensors. As observed in \cite[Proposition~14.45]{burgisser1996algebraic} and \cite[Proposition~3.4]{byrne2019tensor}, there is a strong connection between perfect spaces and the tensor rank of a tensor code. More precisely, the tensor rank can be defined as the smallest dimension of a perfect space containing a tensor code. As a consequence, the value of  $\smash{\ell_k^{(1)}}$ can be seen as the vectorial analogue of the tensor rank of a tensor code the bound of part~\ref{item7:propl} of Theorem~\ref{thm:propl} 
	is the corresponding Kruskal-type bound; see \cite{kruskal1977three}. 
\end{remark}

In Remark~\ref{rem:vtrk}, we observed that Property~\ref{item7:propl} of Theorem~\ref{thm:propl} can be regarded as vector analogue of the tensor-rank bound. Matrix rank-metric codes meeting the Kruskal's bound with equality, namely the \textit{MTR codes}, are of great interested in coding theory for their properties of optimality in terms of storage and encoding; see~\cite[Table~1]{byrne2019tensor}. We introduce the vector analogue of this family of codes.

   \begin{definition}
        Let $i\in \{1,\ldots,n\}$. We say that the code $C$ is \textbf{$i$-lattice-optimal} if it attains the bound of part \ref{item7:propl} of Theorem \ref{thm:propl} for $j=k$, i.e., if  
        $$\ell_k^{(i)}=\ell_1^{(i)}+k-1=\left\lceil\frac{d}{i}\right\rceil+k-1.$$
    \end{definition}
 
 In the next result we show that $C$ is $i$-lattice-optimal if and only if $C$ attains the bound of part \ref{item7:propl} of Theorem \ref{thm:propl} for $i$ and for all $j\in\{1,\ldots,k\}$. In particular, the lattice-rank weights of such a $C$ indexed by $i$ are fully determined by $\smash{\ell_1^{(i)}}$.

    \begin{proposition}
    \label{prop:glwLO}
       The following statements are equivalent for all $i\in \{1,\ldots,n\}$.
        \begin{enumerate}[label=(\arabic*)]
            \item $C$ is a $i$-lattice-optimal code.
            \item $\ell_j^{(i)}=\ell_1^{(i)}+j-1$ for all $j\in \{1,\ldots,k\}$.
        \end{enumerate}
    \end{proposition}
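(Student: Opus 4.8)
The plan is to derive the equivalence directly from the monotonicity and lower-bound properties already established in Theorem~\ref{thm:propl}, so that no new construction is needed. The implication (2)$\Rightarrow$(1) is immediate: setting $j=k$ in statement~(2) gives $\ell_k^{(i)}=\ell_1^{(i)}+k-1$, and combining this with part~\ref{item2:propl} of Theorem~\ref{thm:propl} (which identifies $\ell_1^{(i)}=\lceil d/i\rceil$) reproduces exactly the defining equality of $i$-lattice-optimality. Thus the entire content lies in proving (1)$\Rightarrow$(2).

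For that direction, I would first record two inequality chains valid for every $j\in\{1,\ldots,k\}$, both obtained from the strict monotonicity in part~\ref{item4:propl} of Theorem~\ref{thm:propl}. Since $\ell_{s}^{(i)}<\ell_{s+1}^{(i)}$ and all these quantities are integers, each step forces a gap of at least one, i.e.\ $\ell_{s+1}^{(i)}\ge\ell_s^{(i)}+1$. Iterating this from $s=1$ up to $s=j-1$ yields $\ell_j^{(i)}\ge\ell_1^{(i)}+(j-1)$ (equivalently, this is part~\ref{item7:propl} rewritten via part~\ref{item2:propl}), while iterating from $s=j$ up to $s=k-1$ yields $\ell_k^{(i)}\ge\ell_j^{(i)}+(k-j)$.

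Next I would combine these two bounds into a squeeze. Invoking the hypothesis $\ell_k^{(i)}=\ell_1^{(i)}+k-1$ coming from~(1), I obtain
\[
\ell_1^{(i)}+(k-1)=\ell_k^{(i)}\ge \ell_j^{(i)}+(k-j)\ge \bigl(\ell_1^{(i)}+(j-1)\bigr)+(k-j)=\ell_1^{(i)}+(k-1).
\]
Since the extreme terms coincide, every inequality in the chain must be an equality; in particular $\ell_j^{(i)}+(k-j)=\ell_1^{(i)}+(k-1)$, which rearranges to $\ell_j^{(i)}=\ell_1^{(i)}+(j-1)$. As $j$ was arbitrary, statement~(2) follows, and by part~\ref{item2:propl} this is the same as $\ell_j^{(i)}=\lceil d/i\rceil+(j-1)$.

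I do not anticipate a genuine obstacle: the argument is a telescoping/squeeze that relies only on integrality together with parts~\ref{item2:propl}, \ref{item4:propl}, and~\ref{item7:propl} of Theorem~\ref{thm:propl}. The single point worth stating carefully is that a strict inequality between integers guarantees a gap of at least one at each step; it is precisely this that makes the lower bound from strict growth meet the upper bound coming from the optimality hypothesis, pinning down all intermediate weights exactly.
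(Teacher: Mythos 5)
Your proof is correct and follows essentially the same route as the paper: both directions rest on the strict increase $\ell_1^{(i)}<\ell_2^{(i)}<\cdots<\ell_k^{(i)}$ from Theorem~\ref{thm:propl}, combined with integrality, to squeeze the intermediate weights once $\ell_k^{(i)}=\ell_1^{(i)}+k-1$ is assumed. Your write-up merely makes explicit the telescoping step that the paper leaves implicit in its final sentence.
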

    \begin{proof}
        If $\ell_j^{(i)}=\ell_1^{(i)}+j-1$ for all $j\in\{1,\ldots,k\}$ then, in particular, $\ell_k^{(i)}=\ell_1^{(i)}+k-1$, which implies that $C$ is $i$-lattice-optimal. On the other hand, let $C$ be $i$-lattice-optimal. By parts~\ref{item2:propl} and~\ref{item4:propl} of Theorem~\ref{thm:propl} we have an increasing chain
        \begin{equation*}            \ell_1^{(i)}<\ell_2^{(i)}<\cdots<\ell_k^{(i)}=\ell_1^{(i)}+k-1.
        \end{equation*}
        This implies the statement.
    \end{proof}
    
As a consequence of Theorem \ref{thm:propl} and \cite[Corollary~19]{ravagnani2016generalized}, we have that property~\ref{item6:propl} of Theorem~\ref{thm:propl} can be seen as a generalization of the Singleton-type bound for rank-metric codes (Theorem~\ref{thm:singleton}), which can be recovered for $i=1$ and $j=1$. This motivates the following definition.

\begin{definition}
    Let $i\in \{1,\ldots,n\}$ and $j\in \{1,\ldots,k\}$. We say that $C$ is $(i,j)$-\textbf{LMRD} (short for \textbf{lattice maximum rank distance}) if $C$ meets the bound of property~\ref{item6:propl} of Theorem~\ref{thm:propl} with equality, i.e., if
    \begin{equation*}
        \ell_j^{(i)}= n-k+j.
    \end{equation*}
\end{definition}

The following result can be seen as the vector-analogue of \cite[Theorem~4.15]{byrne2020rank}.

\begin{proposition}
\label{prop:MRD11}
    $C$ is MRD if and only if $C$ is $(1,1)$-LBMD.
\end{proposition}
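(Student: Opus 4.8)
The plan is to reduce the statement ``$C$ is $(1,1)$-LBMD'' to a purely numerical relation between the minimum distances of $C$ and of $C^\perp$, and then to recognize that relation as the defining property of MRD codes.

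First I would unwind the definition of $(1,1)$-LBMD, namely $n-\ell_1^{(1)}-\hat\ell_1^{(1)\perp}<0$, i.e.\ $\ell_1^{(1)}+\hat\ell_1^{(1)\perp}>n$. By part~\ref{item2:propl} of Theorem~\ref{thm:propl} we have $\ell_1^{(1)}=\lceil d/1\rceil=d$, the minimum distance of $C$. For the second summand I would exploit that $\mL_1(n,m;q)$ is self-dual: by Remark~\ref{rem:dualLi} the map $X\mapsto X^\perp$ is an involutive bijection of $\mL_1(n,m;q)$. Using $C^\perp\cap X^\perp=(C+X)^\perp$ and letting $X$ range over $\mL_1(n,m;q)$, the self-duality turns the codimension minimization defining $\hat\ell_1^{(1)\perp}=\hat\ell_1^{(1)}(C^\perp)$ into an honest minimization of $\dim(W)$ over $W\in\mL_1(n,m;q)$ with $W\cap C^\perp\neq\{0\}$; this is exactly $\ell_1^{(1)}(C^\perp)=d(C^\perp)=:d^\perp$ by part~\ref{item2:propl} of Theorem~\ref{thm:propl} applied to $C^\perp$. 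Hence $C$ is $(1,1)$-LBMD if and only if $d+d^\perp>n$.

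It then remains to prove that $C$ is MRD if and only if $d+d^\perp>n$. Here I would apply the rank-metric Singleton bound (Theorem~\ref{thm:singleton}) to $C$ and to $C^\perp$, the latter of dimension $n-k$; in the normalization $m\geq n$ these read $d\leq n-k+1$ and $d^\perp\leq k+1$, so that $d+d^\perp\leq n+2$ always, with equality precisely when both bounds are tight, i.e.\ precisely when both $C$ and $C^\perp$ are MRD. To exclude the intermediate value $d+d^\perp=n+1$ I would invoke that a rank-metric code is MRD if and only if its dual is MRD (Delsarte~\cite{delsarte1978bilinear}); this forces the two Singleton bounds to be tight or slack simultaneously, so $d+d^\perp$ is either $\leq n$ or equal to $n+2$, and never $n+1$. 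Consequently $d+d^\perp>n$ is equivalent to $d+d^\perp=n+2$, which is equivalent to $C$ being MRD.

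The main obstacle I anticipate is the clean evaluation $\hat\ell_1^{(1)\perp}=d^\perp$: one must genuinely use the self-duality of $\mL_1(n,m;q)$ to pass from the dual lattice-rank weight to $\ell_1^{(1)}(C^\perp)$ before Theorem~\ref{thm:propl} can be applied, since without the involution $X\mapsto X^\perp$ the minimization is not of the right shape. A secondary point requiring care is the bookkeeping with the Singleton bound that rules out $d+d^\perp=n+1$; the argument is routine once one states explicitly that MRD is preserved under duality, as this is exactly what prevents a code and its dual from straddling the bound.
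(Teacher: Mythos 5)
Your proof is correct and follows essentially the same route as the paper's: both arguments reduce the $(1,1)$-LBMD condition to the inequality $d+d^\perp>n$ via the self-duality of $\mL_1(n,m;q)$ (Remark~\ref{rem:dualLi}) together with part~(2) of Theorem~\ref{thm:propl}, and then identify $d+d^\perp>n$ with $d+d^\perp=n+2$, i.e.\ with $C$ being MRD. The only (minor) difference is that the paper delegates this last equivalence to \cite[Corollary~18]{de2018weight}, whereas you derive it directly from the Singleton bound applied to $C$ and $C^\perp$ plus the fact that MRD is preserved under duality, which rules out the value $d+d^\perp=n+1$; this makes the step self-contained but is not a structurally different proof.
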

\begin{proof}
    Recall that by Remark~\ref{rem:dualLi} we have that if $\{X^\perp:X\in\mL_1(n,m;q) \}=\mL_1(n,m;q)$. If $C$ is MRD then~$C^\perp$ is MRD as well~\cite{delsarte1978bilinear} and we have 
    \begin{equation*}
        n-d-d^\perp=n-(n-k+1)-(k+1)=-2<0,
    \end{equation*}
    which implies that $C$ is $(1,1)$-LBMD. On the other hand, if $C$ is $(1,1)$-LBMD then we have $n<d+d^\perp$. Therefore, we must have $d+d^\perp=n+2$ and $C$ is MRD, for example, by \cite[Corollary~18]{de2018weight}.
\end{proof}

We conclude this work with an example that illustrates some notion of optimality defined in this section.

\begin{example}
    Let $q=2$ and $n=m=4$. Consider  $C:=\<(1,0,\alpha^2,\alpha^7),(0,1,\alpha^6,\alpha^2)\>$, where the span is over $\F_{2^4}$ and $\alpha$ is a primitive element of $\F_{2^4}$ with $\alpha^4+\alpha+1=0$.
    We have  $C^\perp=\<(1,0,\alpha^6,\alpha^{10}),(0,1,\alpha^{11},\alpha^6)\>$. Recall that by Remark~\ref{rem:dualLi} we have that $\{X^\perp:X\in\mL_1(n,m;q)\}=\mL_1(n,m;q)$. The following hold.
    \begin{enumerate}[label=(\arabic*)]
        \item $\ell_1^{(1)}(C)=\ell_1^{(1)}(C^\perp)=3$, which implies that  $d(C)=d(C^\perp)=3$ by Theorem~\ref{thm:propl}. This value is attained, for example, by the element $\<(1,0,1,0),(0,1,0,0),(0,0,0,1)\>$ of $\mL_1(4,4;2)$, whose intersections with $C$ and $C^\perp$ are the span of $(1,\alpha^2,1,\alpha^3)$ and $(1,\alpha^2,1,\alpha)$, respectively.
        \item $\ell_2^{(1)}(C)=4$. This means that the only element of $\mL_1(4,4;2)$ containing $C$ is the full space $\smash{\F_{2^4}^4}$.
    \end{enumerate}
    Therefore, we have
    \begin{align*}
         4-\ell_1^{(1)}(C)-\ell_1^{(1)}(C^\perp)=-2<0, \qquad
        4-\ell_2^{(1)}(C)-\ell_1^{(1)}(C^\perp)=-3<0.
    \end{align*}
    It follows that $C$ is $(1,1)$-LBMD and $(1,2)$-LBMD. Moreover, one can easily check that $C$ is also MRD. Note that this is in line with Proposition~\ref{prop:MRD11}. 
\end{example}

\bigskip

\bibliographystyle{abbrv} 
\bibliography{latticebiblio.bib}
\end{document}